\theoremstyle{plain}
\newtheorem{thm}{Theorem}[section]
\newtheorem{pro}[thm]{Proposition}
\newtheorem{lem}[thm]{Lemma}
\newtheorem{cor}[thm]{Corollary}
\theoremstyle{definition} %Title and number in bold, body in normal font.
\newtheorem{exa}[thm]{Example}
\newtheorem{rem}[thm]{Remark}
\newtheorem{lem-defn}[thm]{Lemma-Definition}
\renewcommand{\qed}{\begin{flushright} {\bf Q.E.D.}\ \ \ \ \
                  \end{flushright} }
\newcommand{\hs}{\hspace{.2in}}
\newcommand{\IC}{\mathbb{C}}
\newcommand{\IF}{\mathbb{F}}
\newcommand{\IN}{\mathbb{N}}
\newcommand{\XX}{\mathfrak{X}}
\DeclareMathAlphabet{\mathbbold}{U}{bbold}{m}{n}
\def \g  {\mathfrak{a}}
\def \b  {\mathfrak{b}}
\def \d  {\mathfrak{d}}
\def \g  {\mathfrak{g}}
\def \h  {\mathfrak{h}}
\def \n  {\mathfrak{n}}
\def \bfu  {\mathbf{u}}
\def \bfv  {\mathbf{v}}
\def \bfw  {\mathbf{w}}
\def \G  {{\mathcal{G}}}
\def \K  {{\mathcal{K}}}
\def \L  {{\mathcal{L}}}
\def \O  {{\mathcal{O}}}
\def \R  {{\mathcal{R}}}
\def \Y  {{\mathcal{Y}}}
\def \Z  {{\mathcal{Z}}}
\DeclareMathOperator{\im}{Im}
\DeclareMathOperator{\Ad}{Ad}
\DeclareMathOperator{\diag}{diag}
\def \sB {{\scriptscriptstyle B}}
\def \sD {{\scriptscriptstyle D}}
\def \sF {{\scriptscriptstyle F}}
\def \sG {{\scriptscriptstyle G}}
\def \sK {{\scriptscriptstyle K}}
\def \sL {{\scriptscriptstyle L}}
\def \sY {{\scriptscriptstyle Y}}
\def \sZ {{\scriptscriptstyle Z}}
\def \sR {{\scriptscriptstyle R}}
\def \sS {{\scriptscriptstyle S}}
\def \sGs {{\scriptscriptstyle G^*}}
\def \sGam {{\scriptscriptstyle \Gamma}}
\def \ssL {{\scriptscriptstyle \L}}
\def \ssY {{\scriptscriptstyle \Y}}
\def \ssZ {{\scriptscriptstyle \Z}}
\newcommand{\del}{\partial}
\newcommand{\bbfu}{\bar{\bfu}}
\newcommand{\bbfv}{\bar{\bfv}}
\newcommand{\bbfw}{\bar{\bfw}}
\newcommand{\la}{\langle}
\newcommand{\ra}{\rangle}
\newcommand{\lara}{\la \, , \, \ra}
\newcommand{\gog}{\g \oplus \g}
\newcommand{\piD}{{\pi_{\scriptscriptstyle D}}}
\newcommand{\piG}{{\pi_{\scriptscriptstyle G}}}
\newcommand{\piGs}{{\pi_{\scriptscriptstyle G^*}}}
\newcommand{\piP}{{\pi_{\scriptscriptstyle P}}}
\newcommand{\pist}{\pi_{\rm st}}
\newcommand{\Pist}{\Pi_{\rm st}}
\newcommand{\piX}{{\pi_{\scriptscriptstyle X}}}
\newcommand \piY {{\pi_{\scriptscriptstyle Y}}}
\newcommand \piZ {{\pi_{\scriptscriptstyle Z}}}
\begin{document}

\setlength{\baselineskip}{1.2\baselineskip}

%%%%%%%%%%%%%%%%%%%%%%    Title    %%%%%%%%%%%%%%%%%%%%%%%%
\title[Poisson groupoids and generalised double Bruhat cells]{Local Poisson groupoids over mixed product Poisson structures and generalised double Bruhat cells}
\author{Victor Mouquin}
\address{
School of Mathematical Sciences   \\
Shanghai Jiaotong University \\
Shanghai, China}               
\email{mouquinv@sjtu.edu.cn}
\date{}

\begin{abstract}
Given a standard complex semisimple Poisson Lie group $(G, \pist)$, generalised double Bruhat cells $G^{\bfu, \bfv}$ and generalised Bruhat cells $\O^{\bfu}$ equipped with naturally defined holomorphic Poisson structures, where $\bfu, \bfv$ are finite sequences of Weyl group elements, were defined and studied by Jiang-Hua Lu and the author. We prove in this paper that $G^{\bfu, \bfu}$ is naturally a Poisson groupoid over $\O^{\bfu}$, extending a result from the aforementioned authors about double Bruhat cells in $(G, \pist)$. 

Our result on $G^{\bfu, \bfu}$ is obtained as an application of a construction interesting in its own right, of a local Poisson groupoid over a mixed product Poisson structure associated to the action of a pair of Lie bialgebras. This construction involves using a local Lagrangian bisection in a double symplectic groupoid closely related to the global $\R$-matrix studied by Weinstein and Xu, to ``twist" a direct product of Poisson groupoids. 
\end{abstract}

\maketitle

%\tableofcontents

\section{Introduction} \label{sec-intro}

Let $G$ be a connected complex semisimple Lie group and $\pist$ the standard holomorphic multiplicative Poisson structure on $G$ determined by a pair $(B, B_-)$ of opposite Borel subgroups and a symmetric non-degenerate ad-invariant bilinear form on the Lie algebra of $G$. It is well known \cite{hodges, reshe-4} that $\pist$ is invariant under left and right translation by the maximal torus $T = B \cap B_-$ , and that the $T$-orbits of symplectic leaves are the double Bruhat cells 
$$
G^{u,v} = BuB \cap B_- vB_-,
$$
where $u, v$ are elements of the Weyl group $W$ of $(G, T)$. The Poisson structure $\pist$ descends to a well defined Poisson structure $\pi_1$ on the flag variety $G/B$, and any Bruhat cell 
$$
\O^u = BuB/B
$$ 
is a Poisson submanifold of $(G/B, \pi_1)$, see e.g \cite{Goodearl-Yakimov:GP}. A surprising fact proven in \cite{Lu-Mou:double-B-cell} is that for any $u \in W$, $G^{u,u}$ has a natural groupoid structure compatible with $\pist$, making $(G^{u, u}, \pist)$ into a Poisson groupoid over $(\O^u, \pi_1)$, and that the symplectic leaf in $G^{u,u}$ containing the identity bisection is a symplectic groupoid. 

In \cite{Lu-Mou:mixed, Lu-victor:Tleaves}, natural generalisations of (double) Bruhat cells, associated to finite sequences of Weyl group elements, are constructed. If $\bfu = (u_1, \ldots, u_n) \in W^n$, one has the {\it generalised Bruhat cell}
$$
\O^{\bfu} = Bu_1B \times_\sB \cdots  \times_\sB Bu_nB/B,
$$
where our notation is explained in $\S$\ref{nota-quotient}, and the spaces 
$$
B \bfu B = Bu_1B \times_\sB \cdots  \times_\sB Bu_nB \subset \tilde{F}_n, \hs B_- \bfu B_- = B_-u_1B_- \times_{\sB_-} \cdots  \times_{\sB_-} B_-u_nB_- \subset \tilde{F}_{-n}, 
$$
where $\tilde{F}_n$, $\tilde{F}_{-n}$ are defined in $\S$\ref{subsec-F_n}. If $\bfv = (v_1, \ldots, v_n) \in W^n$ is another finite sequence, one has the {\it generalised double Bruhat cell} 
$$
G^{\bfu, \bfv} = \{ \left( [g_1, \ldots, g_n]_{\scriptscriptstyle \tilde{F}_n}, \; [h_1, \ldots, h_n]_{\scriptscriptstyle \tilde{F}_{-n}} \right) \in B \bfu B \times B_- \bfv B_-  : \; g_1 \cdots g_n = h_1 \cdots h_n \},
$$
and $\pist$ induces the holomorphic Poisson structures $\pi_n$ on $\O^{\bfu}$ and $\tilde{\pi}_{n,n}$  on $G^{\bfu, \bfv}$.  When $n=1$, $(G^{\bfu, \bfv}, \tilde{\pi}_{n,n})$ and $(\O^{\bfu}, \pi_n)$ are naturally isomorphic to $(G^{u_1, v_1}, \pist)$ and $(\O^{u_1}, \pi_1)$. 

One of the central theorem of this paper, Theorem \ref{thm-main-Guu}, is the generalisation of the results in \cite{Lu-Mou:double-B-cell}: for any $l \geq 1$ and $\bfw \in W^l$, $G^{\bfw, \bfw}$ has a natural groupoid structure compatible with $\tilde{\pi}_{l,l}$, giving rise to a Poisson groupoid $(\G^{\bbfw, \bbfw}, \pi_{\bbfw, \bbfw})$ over $(\O^{\bfw}, \pi_l)$. The groupoid structure depends on the choice $\bbfw \in N_\sG(T)^l$ of a representative of $\bfw$, where $N_\sG(T)$ is the normaliser subgroup in $G$ of $T$, but the isomorphism class of Poisson groupoids is independent of such a choice.

While the results in \cite{Lu-Mou:double-B-cell} were obtained by embedding $(G^{u,u}, \pist)$ into a bigger Poisson groupoid whose underlying groupoid structure is that of an action groupoid, the same method does not work in the generalised double Bruhat cell setting. Instead, another main result of this paper is a construction of local Poisson groupoids over mixed product Poisson structures, and we obtain Theorem \ref{thm-main-Guu} as an application of this construction.

\subsection{Local Poisson groupoids over mixed product Poisson structures} \label{intro-subsec1}

Let $(Z, \piZ)$ be a Poisson manifold with a left Poisson action $\lambda: \g \to \XX^1(Z)$ of a Lie bialgebra $(\g, \delta_\g)$, and let $(Y, \piY)$ be a Poisson manifold with a right Poisson action $\rho: \g^* \to \XX^1(Y)$ of the dual Lie bialgebra $(\g^*, \delta_{\g^*})$. Then $(\rho, \lambda)$ defines the so-called {\it mixed product Poisson structure} 
$$
\piY \times_{(\rho, \lambda)} \piZ : = (\piY, \: \piZ) - (\rho(\xi^i), \: 0) \wedge (0, \: \lambda(x_i)) \in \XX^2(Y \times Z) 
$$
on $Y \times Z$, where $(x_i)$ is a basis of $\g$, $(\xi^i)$ the dual basis of $\g^*$, and where here and throughout the paper, we adopt the Einstein summation convention. Mixed product Poisson structures associated to pairs of actions of Lie bialgebras were first studied in \cite{Lu-Mou:mixed}. Let $(\Y \rightrightarrows Y, \pi_\ssY)$, $(\Z \rightrightarrows Z, \pi_\ssZ)$ be Poisson groupoids respectively over $(Y, \piY)$ and $(Z, \piZ)$, and let 
$$
\mu_\ssY: (\Y, \pi_\ssY) \to (G, \piG), \hs \mu_\ssZ: (\Z, \pi_\ssZ) \to (G^*, -\piGs), 
$$
be Poisson groupoid morphisms, where $(G, \piG)$, $(G^*, \piGs)$ are Poisson Lie groups with respective Lie bialgebras $(\g, \delta_\g)$, $(\g^*, \delta_{\g^*})$, and assume that the {\it dressing actions} 
\begin{align*}
\varrho_\ssY: \g^* \to \XX^1(\Y), \hs  & \varrho_\ssY(\xi) = \pi_\ssY^\sharp(\mu_\ssY^*\xi^L), \hs \xi \in \g^*,   \\
\vartheta_\ssZ: \g \to \XX^1(\Z), \hs & \vartheta_\ssZ(x) = \pi_\ssZ^\sharp(\mu_\ssZ^* x^R), \hs x \in \g,
\end{align*}
restrict to respectively $\rho$ and $\lambda$ on the identity bisections of $\Y$ and $\Z$. When $\Y$, $\Z$ are source simply connected symplectic groupoids, $\mu_\ssY$, $\mu_\ssZ$ are the Poisson groupoid morphisms integrating the Lie bialgebroid morphisms $\rho^*: T^*Y \to \g$, $\lambda^*: T^*Z \to \g^*$, see Remark \ref {rem-ssc-gpoid}. Associated to the pair $((G, \piG), (G^*, \piGs))$, one has the {\it double symplectic groupoid} $(\Gamma, \pi_\sGam)$ constructed by Lu in \cite{lu:thesis} (see $\S$\ref{subsec-dble-gpoid} for details). That is, $(\Gamma, \pi_\sGam)$ is both a symplectic groupoid over $(G, \piG)$ and $(G^*, \piGs)$, and write $\Gamma_\sG$, $\Gamma_\sGs$ for $\Gamma$ thought of as a groupoid over $G$ and $G^*$ respectively. Given respective left Poisson actions $\rhd_\sG$, $\rhd_\sGs$ of $(\Gamma_\sG, \pi_\sGam)$ on $(\Y, \pi_\ssY)$ and of $(\Gamma_\sGs, -\pi_\sGam)$ on $(\Z, \pi_\ssZ)$, with respective moment maps $\mu_\ssY$, $\mu_\ssZ$, one can via a local Lagrangian bisection $\L$ of $ (\Gamma_\sGs \times \Gamma_\sG , (-\pi_\sGam) \times \pi_\sGam)$ ``twist" the multiplication on the direct product Poisson groupoid 
$$
(\Y \rightrightarrows Y, \pi_\ssY) \times (\Y \rightrightarrows Z, \pi_\ssZ)
$$
to obtain a local Poisson groupoid over $(Y \times Z, \:  \piY \times_{(\rho, \lambda)} \piZ)$. More precisely, 
\begin{equation} \label{intro-L}
\L = (O_\sGam)_{\diag} \subset \Gamma^2
\end{equation}
is the diagonal copy of a particular open subset $O_\sGam$ of $\Gamma$, and 
\begin{equation} \label{intro-mult}
(\tilde{y}_1, \tilde{z}_1) \cdot (\tilde{y}_2, \tilde{z}_2) = (\tilde{y}_1(\gamma \rhd_\sG \tilde{y}_2), \: (\gamma \rhd_\sGs \tilde{z}_1)\tilde{z}_2), \hs \hs \tilde{y}_i \in \Y, \; \tilde{z}_i \in \Z, 
\end{equation}
where $(\gamma, \gamma) \in \L$ and $(\tilde{y}_1, \: \gamma \rhd_\sG \tilde{y}_2)$, respectively $(\gamma \rhd_\sGs \tilde{z}_1, \: \tilde{z}_2)$ are composable pairs in $\Y$ and $\Z$, defines a local groupoid multiplication on $\Y \times \Z$ which is compatible with the direct product Poisson structure $\pi_\ssY \times \pi_\ssZ$, and such that $(\Y \times_\ssL \Z, \pi_\ssY \times \pi_\ssZ)$ is a Poisson groupoid over $(Y \times Z, \: \piY \times_{(\rho, \lambda)} \piZ)$, where $\Y \times_\ssL \Z$ denotes $\Y \times \Z$ equipped with the local groupoid multiplication in \eqref{intro-mult}. 

The Lagrangian bisection $\L$ is closely related to the {\it global $\R$-matrix} of the Drinfeld double $(D, \piD)$ of $(G, \piG)$ constructed by Weinstein and Xu in \cite{wein-xu}. These $\R$-matrices are Lagrangian submanifolds in the cartesian square of double symplectic groupoids of quasitriangular Poisson Lie groups, which satisfy a classical analogue of the quantum Yang-Baxter equation. We show that under an appropriate isomorphism, the global $\R$-matrix of $(D, \piD)$ is essentially the cartesian product in $\Gamma^4$ of $\L$ with the identity bisections in $\Gamma_\sG$ and $\Gamma_\sGs$.

The groupoid multiplication in \eqref{intro-mult} is an analogue of a construction in quantum algebra appearing in \cite{mou:pas}, which was used to quantize mixed product Poisson structures. If $H$ is a Hopf algebra with a quasitriangular $R$-matrix $R \in H \otimes H$ and if $A$, $B$ are $H$-module algebras, then 
\begin{equation} \label{intro-R}
(a_1 \otimes b_1) \cdot (a_2 \otimes b_2) = a_1(Y^i a_2) \otimes (X_ib_1)b_2, \hs a_j \in A, \; b_j \in B,
\end{equation}
where $R = X_i \otimes Y^i$, defines an associative multiplication, called in \cite{mou:pas} a ``twist" of the tensor product algebra $A \otimes B$. One observes the analogous role played by $\L$ and $R$ in \eqref{intro-mult} and \eqref{intro-R}.

\subsection{Actions of double symplectic groupoids on generalised double Bruhat cells}

Returning to a connected complex semisimple Lie group $G$ with the standard multiplicative Poisson structure $\pist$, one has the pair $((B, \pist), (B_-, -\pist))$ of dual Poisson Lie groups, and let $(\Gamma, \pi_\sGam)$ be its associated double symplectic groupoid. For any $\bfu, \bfv \in W^n$, one has Poisson maps 
$$
\mu_+: (G^{\bfu, \bfv}, \tilde{\pi}_{n,n}) \to (B, \pist), \hs  \mu_-: (G^{\bfu, \bfv}, \tilde{\pi}_{n,n}) \to (B_-, \pist), 
$$
see $\S$\ref{sec-lhd-BB_-} a precise definition, and when $\bfu = \bfv$, both $\mu_+: \G^{\bbfu, \bbfu} \to B$, $\mu_-: \G^{\bbfu, \bbfu} \to B_-$ are groupoid morphisms. A third main result of this paper, Theorem \ref{thm-lhd-BB_-}, is that $G^{\bfu, \bfv}$ admits a Poisson action of both symplectic groupoids $(\Gamma_\sB, \pi_\sGam)$ and $(\Gamma_{\sB_-}, -\pi_\sGam)$ with respective moment maps $\mu_+$ and $\mu_-$.

We can now explain how the proof of Theorem \ref{thm-main-Guu} proceeds. Let $\bfu \in W^n$ and $\bfv \in W^m$.  Arguing by induction on $n$ and $m$, one can assume that both $(\G^{\bbfu, \bbfu},\pi_{\bbfu, \bbfu})$ and $(\G^{\bbfv, \bbfv}, \pi_{\bbfv, \bbfv})$ are Poisson groupoids, and apply the theory described in $\S$\ref{intro-subsec1} to obtain a local Poisson groupoid
$$
(\G^{\bbfu, \bbfu} \times_\ssL \G^{\bbfv, \bbfv}, \; \pi_{\bbfu, \bbfu} \times \pi_{\bbfv, \bbfv}). 
$$
In fact, we show that a quotient of this local Poisson groupoid lies in $(\G^{\bbfw, \bbfw}, \pi_{\bbfw, \bbfw})$ as a Zariski open subset, where $\bfw = (\bfu, \bfv)$ and $l = n+m$, and it follows by continuity that $(\G^{\bbfw, \bbfw}, \pi_{\bbfw, \bbfw})$ is a Poisson groupoid. 

Finally, a word about what is not in this paper. We do not prove that the symplectic leaves in $(G^{\bfw, \bfw}, \tilde{\pi}_{l,l})$ inherit a structure of a symplectic groupoid. One would first need to have a description of these symplectic leaves, generalising the description in \cite{k-z:leaves, Lu-Mou:double-B-cell} of the symplectic leaves in the standard complex semisimple Poisson Lie group $(G, \pist)$. We plan to address this issue in a subsequent paper.

\subsection*{}

The paper is organised as follows. Section $\S$\ref{sec-PL-bialg} is a brief recall on the theory of Lie bialgebras and Poisson Lie groups, and in $\S$\ref{sec-action-double} we recall from \cite{lu:thesis} the notion of a double symplectic groupoid associated to a pair of dual Poisson Lie groups. In particular, we adapt the criteria appearing in \cite{Liu-Wei-Xu:dirac} for a Lie groupoid action of a Poisson groupoid to be Poisson to the case of a double symplectic groupoid. In $\S$\ref{sec-lag-PLgps}, we discuss the theory of global $\R$-matrices developed by Weinstein and Xu in \cite{wein-xu}. We show that for the Drinfeld double $(D, \piD)$ of a pair $((G, \piG), (G^*, \piGs))$ of dual Poisson Lie groups, the global $\R$-matrix is, under an appropriate isomorphism, the cartesian product in $\Gamma^4$ of $\L$ and the identity bisections in $\Gamma_\sG$ and $\Gamma_\sGs$, where $\L$ is as in \eqref{intro-L}. Section $\S$\ref{sec-local-p-gpoid} is where the first main result of this paper, Theorem \ref{main-thm-gpoid}, appears. It is a construction of a local Poisson groupoid over a mixed product Poisson structure as explained in $\S$\ref{intro-subsec1}. Section $\S$\ref{sec-std-PL} is about the pair $((B, \pist), (B_-, -\pist))$ of dual Poisson Lie groups associated to a standard complex semisimple Poisson Lie group $(G, \pist)$, and in $\S$\ref{sec-dble-bruhat}, we recall the construction of generalised (double) Bruhat cells from \cite{Lu-victor:Tleaves}. We prove in $\S$\ref{sec-lhd-BB_-} the second main result of this paper: every generalised double Bruhat cell admits a Poisson action by the symplectic groupoids of both $(B, \pist)$ and $(B_-, -\pist)$. In $\S$\ref{sec-main-thm-Gww},  we prove last main result of this paper, that every generalised double Bruhat cell $(\G^{\bbfw, \bbfw}, \pi_{\bbfw, \bbfw})$ is naturally a Poisson groupoid over $(\O^{\bfw}, \pi_l)$.

\subsection{Notation} \label{subsec-nota}

\subsubsection{}

By manifold, we mean either a real smooth or a complex manifold. Maps between manifolds and tensor fields on manifolds are understood to be either smooth or holomorphic. By diffeomorphism, we means either $C^\infty$ diffeomorphism or holomorphic diffeomorphism.  

\subsubsection{} \label{nota-quotient}
If $G$ is a group and $Q_0, Q_1, \ldots, Q_n$ subgroups of $G$, we will denote by 
$$
Q_0 \backslash G \times_{\scriptscriptstyle Q_1} G \times_{\scriptscriptstyle Q_2} \cdots  \times_{\scriptscriptstyle Q_{n-1}} G/Q_n 
$$
the quotient of $G^n$ be the right action of $Q_0 \times \cdots \times Q_n$ given by 
$$
(g_1, g_2, \ldots, g_n) \cdot (q_0, q_1, \ldots, q_n) = (q_0^{-1}g_1q_1, q_1^{-1}g_2q_2, \ldots, q_{n-1}^{-1}g_nq_n), \hs g_j \in G, \; q_j \in Q_j,
$$
and if $Q_0 \backslash G \times_{\scriptscriptstyle Q_1} \cdots  \times_{\scriptscriptstyle Q_{n-1}} G/Q_n$ is denoted by $Z$, we will denote the quotient map by $\varpi_\sZ: G^n \to Z$ and elements of $Z$ by 
$$
[g_1, \ldots, g_n]_\sZ = \varpi_\sZ(g_1, \ldots, g_n), \hs g_j \in G. 
$$
If $S_1, \ldots, S_n$ are subsets of $G$ such that $S_j$ is left $Q_{j-1}$-invariant and right $Q_j$-invariant for $1 \leq j \leq n$, let 
$$
Q_0 \backslash S_1 \times_{\scriptscriptstyle Q_1} S_2 \times_{\scriptscriptstyle Q_2} \cdots  \times_{\scriptscriptstyle Q_{n-1}}S_n/Q_n = \varpi_\sZ(S_1 \times \cdots \times S_n) \subset Z. 
$$
If $Q_0 = \{e\}$ we denote $Z$ by $G \times_{\scriptscriptstyle Q_1} \cdots  \times_{\scriptscriptstyle Q_{n-1}} G/Q_n$, and if $Q_n = \{e \}$, we denote $Z$ by $Q_0 \backslash G \times_{\scriptscriptstyle Q_1} \cdots  \times_{\scriptscriptstyle Q_{n-1}} G$. 

\subsubsection{}

If $G$ is a real or complex Lie group with Lie algebra $\g$, we denote by $l_g, r_g: G \to G$ respectively the left and right multiplication by $g \in G$, and for $x \in (\otimes \g) \oplus (\otimes \g^*)$, we denote respectively by $x^L$ and $x^R$ the left and right invariant tensor field on $G$ whose value at the identity $e \in G$ is $x$. If $\lambda: G \times M \to M$ is a left action of $G$ on a manifold $M$, we use the same symbol to denote the left Lie algebra action $\lambda: \g \to \XX^1(M)$, defined by 
$$
\lambda(x)(m) = \frac{d}{dt} \!\mid_{t=0} \lambda(\exp(tx), m), \hs x \in \g, \; m \in M. 
$$
Similarly, if $\rho: M \times G \to M$ is a right action, let $\rho: \g \to \XX^1(M)$ be defined as $\rho(x)(m) =  d/dt \mid_{t=0} \rho(m, \exp(tx))$, $x \in \g$, $m \in M$.

\subsection*{Acknowledgements}

The author would like to thank Jiang-Hua Lu for stimulating discussions, as well as Rui Loja Fernandes, Ioan Marcut, and Travis Li Songhao for their helpful explanations. 

%%%%%%%%%%%%%%%%%%%%%%%%%%%%%%%%%%%%%%%

\section{Poisson Lie groups and Lie bialgebras} \label{sec-PL-bialg}

We recall in this section basic facts about Poisson Lie groups and Lie bialgebras, and refer to \cite[Section 2]{Lu-Mou:mixed} for additional details. 

\subsection{Poisson Lie groups and Lie bialgebras}

Let $\g$ be a finite dimensional, real or complex Lie algebra. A {\it Lie bialgebra structure on $\g$} is a map $\delta_\g: \g \to \wedge^2 \g$ whose dual map $\delta_\g^*: \wedge^2 \g^* \to \g^*$ is a Lie bracket on $\g^*$, and which satisfies the cocycle condition 
$$
\delta_\g[x, y] = [x, \delta_\g(y)] + [\delta_\g(x), y], \hs x, y \in \g, 
$$
and one says that $(\g, \delta_\g)$ is a {\it Lie bialgebra}. Then $(\g^*, \delta_{\g^*})$ is also a Lie bialgebra, called the {\it dual Lie bialgebra} of $(\g, \delta_\g)$, where $\g^*$ is equipped with the Lie bracket $\delta_\g^*$ and $\delta_{\g^*}: \g^* \to \wedge^2 \g^*$ is the dual of the Lie bracket on $\g$. Equip $\d = \g \oplus \g^*$ with the symmetric non-degenerate bilinear form
\begin{equation} \label{eq-lara_d}
\la x + \xi, y + \eta \ra_\d = \la x, \eta \ra + \la y, \xi \ra, \hs x, y \in \g, \; \xi, \eta \in \g^*.
\end{equation}
There is a unique Lie bracket \cite[Formula (2.2)]{Lu-Mou:mixed} on $\d$ such that $\g$, $\g^*$ are Lie subalgebras and such that $\lara_\d$ is ad-invariant, and one says that $(\d, \lara_\d)$ is the {\it double Lie algebra} of $(\g, \delta_\g)$. Equivalently, a {\it Manin triple} $((\d, \lara_\d), \g, \g')$ consists of a quadratic Lie algebra $(\d, \lara_\d)$, and two Lagrangian subalgebras $\g$, $\g'$ of $(\d, \lara_\d)$ such that $\d = \g \oplus \g'$ as a vector space. Identifying $\g$ and $\g'$ as dual vector spaces via $\lara_\d$, one obtains Lie bialgebra structures $\delta_\g: \g \to \wedge^2 \g$ and $\delta_{\g'}: \g' \to \wedge^2 \g'$, respectively the dual of of the Lie bracket on $\g'$ and $\g$, and one says that $((\g, \delta_\g), (\g', \delta_{\g'}))$ is a {\it pair of dual Lie bialgebras}, with double Lie algebra $(\d, \lara_\d)$. The element 
\begin{equation} \label{eq-r-matrix} 
\Lambda_{\g, \g'} =  x_i \wedge \xi^i \in \d \otimes \d,
\end{equation}
where $(x_i)$ is any basis of $\g$ and $(\xi^i)$ the dual basis of $\g'$ with respect to $\lara_\d$, is called the {\it skew-symmetric $r$-matrix associated to the Lagrangian splitting $\d = \g + \g'$}, and  
$$
\delta_\d: \d \to \wedge^2 \d,  \hs \delta_\d(a) = [a, \Lambda_{\g, \g'}], \hs a \in \d,
$$
is a Lie bialgebra structure on $\d$ such that both $(\g, \delta_\g)$ and $(\g^*, -\delta_{\g^*})$ are sub- Lie bialgebras of $(\d, \delta_\d)$. One says that $(\d, \delta_\d)$ is the {\it double Lie bialgebra of $(\g, \delta_\g)$}. In particular, let $(\g, \delta_\g)$ be any Lie bialgebra with dual Lie bialgebra $(\g^*, \delta_{\g^*})$ and double Lie bialgebra $(\d, \delta_\d)$. Equip the direct product Lie algebra $\d \oplus \d$ with the bilinear form
$$
\la (a_1, b_1), (a_2, b_2) \ra_{\d \oplus \d} = \la a_1, a_2 \ra_\d - \la b_1, b_2 \ra_\d, \hs a_i, b_i \in \d,
$$
and let $\d_{diag} \subset \d \oplus \d$ be the diagonal subalgebra and $\d' =  \g^* \oplus \g \subset \d \oplus \d$. Then $((\d \oplus \d, \lara_{\d \oplus \d}), \d_{diag}, \d')$ is a Manin triple, and the skew-symmetric $r$-matrix associated to the Lagrangian splitting $\d \oplus \d = \d_{\diag} + \d'$ is 
\begin{equation} \label{eq-r^(2)}
\Lambda_{\g, \g'}^{(2)} = (\Lambda_{\g, \g'}, \Lambda_{\g, \g'}) - \Lambda \in (\d \oplus \d) \wedge (\d \oplus \d),
\end{equation}
where 
\begin{equation} \label{eq-t}
\Lambda = (\xi^i, 0) \wedge (0, x_i) \in \wedge^2 \d'. 
\end{equation}
One has $(\d, \delta_\d) \cong (\d_{\diag}, \delta_{\d_{\diag}})$ under the isomorphism $a \mapsto (a, a)$, $a \in \d$, thus $(\d', \delta_{\d'})$ is isomorphic to the dual Lie bialgebra of $(\d, \delta_\d)$. 

A {\it multiplicative} Poisson structure on a real or complex Lie group $G$ is a smooth or holomorphic Poisson bivector field $\piG$ on $G$ such that the multiplication map $G \times G \to G$ is Poisson, when $G \times G$ is equipped with $\piG \times \piG$, and one also says that $(G, \piG)$ is a {\it Poisson Lie group}. Equivalently, $\piG$ is multiplicative if it satisfies 
$$
l_g \piG(h)  + r_h \piG(g) = \piG(gh), \hs g, h \in G. 
$$
Let $\g$ be the Lie algebra of $G$. Then $\delta_\g(x) = [x^R, \piG](e)$, $x \in \g$, is a Lie bialgebra structure on $\g$, and one says that $(\g, \delta_\g)$ is the {\it Lie bialgebra of $(G, \piG)$}. The adjoint action of $\g$ on $\d$ integrates to an action of $G$ on $\d$, given by 
\begin{equation} \label{eq-Ad}
\Ad_g(x+ \xi) = \Ad_gx + r_{g^{-1}}\piG^\sharp(\xi^L)(g) + \Ad_{g^{-1}}^* \xi, \hs x \in \g, \; \xi \in \g^*, \; g \in G.
\end{equation}
A {\it pair of dual  Poisson Lie groups} is a pair of Poisson Lie groups $((G, \piG), (G^*, \piGs))$, where the Lie bialgebra $(\g, \delta_{\g})$ of $(G, \piG)$ and the Lie bialgebra $(\g^*, \delta_{\g^*})$ of $(G^*, \piGs)$ form a pair of dual Lie bialgebras. 

Let $(Y, \piY)$ be a Poisson manifold, $(G, \piG)$ a Poisson Lie group with Lie bialgebra $(\g, \delta_\g)$, and $\rho: Y \times G \to Y$ a right action of $G$ on $Y$. One says that $\rho$ is a {\it right Poisson action of $(G, \piG)$ on $(Y, \piY)$} if $\rho$ is a Poisson map, when $Y \times G$ is equipped with $\piY \times \piG$, and left Poisson actions are similarly defined. Equivalently, $\rho$ is a right Poisson action if 
$$
l_y \piG(g) + r_g\piY(y) = \piY(\rho(y, g)),
$$
where for $y \in Y$ and $g \in G$, one has the maps $l_y: G \to Y$, $l_yg = \rho(y, g)$ and $r_g: Y \to Y$, $r_gy = \rho(y, g)$. Let $\sigma: \g \to \XX^1(Y)$ be a right or left Lie algebra action of $\g$ on $Y$. One says that $\sigma$ is a {\it Poisson action of $(\g, \delta_\g)$ on $(Y, \piY)$} if 
$$
[\sigma(x), \piY] = \sigma(\delta_\g(x)), \hs x \in \g.
$$
It is well known that a Poisson action of a Poisson Lie group induces a Poisson action of its Lie bialgebra.

\subsection{Mixed product Poisson structures associated to actions of Lie bialgebras} \label{subsec-mixed-prod}

Let $(\g, \delta_\g)$ be a Lie bialgebra with dual Lie bialgebra $(\g^*, \delta_{\g^*})$ and double Lie bialgebra $(\d, \delta_\d)$, let $(Y, \piY)$, $(Z, \piZ)$ be Poisson manifolds, and let 
$$
\rho: \g^* \to \XX^1(Y), \hs \lambda: \g \to \XX^1(Z),
$$
be respectively a right Poisson action of $(\g^*, \delta_{\g^*})$ on $(Y, \piY)$ and a left Poisson action of $(\g, \delta_\g)$ on $(Z, \piZ)$. Then 
$$
\piY \times_{(\rho, \lambda)} \piZ : = (\piY, \: \piZ) - (\rho(\xi^i), \: 0) \wedge (0, \: \lambda(x_i)) \in \XX^2(Y \times Z), 
$$
where $(x_i)$ is any basis of $\g$ and $(\xi^i)$ the dual basis of $\g^*$, is a Poisson structure on $Y \times Z$, called in \cite{Lu-Mou:mixed} the {\it mixed product Poisson structure associated to the pair $(\rho, \lambda)$}, and 
$$
\sigma: \d' \to \XX^1(Y \times Z), \hs \sigma(\xi, x) = (\rho(\xi), -\lambda(x)), \hs (\xi, x) \in \d',
$$
is a right Poisson action of $(\d', \delta_{\d'})$ on $(Y \times Z, \: \piX \times_{(\rho, \lambda)} \piY)$. More generally, if $\rho$ is the restriction to $\g^*$ of a right Poisson action $\sigma_\sY: \d \to \XX^1(Y)$ of $(\d, -\delta_\d)$ on $(Y, \piY)$ and if $-\lambda$ is the restriction to $\g$ of a right Poisson action $\sigma_\sZ: \d \to \XX^1(Z)$ of $(\d, -\delta_\d)$ on $(Z, \piZ)$, then 
$$
\sigma: \d \oplus \d \to \XX^1(Y \times Z), \hs \sigma(a, b) = (\sigma_\sY(a), \sigma_\sZ(b)), \hs a, b \in \d,
$$
is a right Poisson action of $(\d \oplus \d, -\delta_{\d \oplus \d})$ on $(Y \times Z, \: \piX \times_{(\rho, \lambda)} \piY)$, where 
$$
\delta_{\d \oplus \d}(a,b) = [(a,b), \Lambda_{\g, \g'}^{(2)}], \hs a, b \in \d. 
$$

\section{Poisson actions of double symplectic groupoids} \label{sec-action-double}

\subsection{Symplectic and Poisson groupoids} \label{subsec-poss-gpoid}

We recall in this subsection basic facts about symplectic and Poisson groupoids that will be needed later, and refer to \cite{Mackenzie-Xu, wein:cois-calc, Xu:Poioid} for further details.

If $\G \rightrightarrows Y$ is a Lie groupoid with source and target maps $\theta, \tau: \G \to Y$, inverse map $\iota: \G \to \G$, and identity bisection $\varepsilon: Y \to \G$, we use the convention that the multiplication is defined on 
$$
\G^{(2)} = \{ (g_1, g_2) \in \G^2 : \tau(g_1) = \theta(g_2) \}, 
$$
and when no confusion is possible, we will write the groupoid multiplication as concatenation $g_1g_2$, for $(g_1, g_2) \in \G^{(2)}$. Throughout this paper, by a {\it local Lie groupoid}, we will mean a $3$-associative local Lie groupoid in the sense of \cite[Definition 2.7]{assoc_int}. That is, a manifold $\G$ equipped with submersions $\theta, \tau: \G \to Y$ to a base manifold $Y$, an embedding $\varepsilon: Y \to \G$, a multiplication defined on an open neighbourhood $\G^{(2)}_0 \subset \G^{(2)}$ of 
$$
\{ (\varepsilon(y), g) : y = \theta(g), \; y \in Y, g \in \G \} \cup \{ (g, \varepsilon(y)) : \tau(g) = y, \; y \in Y, g \in \G \},
$$
an involution $\iota: \G^{(-1)} \to \G^{(-1)}$ of an open neighbourhood $\G^{(-1)} \subset \G$ of $\varepsilon(Y)$, and satisfying the usual axioms of a Lie groupoid wherever these make sense. A {\it local Poisson groupoid} is a pair $(\G \rightrightarrows Y, \pi)$ where $\G \rightrightarrows Y$ is a local Lie groupoid, and $\pi$ a Poisson bivector field on $\G$ such that the graph of the multiplication 
$$
\{(g_1, \: g_2, \: g_1g_2) : (g_1, g_2) \in \G^{(2)}_0 \} \subset \G \times \G \times \G
$$
is a coisotropic submanifold for the Poisson structure $\pi \times \pi \times (-\pi)$ on $\G \times \G \times \G$. One has a well defined Poisson bivector field $\piY$ on $Y$ such that $\theta(\pi) = -\tau(\pi) = \piY$, and we say that $(\G \rightrightarrows Y, \pi)$ is a local Poisson groupoid {\it over} $(Y, \piY)$. If $\pi$ is non-degenerate and $\dim \G = 2 \dim Y$, one says that $(\G \rightrightarrows Y, \pi)$ is a {\it local symplectic groupoid}, or a {\it symplectic groupoid} if $\G \rightrightarrows Y$ is a Lie groupoid. 

%Recall that if $\rho: Y \times G \to Y$ is a right action of a Lie group on a manifold $Y$, then $Y \times G$ has a natural structure of an {\it action groupoid} over $Y$, with $\rho$ the target map, with $\theta(y, g) = y$, $(y, g) \in Y \times G$, as the source map, and with identity bisection, inverse, and multiplication respectively given by 
%\begin{align*}
%\varepsilon(y) & =(y, e), \hs \hs \; y \in Y,    \\
%\iota(y, g) &= (yg, g^{-1}), \hs (y, g) \in Y \times G,   \\
%(y_1, g_1)(y_2, g_2) & = (y_1, g_1g_2), \hs \text{if} \; y_1g_1 = y_2, \hs y_i \in Y, g_i \in G. 
%\end{align*}

Let $(\G \rightrightarrows Y, \pi)$ be a symplectic groupoid over $(Y, \piY)$ with source map $\theta: \G \to Y$, let $(X, \piX)$ be a Poisson manifold with a map $\mu: X \to Y$, and let 
$$
X \ast \G := \{ (x, g) \in X \times \G: \mu(x) = \theta(g) \} .
$$
A {\it right Poisson action} of $(\G, \pi)$ on $(X, \piX)$ with moment map $\mu$ is a right Lie groupoid action $\lhd$ of $\G$ on $X$ with moment map $\mu$, such that the graph of the action map 
$$
\{(x, g, x \lhd g): (x, g) \in X \ast \G\}
$$
is a coisotropic submanifold of $X \times \G \times X$, equipped with the Poisson structure $\piX \times \pi \times (-\piX) $. Then $\mu: (X, \piX) \to (Y, \piY)$ is automatically anti-Poisson, and we also say that $\lhd$ is a {\it Poisson action of $(\G, \pi)$ on $\mu$}. Left Poisson actions of symplectic groupoids are similarly defined, and in particular,
$$
g \rhd x :  = x \lhd \iota(g), \hs (x ,\iota(g)) \in X \ast \G,
$$
where $\iota$ is the inverse of $\G$, defines a left Poisson action of $(\G, -\pi)$ on $\mu$.  If $S$ is a local bisection of $\G \rightrightarrows Y$, $R_\sS$ denotes the action of $S$ on $X$, i.e $R_\sS(x) = x \lhd g$, where $g \in S$ with $\mu(x) = \theta(s)$. We use the same symbol $R_\sS$ for the action of $S$ on $\G$ itself by right multiplication, and similarly $L_\sS$ denotes the action of $S$ on $\G$ by left multiplication. In particular, by \cite[Theorem 7.1]{Liu-Wei-Xu:dirac} if $S$ is a local Lagrangian bisection of $(\G \rightrightarrows Y, \pi)$, $R_\sS$ is a local Poisson isomorphism of $(X, \piX)$.

\subsection{Double symplectic groupoids} \label{subsec-dble-gpoid}

We recall in this subsection the construction of a double symplectic groupoid of a pair of dual Poisson Lie groups given in \cite[Chapter 4]{lu:thesis} and \cite{LW1}. 

Let $((G, \piG), (G^*, \piGs))$ be a pair of dual Poisson Lie groups, with pair of dual Lie bialgebras $((\g, \delta_\g)$, $(\g^*, \delta_{\g^*}))$ and double Lie algebra $(\d, \lara_\d)$. Throughout this paper, we will make the simplifying assumption that $G$ and $G^*$ are subgroups in a {\it Drinfeld double} $D$, that is a Lie group $D$ with Lie algebra $\d$. With $\Lambda_{\g, \g^*} \in \wedge^2 \d$ as in  \eqref{eq-r-matrix}, one has the multiplicative Poisson structure $\piD = \Lambda_{\g, \g^*}^L - \Lambda_{\g, \g^*}^R$ such that $(\d, \delta_\d)$ is the Lie bialgebra of $(D, \piD)$, and such that both $(G, \piG)$ and $(G^*, -\piGs)$ are Poisson Lie subgroups of $(D, \piD)$. One also has the Poisson structure $\pi^+_\sD = \Lambda_{\g, \g^*}^R + \Lambda_{\g, \g^*}^L$, which is non-degenerate on the open subset $D_0 = GG^* \cap G^*G$ in $D$. Let 
$$
\Gamma = \{ (g, u, u', g') \in G \times G^* \times G^* \times G : \; gu = u' g' \},  
$$
let $Q = G \cap G^*$, and let $Q^2$ act on $\Gamma$ by 
$$
(g, u, u', g') \cdot (q_1, q_2) = (gq_1, \: q_1^{-1}u, \: u' q_2, \: q_2^{-1}g'), \hs  (g, u, u', g') \in \Gamma, \; q_i \in Q.
$$
Then 
\begin{equation} \label{eq-p}
p: \Gamma \to D, \hs p(g, u, u', g') = gu = u' g', \hs (g, u, u', g') \in \Gamma, 
\end{equation}
induces an isomorphism between $\Gamma/ (Q^2)$ and $D_0$, and since $Q$ is a discrete subgroup of $D$, there is a unique non-degenerate Poisson structure $\pi_\sGam$ on $\Gamma$ which lifts $\pi^+_\sD \!\mid_{\sD_0}$. Let 
$$
p_\sL: \Gamma \to G \times G^* \hs \text{and} \hs p_\sR: \Gamma \to G^* \times G
$$ 
be respectively the projections onto the first two and last two factors. Then both $p_\sL$ and $p_\sR$ are local diffeomorphisms, and it is shown in \cite{lu:thesis, LW1} that one has 
\begin{align} 
p_\sL(\pi_\sGam) & = (\piG, \piGs) - (x_i^L, 0) \wedge (0, (\xi^i)^R),  \label{eq-comp-pi+}   \\
p_\sR(\pi_\sGam) & = (\piGs, \piG) - ((\xi^i)^L, 0) \wedge (0, x_i^R), \notag
\end{align}
where $(x_i)$ is a basis of $\g$ and $(\xi^i)$ the dual basis of $\g^*$. I.e $\pi_\sGam$ is locally via $p_\sL$ and $p_\sR$ a mixed product Poisson structure. Moreover, $\Gamma$ has two groupoid structures: one of a groupoid over $G^*$, given by 
\begin{align*}
&\mbox{source map}: \; \theta_\sGs(g, u, u', g')= u,   \\
&\mbox{target map}: \; \tau_\sGs(g, u, u', g') = u',  \\
&\mbox{identity bisection}: \; \varepsilon_\sGs(u) = (e, u, u, e), \hs  u \in G^*,  \\
&\mbox{inverse map}: \; \iota_\sGs(g, u, u', g') = (g^{-1}, u', u, g'^{-1}),  \\
&\mbox{multiplication}: \; \mbox{when} \; u'_1 = u_2,  \hs (g_1, u_1, u'_1, g'_1) \star (g_2, u_2, u'_2, g'_2) = (g_2g_1, \: u_1, \: u'_2, \: g'_2g'_1);
\end{align*}
and one of a groupoid over $G$, given by 
\begin{align*}
&\mbox{source map}: \; \theta_\sG(g, u, u', g')= g,   \\
&\mbox{target map}: \; \tau_\sG(g, u, u', g') = g',  \\
&\mbox{identity bisection}: \; \varepsilon_\sG(g) = (g, e, e, g), \hs  g \in G,  \\
&\mbox{inverse map}: \; \iota_\sG(g, u, u', g') = (g', u^{-1}, u'^{-1}, g),  \\
&\mbox{multiplication}: \; \mbox{when} \; g'_1 = g_2,  \hs (g_1, u_1, u'_1, g'_1) (g_2, u_2, u'_2, g'_2) = (g_1, \: u_1u_2, \: u'_1u'_2, \: g'_2).
\end{align*}
We will denote $\Gamma$ as $\Gamma_\sGs$ and $\Gamma_\sG$, when thought of as a groupoid over $G^*$ and $G$ respectively (notice that we write multiplication in $\Gamma_\sG$ using concatenation and in $\Gamma_\sGs$ using $\star$). Then $(\Gamma_\sGs \rightrightarrows G^*, \pi_\sGam)$ is a symplectic groupoid over $(G^*, \piGs)$ and  $(\Gamma_\sG \rightrightarrows G, \pi_\sGam)$ is a symplectic groupoid over $(G, \piG)$. Note that 
\begin{equation} \label{eq-iota-morphism}
\iota_\sG(\gamma_1 \star \gamma_2) = \iota_\sG(\gamma_1) \star \iota_\sG(\gamma_2) \hs \text{and} \hs \iota_\sGs(\gamma'_1 \gamma'_2) = \iota_\sGs(\gamma'_1) \iota_\sGs(\gamma'_2),
\end{equation}
for $(\gamma_1, \gamma_2) \in \Gamma_\sGs^{(2)}$ and $(\gamma'_1, \gamma'_2) \in \Gamma_\sG^{(2)}$.

%\begin{rem}
%Symplectic groupoids of Poisson Lie groups can be explicitly constructed under much more generality, see \textbf{??}.
%\end{rem}

\subsection{Local dressing actions} \label{subsec-dressing}

Let $O_\sGam \subset \Gamma$ be the maximal open subset containing $\varepsilon_\sG(G) \cup \varepsilon_\sGs(G^*)$ on which the restriction $p \!\!\mid_{\scriptscriptstyle O_\sGam}: O_\sGam \to D_0$ is a diffeomorphism onto its image $O_\sD = p(O_\sGam) \subset D$. Let $O_{\scriptscriptstyle G, G^*}= p_\sL(O_\sGam)$ and $O_{\scriptscriptstyle G^*, G} = p_\sR(O_\sGam)$, so that $O_\sGam$ is the graph of an invertible map   
$$
O_{\sG, \sGs} \to O_{\sGs, \sG}, \hs (g, u) \mapsto (g[u], g^u), \hs (g, u) \in O_{\sG, \sGs},
$$
with inverse 
$$
O_{\sGs, \sG} \to O_{\sG, \sGs}, \hs (u', g') \mapsto (u'[g'], u'^{g'}), \hs (u', g') \in O_{\sGs, \sG},
$$
and for $(g, u) \in O_{\sG, \sGs}$ and $(u', g') \in O_{\sGs, \sG}$, let 
\begin{equation} \label{eq-gamma_gu}
\gamma_{g,u} = (g, \: u, \: g[u], \: g^u) \in O_{\sGam} \hs \text{and} \hs \gamma^{u', g'} = (u'[g'],\: u'^{g'}, \: u',\: g') \in O_{\sGam}.  
\end{equation}
Then one has the {\it local dressing actions}
$$
(g, u) \mapsto g^u, \hs (g, u) \mapsto g[u], \hs (g, u) \in O_{\sG, \sGs}, 
$$
respectively a right local action of $G^*$ on $G$ and a left local action of $G$ on $G^*$, and 
$$
(u', g') \mapsto u'^{g'}, \hs (u', g') \mapsto u'[g'], \hs (u', g') \in O_{\sGs, \sG}, 
$$
a right local action of $G$ on $G^*$ and a left local action of $G^*$ on $G$. One checks that the local dressing actions satisfy the multiplicativity conditions
\begin{align}
\gamma^{u_1u_2, \: g} & = \gamma^{u_1, \: u_2[g]} \gamma^{u_2, \: g},   \label{eq-gam-mult}  \\
\gamma^{u, \: g_1g_2} & = \gamma^{u^{g_1}, \: g_2} \star \gamma^{u, \: g_1}, \notag
\end{align}
whenever the terms involved are defined. 

\begin{lem}
One has 
\begin{equation} \label{lem-Ad}
\frac{d}{dt} \mid_{t = 0} g[\exp(t\xi)] = \Ad_{g^{-1}}^*\xi \hs \text{and} \hs \frac{d}{dt} \mid_{t = 0} \exp(tx)^u = \Ad_u^*x. 
\end{equation}
\end{lem}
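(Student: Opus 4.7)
The plan is to unwind the definitions. By construction (see \eqref{eq-gamma_gu}), for $(g, u) \in O_{\sG, \sGs}$ the element $\gamma_{g, u} \in \Gamma$ records the identity
$$
g \, u \;=\; g[u] \cdot g^u \hs \text{in } D,
$$
with $g[u] \in G^*$ and $g^u \in G$. By uniqueness of this factorization, setting $u = e$ gives $g[e] = e$ and $g^e = g$, and symmetrically $e[u] = u$ and $e^u = e$.

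\smallskip

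\noindent \textbf{First identity.} I would substitute $u = \exp(t\xi)$ and differentiate the identity above at $t=0$ inside the Lie group $D$. Writing $\alpha(t) = g[\exp(t\xi)]$ (with $\alpha(0) = e$) and $\beta(t) = g^{\exp(t\xi)}$ (with $\beta(0) = g$), the product rule in $D$ yields, after applying $l_{g^{-1}}$,
$$
\xi \;=\; \Ad_{g^{-1}}\!\bigl(\alpha'(0)\bigr) + l_{g^{-1}} \beta'(0) \hs \text{in } \d,
$$
where $\alpha'(0) \in \g^*$ and $l_{g^{-1}} \beta'(0) \in \g$. Now apply formula \eqref{eq-Ad} with $g$ replaced by $g^{-1}$ and $x = 0$:
$$
\Ad_{g^{-1}}\!\bigl(\alpha'(0)\bigr) \;=\; r_g \piG^\sharp\!\bigl(\alpha'(0)^L\bigr)(g^{-1}) + \Ad_g^* \alpha'(0),
$$
whose two summands lie in $\g$ and $\g^*$ respectively. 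Comparing the $\g^*$-components of the two displayed equalities gives $\xi = \Ad_g^* \alpha'(0)$, hence $\alpha'(0) = \Ad_{g^{-1}}^* \xi$, as claimed. (The $\g$-component comparison is, as a by-product, a formula for $\beta'(0)$ in terms of $\piG$.)

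\smallskip

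\noindent \textbf{Second identity.} By complete symmetry between the two Lagrangian subalgebras, I would substitute $g = \exp(tx)$ and differentiate $\exp(tx)\,u = \exp(tx)[u]\cdot \exp(tx)^u$ at $t=0$. Setting $\tilde\alpha(t) = \exp(tx)[u]$, $\tilde\beta(t) = \exp(tx)^u$ with $\tilde\alpha(0) = u$ and $\tilde\beta(0) = e$, and applying $l_{u^{-1}}$, this produces
$$
\Ad_{u^{-1}}(x) \;=\; l_{u^{-1}} \tilde\alpha'(0) + \tilde\beta'(0) \hs \text{in } \d,
$$
with $l_{u^{-1}}\tilde\alpha'(0) \in \g^*$ and $\tilde\beta'(0) \in \g$. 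The analogue of \eqref{eq-Ad} with the roles of $(G, \g, \piG)$ and $(G^*, \g^*, \piGs)$ interchanged, applied at $u^{-1}$ and to $x \in \g$, decomposes $\Ad_{u^{-1}}(x) = r_u \piGs^\sharp(x^L)(u^{-1}) + \Ad_u^* x$ into its $\g^*$ and $\g$ parts. Comparing $\g$-components delivers $\tilde\beta'(0) = \Ad_u^* x$.

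\smallskip

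There is no real obstacle: the proof is a routine differentiation of the defining factorization in $D$, and the only substantive input is the explicit formula \eqref{eq-Ad} for the adjoint action of $G$ (respectively $G^*$) on $\d$. The only thing requiring a bit of care is bookkeeping which summand lies in $\g$ versus $\g^*$, so that the component comparison isolates precisely the coadjoint term.
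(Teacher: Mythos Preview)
Your proof is correct and follows essentially the same approach as the paper: differentiate the defining factorization $g\,u = g[u]\cdot g^u$ in $D$ and use formula \eqref{eq-Ad} to isolate the $\g^*$-component. The paper organizes the computation slightly more directly by right-multiplying by $g^{-1}$ (so that $\Ad_g\xi$ appears immediately and one reads off its $\g^*$-part from \eqref{eq-Ad}), whereas you left-multiply by $g^{-1}$ and then invoke \eqref{eq-Ad} at $g^{-1}$; but the content is the same.
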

\begin{proof}
Since 
$$
\Ad_g \xi = \frac{d}{dt} \mid_{t = 0} g \exp(t\xi)g^{-1} =  \frac{d}{dt} \mid_{t = 0} g[\exp(t\xi)] g^{\exp(t\xi)}g^{-1}
$$
and since multiplication in $D$ induces a local isomorphism between $G^* \times G$ and $D$, the first relation follows from \eqref{eq-Ad}. The second relation is similarly proved. 
\end{proof}

\begin{rem} \label{rem-complete}
Recall that $(G, \piG)$ is said to be {\it complete} if the multiplication in $D$ induces a diffeomorphism $G \times G^* \cong D$. In such a case one can identify $(\Gamma, \pi_\sGam)$ with $(D, \pi^+_\sD)$ via the map $p: \Gamma \cong D$ in \eqref{eq-p}, and $\Gamma_\sG$ becomes the action groupoid associated to the right group action $G \times G^* \to G$, $(g, u) \mapsto g^u$, and similarly, $\Gamma_\sGs$ becomes the action groupoid associated to the left action $G \times G^* \to G^*$, $(g, u) \mapsto g[u]$.
\hfill $\diamond$
\end{rem}

\subsection{Poisson actions of double symplectic groupoids}

We specialise in this subsection the criteria in \cite[Theorem 7.1]{Liu-Wei-Xu:dirac} for a Lie groupoid action of a Poisson groupoid to be Poisson to the case of $(\Gamma, \pi_\sGam)$. We fix first a particular local bisection of $\Gamma_\sG \rightrightarrows G$ through any point of $\Gamma$. For $u \in G^*$, one has the open neighbourhood  
$$
O_{\sG, u} = \{ g \in G : (g, u) \in O_{\sG, \sGs} \}
$$
of $e$ in $G$. The next Lemma \ref{lem-B_gamma} is straightforward. 

\begin{lem} \label{lem-B_gamma}
For $\gamma = (g, u, u', g') \in \Gamma$ with $g, g' \in G$ and $u, u' \in G^*$,
$$
S_\gamma = \{ (hg, \: u, \: h[u'], \: h^{u'}g') : h \in O_{\sG, u'} \}
$$ 
is a local bisection of $\Gamma_\sG \rightrightarrows G$ through $\gamma$. 
\end{lem}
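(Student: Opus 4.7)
The plan is to verify the four properties that make $S_\gamma$ a local bisection of $\Gamma_\sG \rightrightarrows G$ through $\gamma$: (a) $S_\gamma \subset \Gamma$; (b) $\gamma \in S_\gamma$; (c) the source map $\theta_\sG$ restricts to a local diffeomorphism from $S_\gamma$ to an open subset of $G$; and (d) the target map $\tau_\sG$ restricts to a local diffeomorphism from $S_\gamma$ to an open subset of $G$. Items (a)--(c) are routine; (d) is the only substantive point.

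For (a), I would combine $gu = u'g'$ with the dressing factorization $hu' = h[u'] \cdot h^{u'}$ (which encodes $\gamma_{h, u'} \in \Gamma$) to get $(hg)u = h(u'g') = h[u'] \cdot (h^{u'}g')$, confirming the defining relation of $\Gamma$. For (b), since $\varepsilon_\sGs(u') = (e, u', u', e) \in O_\sGam$ by construction, one has $(e, u') \in O_{\sG, \sGs}$ and hence $e \in O_{\sG, u'}$; substituting $h = e$ with $e[u'] = u'$ and $e^{u'} = e$ recovers $\gamma$. For (c), the parametrization $\phi: O_{\sG, u'} \to \Gamma$, $\phi(h) = (hg, u, h[u'], h^{u'}g')$, satisfies $\theta_\sG \circ \phi(h) = hg$, which is just the right translation by $g$ of the inclusion $O_{\sG, u'} \hookrightarrow G$; hence $\theta_\sG \circ \phi$ is an open embedding, $\phi$ itself is an embedding, and $S_\gamma = \phi(O_{\sG, u'})$ is an embedded submanifold of $\Gamma$ of dimension $\dim G$.

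For (d), I need $h \mapsto h^{u'}g'$ to be a local diffeomorphism near $h = e$; right-translating by $g'^{-1}$ reduces this to the right dressing map $h \mapsto h^{u'}$. I would compute its derivative at $h = e$ by writing $h = \exp(tx)$ with $x \in \g$ and expanding both sides of the factorization $\exp(tx) \, u' = \exp(tx)[u'] \cdot \exp(tx)^{u'}$ to first order in $D$. After left-translating by $u'^{-1}$ and splitting along $\d = \g \oplus \g^*$, the outcome should be
\begin{equation*}
\frac{d}{dt}\Big|_{t=0} \exp(tx)^{u'} \;=\; \mathrm{pr}_\g\bigl(\Ad_{u'^{-1}}(x)\bigr), \qquad x \in \g,
\end{equation*}
where $\mathrm{pr}_\g: \d \to \g$ denotes the projection along $\g^*$.

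The main obstacle will be to deduce that this derivative is a linear isomorphism $\g \to \g$ from the hypothesis $\varepsilon_\sGs(u') \in O_\sGam$. I would perform a parallel tangent-space computation for $p$ at $\varepsilon_\sGs(u')$, which should show that $p$ being a local diffeomorphism there is equivalent to $\Ad_{u'}(\g^*) \cap \g = 0$, or, by dimension, to the direct-sum decomposition $\d = \Ad_{u'^{-1}}(\g) \oplus \g^*$. This decomposition is in turn precisely the statement that $\mathrm{pr}_\g \circ \Ad_{u'^{-1}}|_\g : \g \to \g$ is invertible, which closes the argument.
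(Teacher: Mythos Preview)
Your proposal is correct. The paper itself offers no proof, declaring the lemma ``straightforward,'' so there is nothing to compare against in detail.

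One remark on step (d): your detour through the local injectivity of $p$ at $\varepsilon_\sGs(u')$ works, but it is more than you need. The map $p$ is a local diffeomorphism at \emph{every} point of $\Gamma$ (since $Q = G\cap G^*$ is discrete and $\Gamma/(Q^2)\cong D_0$), so invoking $\varepsilon_\sGs(u')\in O_\sGam$ is a red herring. More directly, by ad-invariance of $\lara_\d$ one has, for $x\in\g$ and $\eta\in\g^*$,
\[
\la \mathrm{pr}_\g(\Ad_{u'^{-1}}x),\,\eta\ra_\d \;=\; \la \Ad_{u'^{-1}}x,\,\eta\ra_\d \;=\; \la x,\,\Ad_{u'}\eta\ra_\d,
\]
so $\mathrm{pr}_\g\circ\Ad_{u'^{-1}}|_\g = \Ad_{u'}^*$ is the coadjoint action of $G^*$ on $\g$, which is automatically invertible. (This is exactly the formula $\frac{d}{dt}\big|_{0}\exp(tx)^{u'}=\Ad_{u'}^*x$ that the paper records just after this lemma.) Either way the conclusion is the same.
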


\begin{lem} \label{pi-Rpi}
Let $\gamma = (g, u, u', g') \in \Gamma$ with $g, g' \in G$ and $u, u' \in G^*$. Then 
$$
p_\sL \left( \pi_\sGam(\gamma) - R_{\sS_\gamma} \pi_\sGam(\varepsilon_\sG(g))  \right) = (0, \piGs(u)).
$$
In particular, $\pi_\sGam(\gamma) - R_{\sS_\gamma} \pi_\sGam(\varepsilon_\sG(g))$ is tangent to the fibers of $\theta_\sG$. 
\end{lem}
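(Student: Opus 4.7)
The plan is to transport the computation to the local model provided by the diffeomorphism $p_\sL: \Gamma \to G \times G^*$, where the Poisson bivector $\pi_\sGam$ has the explicit expression \eqref{eq-comp-pi+}. The whole computation then boils down to understanding how $R_{\sS_\gamma}$ looks in $p_\sL$-coordinates, and this is where the specific form of $S_\gamma$ given in Lemma \ref{lem-B_gamma} pays off.

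First I would show that under $p_\sL$, the map $R_{\sS_\gamma}$ on a neighbourhood of $\varepsilon_\sG(g)$ is the translation $(a, b) \mapsto (a, bu)$. For any $x = (a, b, c, d) \in \Gamma$ near $\varepsilon_\sG(g)$, one has $\tau_\sG(x) = d$, and the unique $s \in S_\gamma$ with $\theta_\sG(s) = d$ is obtained by taking $h = dg^{-1} \in O_{\sG, u'}$, giving $s = (d,\, u,\, h[u'],\, h^{u'} g')$. Applying the groupoid multiplication in $\Gamma_\sG$,
$$
x \cdot s = (a,\, bu,\, c \cdot h[u'],\, h^{u'} g'),
$$
so $p_\sL(x \cdot s) = (a, bu)$, confirming the claim.

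With this in hand I would apply \eqref{eq-comp-pi+} at both $\gamma$ and $\varepsilon_\sG(g)$. Since $\piGs(e) = 0$ and $(\xi^i)^R(e) = \xi^i$,
\begin{align*}
p_\sL(\pi_\sGam(\gamma)) & = (\piG(g),\, \piGs(u)) - (x_i^L(g), 0) \wedge (0, (\xi^i)^R(u)), \\
p_\sL(\pi_\sGam(\varepsilon_\sG(g))) & = (\piG(g),\, 0) - (x_i^L(g), 0) \wedge (0, \xi^i).
\end{align*}
Pushing the second bivector forward by $\mathrm{id} \times r_u$ and using $(r_u)_*\xi^i = (\xi^i)^R(u)$ gives
$$
p_\sL(R_{\sS_\gamma} \pi_\sGam(\varepsilon_\sG(g))) = (\piG(g),\, 0) - (x_i^L(g), 0) \wedge (0, (\xi^i)^R(u)),
$$
and subtracting from the first line leaves exactly $(0, \piGs(u))$, which is the desired equality.

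For the ``in particular'' statement, note that $\theta_\sG$ factors through $p_\sL$ as the first projection $G \times G^* \to G$, so a bivector at $\gamma$ is tangent to the $\theta_\sG$-fibres precisely when its $p_\sL$-image has vanishing $G$-component; this is manifestly the case for $(0, \piGs(u))$. There is no serious obstacle: the only care needed is in justifying that $S_\gamma$ is a well-defined local bisection in a neighbourhood where the local dressing actions make sense (which follows from Lemma \ref{lem-B_gamma}) and in correctly pushing forward bivectors through a product map, both of which are routine.
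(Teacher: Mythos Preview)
Your proof is correct and follows essentially the same approach as the paper: both arguments identify $p_\sL \circ R_{\sS_\gamma}$ with the right translation $r_{(e,u)}$ on $G \times G^*$, then plug in formula \eqref{eq-comp-pi+} at $\gamma$ and at $\varepsilon_\sG(g)$ and subtract. Your write-up is slightly more explicit in verifying the identity $p_\sL R_{\sS_\gamma} = r_{(e,u)} p_\sL$ and in spelling out the ``in particular'' clause, but the underlying argument is the same.
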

\begin{proof}
It is clear from the definition of $S_\gamma$ that $p_\sL R_{\sS_\gamma} = r_{(e, u)} p_\sL$, thus by \eqref{eq-comp-pi+}, 
\begin{align*}
p_\sL \left( \pi_\sGam(\gamma) - R_{\sS_\gamma} \pi_\sGam(\varepsilon_\sG(g))  \right) & = (\piG(g), \piGs(u)) - (l_gx_i, 0) \wedge (0, r_u\xi^i) \\
   & \hs - r_{(e, u)} \left((\piG(g), 0) - (l_gx_i, 0) \wedge (0, \xi^i) \right)   \\ 
      & = (0, \piGs(u)). 
\end{align*}
\end{proof}

\begin{lem}
For $\xi \in \g^*$ and $g \in G$, one has 
$$
\pi_\sGam^\sharp(\tau_\sG^*(\xi^L))(\varepsilon_\sG(g)) = \frac{d}{dt} \!\mid_{t = 0} \gamma_{g, \:\exp(t\xi)}. 
$$
\end{lem}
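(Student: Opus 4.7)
The plan is to push both sides forward by the local diffeomorphism $p_\sR: \Gamma \to G^* \times G$, noting that $p_\sR(\varepsilon_\sG(g)) = (e, g)$. Since $\tau_\sG = \mathrm{pr}_2 \circ p_\sR$, where $\mathrm{pr}_2: G^* \times G \to G$ is the projection onto the second factor, one has $\tau_\sG^*(\xi^L) = p_\sR^*((0, \xi^L))$, and hence
$$
dp_\sR\bigl( \pi_\sGam^\sharp(\tau_\sG^*(\xi^L))(\varepsilon_\sG(g)) \bigr) = \bigl(p_\sR(\pi_\sGam)\bigr)^\sharp\bigl((0, \xi^L)\bigr)(e, g),
$$
with $p_\sR(\pi_\sGam)$ given by \eqref{eq-comp-pi+}. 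On the other hand, $p_\sR$ sends the curve $t \mapsto \gamma_{g, \exp(t\xi)}$ to $(g[\exp(t\xi)], g^{\exp(t\xi)})$, whose derivative at $t=0$ equals $(\Ad_{g^{-1}}^*\xi, \: V)$ with $V := \frac{d}{dt} \!\mid_{t=0} g^{\exp(t\xi)}$, using \eqref{lem-Ad} for the first component. Since $p_\sR$ is a local diffeomorphism, it suffices to show that $\bigl(p_\sR(\pi_\sGam)\bigr)^\sharp\bigl((0, \xi^L)\bigr)(e, g) = (\Ad_{g^{-1}}^*\xi, V)$.

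At $(e, g)$, using $\piGs(e) = 0$ and $(\xi^i)^L(e) = \xi^i$, the bivector $p_\sR(\pi_\sGam)$ specialises to $(0, \piG(g)) - (\xi^i, 0) \wedge (0, x_i^R(g))$. A direct contraction of this bivector against the covector $(0, \xi^L(g))$, using the pairing $\xi^L(g)(x_i^R(g)) = \langle \Ad_{g^{-1}}^*\xi, x_i \rangle$ coming from right invariance, yields $(\Ad_{g^{-1}}^*\xi, \: \piG^\sharp(\xi^L)(g))$. The identity therefore reduces to the assertion $V = \piG^\sharp(\xi^L)(g)$, namely that the right infinitesimal dressing action of $\g^*$ on $G$ is computed by contracting $\piG$ with $\xi^L$. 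This is the main technical step, and I would verify it by differentiating the defining relation $g \exp(t\xi) = g[\exp(t\xi)] \cdot g^{\exp(t\xi)}$ in the Drinfeld double at $t=0$, decomposing the resulting tangent vector in $\d = \g \oplus \g^*$ via \eqref{eq-Ad}, and then invoking the multiplicativity of $\piG$ (in the form $\piG(g^{-1}) = -l_{g^{-1}} r_{g^{-1}} \piG(g)$) to identify the surviving $\g$-component with $\piG^\sharp(\xi^L)(g)$ after translation to $T_gG$.
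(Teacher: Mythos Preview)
Your proof is correct, and it differs from the paper's only in bookkeeping. You compare both sides in $p_\sR$-coordinates, which forces you to verify separately the dressing-action identity $\frac{d}{dt}\big|_{t=0} g^{\exp(t\xi)} = \piG^\sharp(\xi^L)(g)$ (a fact the paper records only afterwards, in the remark following Proposition~\ref{pro-liu.wei.xu-adapt}, noting it can be proved directly from \eqref{eq-Ad} as you propose). The paper instead computes $p_\sR(\pi_\sGam)^\sharp(0,\xi^L)$ exactly as you do, but then pushes forward by the multiplication $m_\sD: G^*\times G \to D$ to obtain $l_g\xi \in T_gD$, and reads this off in $p_\sL$-coordinates as $(0,\xi)$. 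Since $p_\sL(\gamma_{g,\exp(t\xi)}) = (g, \exp(t\xi))$ has derivative $(0,\xi)$ by definition, the comparison is immediate and the dressing-action formula is bypassed. Your route is slightly longer but entirely sound; the paper's passage to $p_\sL$ is just a shortcut exploiting that the curve $\gamma_{g,\exp(t\xi)}$ is defined precisely so that its $p_\sL$-image is $(g,\exp(t\xi))$.
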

\begin{proof}
Let $m_\sD: G^* \times G \to D$ be the multiplication map. By \eqref{eq-comp-pi+} and \eqref{eq-Ad}, one has 
\begin{align*}
(m_\sD \circ p_\sR) \left( \pi_\sGam^\sharp(\tau_\sG^*(\xi^L))(\varepsilon_\sG(g)) \right) & = m_\sD \left( p_\sR(\pi_\sGam)^\sharp(0, \: l_{g^{-1}}^*\xi) \right) = m_\sD(\Ad_{g^{-1}}^*\xi, \: \piG^\sharp(l_{g^{-1}}^*\xi))  \\
   & = r_g(\Ad_{g^{-1}}^*\xi) + \piG^\sharp(l_{g^{-1}}^*\xi) = l_g \xi,
\end{align*}
thus
$$
p_\sL \left( \pi_\sGam^\sharp(\tau_\sG^*(\xi^L))(\varepsilon_\sG(g)) \right) = (0, \xi).
$$
As $p_\sL$ is a local diffeomorphism, 
$$
\pi_\sGam^\sharp(\tau_\sG^*(\xi^L))(\varepsilon_\sG(g)) = p_\sL^{-1}(0, \xi) = \frac{d}{dt} \!\mid_{t = 0} \gamma_{g, \:\exp(t\xi)}. 
$$
\end{proof}

Let $(Y, \piY)$ be a Poisson manifold, 
$$
\mu: (Y, \piY) \to (G, \piG)
$$ 
a Poisson map, and $\lhd$ a right Lie groupoid action of $\Gamma_\sG$ on $\mu$. Recall from \cite[Section 2]{Lu-Mou:mixed} that one has the dressing action 
\begin{equation} \label{varrhoY}
\varrho_\sY : \g^* \to \XX^1(Y), \hs \hs  \varrho_\sY(\xi) =  \piY^\sharp(\mu^*\xi^L), \hs \xi \in \g^*,
\end{equation}
a right Poisson action of $(\g^*, -\delta_{\g^*})$ on $(Y, \piY)$. The next Proposition \ref{pro-liu.wei.xu-adapt} is a direct application of \cite[Theorem 7.1]{Liu-Wei-Xu:dirac} to $(\Gamma_\sG \rightrightarrows G, -\pi_\sGam)$ and the local bisections $S_\gamma$'s. 

\begin{pro} \label{pro-liu.wei.xu-adapt}
The Lie groupoid action $\lhd$ is a Poisson action of $(\Gamma_\sG \rightrightarrows G, -\pi_\sGam)$ if and only if 
\begin{align}
\varrho_\sY(\xi)(y) & = \frac{d}{dt} \!\mid_{t = 0} (y \lhd \gamma_{\mu(y), \:\exp(t\xi)}), \hs y \in Y, \; \xi \in \g^*,   \label{eq-dirac1}  \\
\piY(y \lhd \gamma) & = L_{y} \left( R_{\sS_\gamma} \pi_\sGam(\varepsilon_\sG(g)) - \pi_\sGam(\gamma) \right) + R_{\sS_\gamma} \piY(y), \hs (y, \gamma) \in Y \ast \Gamma_\sG, \label{eq-dirac2}
\end{align}
where in \eqref{eq-dirac2}, $g = \theta_\sG(\gamma)$ and $L_y: \theta_\sG(\mu(y))^{-1} \to Y$ is the map $L_y(\gamma) = y \lhd \gamma$. By Lemma \ref{pi-Rpi}, the first term in the right hand side of \eqref{eq-dirac2} is well defined. 
\end{pro}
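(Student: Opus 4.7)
The plan is to obtain the proposition as a direct specialization of \cite[Theorem 7.1]{Liu-Wei-Xu:dirac}, which characterizes Poisson actions of Poisson groupoids in terms of (i) a compatibility between the infinitesimal action and the Lie algebroid structure induced by the Poisson bivector and moment map, and (ii) a bisection formula expressing $\piY(y \lhd \gamma)$ as the sum of the translate of $\piY(y)$ under a local bisection through $\gamma$ and a correction term involving the Poisson bivector of the groupoid at $\gamma$ and at $\varepsilon(\theta(\gamma))$. I first recall that, since the local bisection criterion in \cite{Liu-Wei-Xu:dirac} only needs to be verified once through each point of the groupoid, the family $\{S_\gamma\}_{\gamma \in \Gamma}$ from Lemma \ref{lem-B_gamma} suffices, and the problem reduces to translating the abstract criterion into conditions \eqref{eq-dirac1} and \eqref{eq-dirac2} for these particular bisections.

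For the infinitesimal condition, I identify the Lie algebroid of the symplectic groupoid $(\Gamma_\sG \rightrightarrows G, -\pi_\sGam)$ with $T^*G$, and pass to the trivialisation by left-invariant forms. An element $\xi \in \g^*$ then determines the section $\xi^L$, which at $\varepsilon_\sG(g)$ corresponds to the tangent vector $\pi_\sGam^\sharp(\tau_\sG^*(\xi^L))(\varepsilon_\sG(g))$. By the previous lemma, this tangent vector equals $\frac{d}{dt}\mid_{t=0} \gamma_{g,\,\exp(t\xi)}$, so that the right-hand side of \eqref{eq-dirac1} is exactly the infinitesimal action of this Lie algebroid element on $Y$. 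On the other hand, the Liu-Wei-Xu compatibility requires this infinitesimal action to equal $\piY^\sharp(\mu^* \xi^L)(y) = \varrho_\sY(\xi)(y)$, yielding \eqref{eq-dirac1}.

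For the bisection condition \eqref{eq-dirac2}, I apply the general criterion with the groupoid Poisson bivector $-\pi_\sGam$ and the bisection $S_\gamma$. The abstract formula reads
\[
\piY(y \lhd \gamma) \;=\; R_{S_\gamma} \piY(y) \;+\; L_y\!\left( (-\pi_\sGam)(\gamma) \,-\, R_{S_\gamma}(-\pi_\sGam)(\varepsilon_\sG(g)) \right),
\]
which, after flipping the signs, is exactly \eqref{eq-dirac2}. For this to make sense, the bivector inside $L_y$ must be tangent to the $\theta_\sG$-fibre at $\gamma$, which is precisely the content of Lemma \ref{pi-Rpi}. Since $L_y$ is defined on $\theta_\sG^{-1}(\mu(y))$, the first term of \eqref{eq-dirac2} is thus well-defined, and the formula carries the same information as the abstract criterion.

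The main obstacle is essentially bookkeeping: keeping track of the sign inversion induced by working with $(\Gamma_\sG, -\pi_\sGam)$ rather than $\pi_\sGam$, of the conventions distinguishing left and right invariance, and of the identifications between $\g^*$-valued data and sections of the Lie algebroid along $\varepsilon_\sG$. Once these are aligned, both conditions are direct transcriptions of \cite[Theorem 7.1]{Liu-Wei-Xu:dirac}, and no further computation is needed.
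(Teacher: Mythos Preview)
Your proposal is correct and follows exactly the approach the paper takes: the proposition is stated as a direct application of \cite[Theorem~7.1]{Liu-Wei-Xu:dirac} to $(\Gamma_\sG, -\pi_\sGam)$ using the local bisections $S_\gamma$ of Lemma~\ref{lem-B_gamma}. Your account simply spells out the translation (identification of the Lie algebroid section via the preceding lemma for \eqref{eq-dirac1}, and the sign-flipped bisection formula with Lemma~\ref{pi-Rpi} ensuring well-definedness for \eqref{eq-dirac2}) that the paper leaves implicit.
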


The next Proposition \ref{pro-gp/gpoid-action} below, which will be used in  $\S$\ref{sec-lhd-BB_-},  gives a situation in which \eqref{eq-dirac2} is easily verified. Let $(P, \piP)$ be a Poisson manifold with a right Poisson action 
$$
\rho: (P, \piP) \times (G^*, -\piGs) \to (P, \piP)
$$
of $(G^*, -\piGs)$. For $p \in P$ and $u \in G^*$, one has the maps $l_p: G^* \to P$, $l_pu = \rho(p, u)$ and $r_u: P \to P$, $r_up = \rho(p, u)$.

\begin{pro} \label{pro-gp/gpoid-action}
Let $\varphi: (Y, \piY) \to (P, \piP)$ be an immersive Poisson map such that 
$$
\varphi(y \lhd \gamma) = \rho(\varphi(y), \: \theta_\sGs(\gamma)), \hs (y, \gamma) \in Y \ast \Gamma_\sG. 
$$
Then \eqref{eq-dirac2} holds. 
\end{pro}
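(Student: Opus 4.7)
The strategy is to verify \eqref{eq-dirac2} by pushing both sides forward under $\varphi$ and exploiting that $\varphi$ is an immersion. Since $d\varphi_y$ is injective at each $y$, so is its second exterior power $(d\varphi_y)^{\wedge 2}\colon \wedge^2 T_yY \to \wedge^2 T_{\varphi(y)}P$, and it therefore suffices to show that both sides of \eqref{eq-dirac2}, once pushed forward by $\varphi$, coincide as bivectors at $\varphi(y \lhd \gamma)$, and that the resulting identity follows from the Poisson action axioms for $\rho$.

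Write $u = \theta_\sGs(\gamma)$. Since $\varphi$ is Poisson and by the equivariance hypothesis, the left-hand side of \eqref{eq-dirac2} pushes forward to $\piP(\varphi(y \lhd \gamma)) = \piP(\rho(\varphi(y),u))$. For the first right-hand term, the equivariance hypothesis gives $\varphi \circ L_y = l_{\varphi(y)} \circ \theta_\sGs$ on $\theta_\sG^{-1}(\mu(y))$. By Lemma \ref{pi-Rpi}, the bivector $R_{\sS_\gamma}\pi_\sGam(\varepsilon_\sG(g)) - \pi_\sGam(\gamma)$ is tangent to $\theta_\sG^{-1}(g)$ and its $p_\sL$-image is $(0,-\piGs(u))$; since $\theta_\sGs$ factors as the second-factor projection of $p_\sL$, its image under $(\theta_\sGs)_*$ is $-\piGs(u)$. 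Hence the pushforward of $L_y\bigl(R_{\sS_\gamma}\pi_\sGam(\varepsilon_\sG(g)) - \pi_\sGam(\gamma)\bigr)$ equals $-l_{\varphi(y)}\piGs(u)$.

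For the second right-hand term, by Lemma \ref{lem-B_gamma} every element of $S_\gamma$ has $\theta_\sGs$-component equal to $u$; the equivariance hypothesis then yields $\varphi \circ R_{\sS_\gamma} = r_u \circ \varphi$ on a neighbourhood of $y$. Because $\varphi$ is Poisson, the pushforward of $R_{\sS_\gamma}\piY(y)$ is therefore $r_u\piP(\varphi(y))$. Assembling the three computations, the image under $\varphi$ of \eqref{eq-dirac2} reads
$$
\piP(\rho(\varphi(y), u)) \;=\; -l_{\varphi(y)}\piGs(u) + r_u\piP(\varphi(y)),
$$
which is precisely the multiplicativity identity expressing that $\rho$ is a right Poisson action of $(G^*, -\piGs)$ on $(P,\piP)$, and thus holds by assumption.

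The one step requiring care is the identification of $(\theta_\sGs)_*$ applied to $R_{\sS_\gamma}\pi_\sGam(\varepsilon_\sG(g)) - \pi_\sGam(\gamma)$ via Lemma \ref{pi-Rpi}; once this is in place, the verification is essentially bookkeeping, as everything is arranged so that the two ``correction'' terms on the right of \eqref{eq-dirac2} match, after pushforward by $\varphi$, the two terms in the Poisson action axiom for $\rho$. The immersion hypothesis is used only at the very end to pull the identity back to $Y$.
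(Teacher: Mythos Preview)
Your proof is correct and follows essentially the same route as the paper: you establish the intertwining identities $\varphi\circ L_y = l_{\varphi(y)}\circ\theta_\sGs$ and $\varphi\circ R_{\sS_\gamma} = r_u\circ\varphi$, push \eqref{eq-dirac2} forward under $\varphi$, identify the resulting equation with the Poisson action axiom for $\rho$, and conclude by injectivity of $\varphi_*$ on bivectors. The paper's argument is identical but slightly terser, stating the two intertwining relations without justification and omitting the explicit reference to Lemma~\ref{pi-Rpi} that you spell out.
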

\begin{proof}
For $y \in Y$ and $\gamma \in \Gamma_\sG$, it is clear that $\varphi L_y = l_{\varphi(y)} \theta_\sGs$ and $\varphi R_{\sS_\gamma} = r_u \varphi$, where $u = \theta_\sGs(\gamma)$. Since $\rho$ is a Poisson action, one has for $(y, \gamma) \in Y \ast \Gamma_\sG$, 
\begin{align*}
\varphi \left( L_{y} \left(R_{\sS_\gamma} \pi_\sGam(\varepsilon_\sG(g)) - \pi_\sGam(\gamma)\right) + R_{\sS_\gamma}\piY(y) \right) & = - l_{\varphi(y)} \piGs(u) + r_u \piP(\varphi(y))  \\
   & = \piP(\rho(\varphi(y), u)) = \piP(\varphi(y \lhd \gamma))   \\
   & = \varphi(\pi(y \lhd \gamma)),
\end{align*}
and since $\varphi$ is an immersion, this establishes \eqref{eq-dirac2}. 
\end{proof}

\begin{rem}
1) One has similar criteria as in Propositions \ref{pro-liu.wei.xu-adapt} and \ref{pro-gp/gpoid-action}  for a right Lie groupoid action of $(\Gamma_\sGs , \pi_\sGam)$ to be Poisson. In particular, if $\lhd$ is a right Poisson action of $(\Gamma_\sGs \rightrightarrows G^*, \pi_\sGam)$ on a Poisson map $\mu: (Z, \piZ) \to (G^*, -\piGs)$, one has 
\begin{equation} \label{eq-dirac1Z}
\vartheta_\sZ(x)(z) = \frac{d}{dt} \!\mid_{t = 0} (z \lhd \gamma_{\exp(tx), \: \mu(z)}), \hs z \in Z, \; x \in \g, 
\end{equation}
where the dressing action
\begin{equation} \label{lambdaZ}
\vartheta_\sZ : \g \to \XX^1(Z),  \hs \vartheta_\sZ(x) =  \piZ^\sharp(\mu^*x^R), \hs x \in \g, 
\end{equation}
is a left Poisson action of $(\g, \delta_\g)$ on $(Z, \piZ)$. 

2) Applying \eqref{eq-dirac1} and \eqref{eq-dirac1Z} to the identity maps $Id_\sG: G \to G$ and $Id_\sGs: G^* \to G^*$, the dressing actions 
\begin{align}
\varrho_\sG : \g^* \to \XX^1(G), & \hs \hs  \varrho_\sG(\xi) =  \piG^\sharp(\xi^L), \hs \xi \in \g^*, \label{eq-varrhoG}  \\
\vartheta_\sGs : \g \to \XX^1(G^*), & \hs \hs \vartheta_\sGs(x) = - \piGs^\sharp(x^R), \hs x \in \g, \label{eq-lambdaG*}
\end{align}
satisfy 
$$
\varrho_\sG(\xi)(g) = \frac{d}{dt}\! \mid_{t=0} g^{\exp(t\xi)}  \hs \text{and} \hs \vartheta_\sGs(x)(u) = \frac{d}{dt}\! \mid_{t=0} \exp(tx)[u],
$$
a fact which can also be proven directly using \eqref{eq-Ad}. 
\hfill $\diamond$
\end{rem}

\section{A Lagrangian bisection associated to a pair of dual Poisson Lie groups} \label{sec-lag-PLgps}

In \cite{wein-xu}, Weinstein and Xu construct a Lagrangian submanifold in the cartesian square of  the symplectic groupoid of a quasitriangular Poisson Lie group, which they interpret as a classical analogue of a solution to the quantum Yang-Baxter equation. We show in this subsection that when the quasitriangular Poisson Lie group is taken to be the Drinfeld double $(D, \piD)$ of a pair of dual Poisson Lie groups $((G, \piG), (G^*, \piGs))$, this Lagrangian submanifold is essentially the cartesian product of the diagonal in $\Gamma^2$ with the identity bisections of $\Gamma_\sG$ and $\Gamma_\sGs$.

\subsection{The global $\R$-matrix of a Drinfeld double} \label{subsec-wein-xu}

Let $((G, \piG), (G^*, \piGs))$ be a pair of dual Poisson Lie groups as in $\S$\ref{subsec-dble-gpoid}. The theory in \cite{wein-xu} is developed under the simplifying assumption that $(G, \piG)$ is complete. Although completeness of $(G, \piG)$ is not needed in this paper (and indeed our main application is with non-complete Poisson Lie groups), we will assume in this subsection $\S$\ref{subsec-wein-xu} that $(G, \piG)$ is complete, as to relate more easily our presentation to that of Weinstein and Xu.

Thus recall from Remark \ref{rem-complete} that one has a natural identification $(\Gamma, \pi_\sGam) \cong (D, \pi^+_\sD)$, and let $D_\sG$ and $D_\sGs$ denote respectively $D$ with its structure of groupoid over $G$ and $G^*$. Let $D_{diag} \subset D^2$ be the diagonal subgroup and $D' = G^* \times G \subset D^2$, and recall the $r$-matrix $\Lambda_{\g, \g^*}^{(2)} \in \wedge^2 (\d \oplus \d)$ defined in \eqref{eq-r^(2)}. It is clear that the multiplication in $D^2$ restricts to a diffeomorphism $D_{diag} \times D' \cong D^2$, hence $(D, \piD)$ is complete, and by Remark \ref{rem-complete}, $(D^2, \: \pi^+_{\sD^2})$ has the structure of a symplectic groupoid over $(D \cong D_{\diag}, \piD)$, where 
\begin{align*}
\pi^+_{\sD^2} & = (\Lambda_{\g, \g^*}^{(2)})^L + (\Lambda_{\g, \g^*}^{(2)})^R   \\
   & = (\Lambda_{\g, \g^*}^L + \Lambda_{\g, \g^*}^R, \: \Lambda_{\g, \g^*}^L + \Lambda_{\g, \g^*}^R) - (\Lambda^L + \Lambda^R)   \\
   & = (\pi^+_\sD, \: \pi^+_\sD) - (\Lambda^L + \Lambda^R), 
\end{align*}
and where $\Lambda$ is defined in \eqref{eq-t}. Let $D_\sGs^{op}$ denote $D_\sGs$ with the opposite groupoid structure, so that $(D_\sGs^{op}, \pi^+_\sD)$ is a symplectic groupoid over $(G^*, -\piGs)$. We lift in the following Proposition \ref{lem-Psi} the Poisson isomorphism $(G, \piG) \times (G^*, -\piGs) \cong (D, \piD)$ to an isomorphism of symplectic groupoids $(D_\sG, \pi^+_\sD) \times (D_\sGs^{op}, \pi^+_\sD) \cong (D^2, \pi^+_{\sD^2})$.

\begin{pro} \label{lem-Psi}
The map $\Psi: (D_\sG, \pi^+_\sD) \times (D_\sGs^{op}, \pi^+_\sD) \to (D^2, \:  \pi^+_{\sD^2})$ given by 
$$
\Psi(g_1u_1, \: g_2u_2) = (g_1u_1u_2, \: g_1g_2u_2), \hs g_i \in G, \; u_i \in G^*,
$$
is an isomorphism of symplectic groupoids.
\end{pro}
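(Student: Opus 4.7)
The plan is to verify three claims about $\Psi$ in turn: it is a diffeomorphism, a Lie groupoid isomorphism, and a Poisson map.

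First I would use completeness of $(G, \piG)$ to identify both copies of $D$ with $G \times G^*$ via $p_\sL$, whose inverse is multiplication $m_\sD \colon G \times G^* \to D$. Under the induced identification of $D \times D$ with $(G \times G^*)^2$ on both sides of $\Psi$, a short calculation shows that $\Psi$ becomes
$$
(g_1, u_1, g_2, u_2) \longmapsto (g_1, u_1 u_2, g_1 g_2, u_2),
$$
since $g_1 u_1 u_2 = g_1 \cdot (u_1 u_2)$ and $g_1 g_2 u_2 = (g_1 g_2) \cdot u_2$ are the unique $G \cdot G^*$ factorizations. This is manifestly a diffeomorphism, with explicit smooth inverse $(g_1, u_1, g_2, u_2) \mapsto (g_1, u_1 u_2^{-1}, g_1^{-1} g_2, u_2)$.

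Next I would verify that $\Psi$ is a Lie groupoid isomorphism. Restricted to the identity bisection $G \times G^*$ of $D_\sG \times D_\sGs^{op}$, the map $\Psi$ sends $(g, u) \mapsto (gu, gu) \in D_{diag}$, matching $m_\sD$ under $D_{diag} \cong D$. For source compatibility, the source of $g_1 u_1 \in D_\sG$ is $g_1$, while the source of $g_2 u_2 \in D_\sGs^{op}$ is the target of $g_2 u_2$ in $D_\sGs$, namely $g_2[u_2]$, so the combined source in the base $G \times G^*$, viewed in $D$ via $m_\sD$, is $g_1 \cdot g_2[u_2]$. Meanwhile, using $g_2 u_2 = g_2[u_2] \, g_2^{u_2}$, the $D_{diag}$-component of $\Psi(g_1 u_1, g_2 u_2) = (g_1 u_1 u_2, g_1 g_2 u_2)$ in the decomposition $D^2 = D_{diag} \cdot (G^* \times G)$ is computed to be $g_1 g_2[u_2]$, as required. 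Target compatibility is analogous, and multiplicativity of $\Psi$ reduces to a short check using the composition rules in $D_\sG$, $D_\sGs^{op}$, and $D^2$, together with the relations \eqref{eq-gam-mult}.

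The main obstacle is showing that $\Psi$ is a Poisson map. My approach would use the explicit expressions $\pi^+_{D^2} = (\pi^+_\sD, \pi^+_\sD) - \Lambda^L - \Lambda^R$ with $\Lambda = (\xi^i, 0) \wedge (0, x_i) \in \wedge^2(\d \oplus \d)$, and \eqref{eq-comp-pi+} to recast each $\pi^+_\sD$ as a mixed product Poisson structure on $G \times G^*$. Computing $\Psi^*\pi^+_{D^2}$ in the $(g_1, u_1, g_2, u_2)$ coordinates, the contribution from $(\pi^+_\sD, \pi^+_\sD)$ transforms in a controllable way under the shearing $(g_1, u_1, g_2, u_2) \mapsto (g_1, u_1 u_2, g_1 g_2, u_2)$, while the correction from $\Lambda^L + \Lambda^R$ contributes cross-terms between the two $D$-factors. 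I expect the identity $\Lambda_{\g, \g^*}^{(2)} = (\Lambda_{\g, \g^*}, \Lambda_{\g, \g^*}) - \Lambda$ from \eqref{eq-r^(2)} to emerge as the key algebraic input making both sides match, combined with the dressing action formulas from Lemma \ref{lem-Ad} to handle the pushforwards at non-identity points.
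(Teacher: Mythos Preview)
Your proposal is correct and follows essentially the same route as the paper's proof: separate verification of diffeomorphism, groupoid morphism, and Poisson map, the latter via the explicit mixed-product expression \eqref{eq-comp-pi+} for $\pi^+_\sD$ together with the $\Lambda$-term decomposition of $\pi^+_{\sD^2}$. The paper carries out the Poisson step by pushing $(\pi^+_\sD, \pi^+_\sD)$ forward under $\Psi$ rather than pulling $\pi^+_{\sD^2}$ back, using the dressing-action expressions $\piG(g) = l_g x_i \otimes \varrho_\sG(\xi^i)(g)$, $\piGs(u) = -r_u\xi^i \otimes \vartheta_\sGs(x_i)(u)$ and the formula \eqref{eq-Ad} for $\Ad_g$ on $\d$ as the main computational tool, and handles the groupoid check by writing down the explicit $D_{\diag}\cdot D'$ factorization of a general element of $D^2$.
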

\begin{proof}
We first prove that $\Psi$ is a Poisson map. Let $(x_i)$ be a basis of $\g$ and $(\xi^i)$ the dual basis of $\g^*$. By definition of $\piG$, $\piGs$, and the dressing actions in \eqref{eq-varrhoG} - \eqref{eq-lambdaG*}, one has 
\begin{align*}
\piG(g) & = l_g x_i \otimes \varrho_\sG(\xi^i)(g), \hs g \in G,    \\
\piGs(u) & = - r_u \xi^i \otimes \vartheta_\sGs(x_i)(u), \hs u \in G^*. 
\end{align*}
Hence using \eqref{eq-comp-pi+} and that the multiplication in $D$ gives an isomorphism $G \times G^* \cong D$, one has for $g_i \in G$, $u_i \in G^*$, 
\begin{align*}
\Psi(\pi^+_\sD(g_1u_1),  0) & = (l_{g_1}r_{u_1u_2}x_i, \: l_{g_1}r_{g_2u_2}x_i) \otimes (r_{u_1u_2}\varrho_\sG(\xi^i)(g_1), \: r_{g_2u_2}\varrho_\sG(\xi^i)(g_1))  \\
  & \hs \: + (l_{g_1}r_{u_2} \piGs(u_1), 0) - (l_{g_1}r_{u_1u_2} x_i, \: l_{g_1}r_{g_2u_2}x_i) \wedge (l_{g_1}r_{u_1u_2} \xi^i, 0)    \\
  & = (r_{u_1u_2}\piG(g_1), 0) + (0,  r_{g_2u_2}\piG(g_1)) + (l_{g_1}r_{u_1u_2}x_i, 0) \wedge (0, r_{g_2u_2}\varrho_\sG(\xi^i)(g_1))   \\
   & \hs \: + (l_{g_1}r_{u_2} \piGs(u_1), 0) - (l_{g_1}r_{u_1u_2} x_i, \: l_{g_1}r_{g_2u_2}x_i) \wedge (l_{g_1}r_{u_1u_2} \xi^i, 0)    \\
   & = (r_{u_2}\pi^+_\sD(g_1u_1), 0 ) +  (0,  r_{g_2u_2}\piG(g_1))   \\
   & \hs \: + (l_{g_1}r_{u_1u_2}x_i, 0) \wedge (0, r_{g_2u_2}\varrho_\sG(\xi^i)(g_1)) - (0, l_{g_1}r_{g_2u_2}x_i) \wedge (l_{g_1}r_{u_1u_2} \xi^i, 0)   \\
   & = (r_{u_2}\pi^+_\sD(g_1u_1), 0) +  (0,r_{g_2u_2}\piG(g_1)) +  (l_{g_1}r_{u_1u_2}x_i, 0) \wedge (0, r_{g_2u_2}\varrho_\sG(\xi^i)(g_1))   \\
   &  \hs \: + (r_{g_1u_1u_2}\Ad_{g_1}\xi^i, 0) \wedge (0, r_{g_1g_2u_2}\Ad_{g_1}x_i) \\
   & = (r_{u_2}\pi^+_\sD(g_1u_1), 0) +  (0, r_{g_2u_2}\piG(g_1)) + (r_{g_1u_1u_2}, \: r_{g_1g_2u_2})(\Lambda)   \\
   & = (r_{u_2}\pi^+_\sD(g_1u_1), 0) +  (0, r_{g_2u_2}\piG(g_1)) - r_{(g_1u_1u_2, \: g_1g_2u_2)}\Lambda 
\end{align*}
where the second to last line is obtained using \eqref{eq-Ad}. Similarly, one has 
$$
\Psi(0, \pi^+_\sD(g_2u_2)) = (l_{g_1u_1}\piGs(u_2), \:0) + (0, l_{g_1}\pi^+_\sD(g_2u_2)) - l_{(g_1u_1u_2, \: g_1g_2u_2)}\Lambda. 
$$
Thus again using \eqref{eq-comp-pi+} and the multiplicativity of $\piG$ and $\piGs$, one has 
\begin{align*}
\Psi(\pi^+_\sD(g_1u_1), \: \pi^+_\sD(g_2u_2)) & = \left( \pi^+_\sD(g_1u_1u_2), \: \pi^+_\sD(g_1g_2u_2) \right) -  r_{(g_1u_1u_2, \: g_1g_2u_2)}\Lambda  - l_{(g_1u_1u_2, \: g_1g_2u_2)}\Lambda   \\
   & = \pi^+_{\sD^2}(g_1u_1u_2, \: g_1g_2u_2),
\end{align*}
hence $\Psi$ is Poisson. Now, for $g_1, g_2 \in G$ and $u_1, u_2 \in G^*$, the relations
$$
(g_1u_1, \: g_2u_2) = (gu, gu)(v,k) = (v', k')(g'u', g'u'), 
$$
with 
\begin{align*}
& g = g_1 \in G, \hs \;\;\:  u = (g_1^{-1}g_2)[u_2] \in G^*, \; v = k[u_2^{-1}] \in G^*, \hs\; k = (g
_1^{-1}g_2)^{u_2} \in G,    \\
& g' = g_1^{u_1u_2^{-1}} \in G, \; u' = u_2 \in G^*, \hs\hs\hs\; v' = g_1[u_1u_2^{-1}] \in G^*, \; k' = g_2 (g_1^{-1})^v \in G,
\end{align*}
completely determine the structure of $D^2$, as a groupoid over $D$. In particular, letting $\theta_\sD$, $\tau_\sD$ be the source and target map of $D^2 \rightrightarrows D$, one has 
\begin{align*}
\theta_\sD(\Psi(g_1u_1, \: g_2u_2)) & =  \theta_\sD(g_1u_1u_2, \: g_1g_2u_2) = g_1 g_2[u_2] = \theta_\sG(g_1u_1) \theta_\sGs^{op}(g_2u_2),  \\
\tau_\sD(\Psi(g_1u_1, \: g_2u_2)) & = \tau_\sD(g_1u_1u_2, \: g_1g_2u_2) = g_1^{u_1}u_2 = \tau_\sG(g_1u_1) \tau_\sGs^{op}(g_2u_2),
\end{align*}
where $\theta_\sGs^{op} = \tau_\sGs$, $\tau_\sGs^{op} = \theta_\sGs$ are the source and target maps of $D_\sGs^{op} \rightrightarrows G^*$. Showing that $\Psi$ commutes with the other groupoid structure maps is a straightforward verification, which is left to the reader. 
\end{proof}

Recall that $\iota_\sGs$ denotes the inverse in the groupoid $D_\sGs \rightrightarrows G^*$, so that 
$$
(Id_\sD \times \iota_\sGs) \Psi^{-1}: (D^2,  \pi^+_{\sD^2}) \to (D_\sG, \pi^+_\sD) \times  (D_\sGs, -\pi^+_\sD)
$$ 
is an isomorphism of symplectic groupoids. As $- \xi^i \otimes x_i \subset \d \otimes \d$ is a quasitriangular $r$-matrix for $(\d, \delta_\d)$, where $(x_i)$ is a basis of $\g$ and $(\xi^i)$ the dual basis of $\g^*$, by a straightforward application of \cite[Definition 4.3]{wein-xu} to the quasitriangular Poisson Lie group $(D, \piD)$, 
$$
\R = \{ \left( (uv, \: uk), \: (u[k]^{-1}u, \: u[k]^{-1}g) \right) : \; g, k \in G, \; u, v \in G^* \} \subset D^2 \times D^2
$$
is the global $\R$-matrix of $(D^2, \pi^+_{\sD^2})$. The following Lemma \ref{lem-R} is straightforward.

\begin{lem} \label{lem-R}
One has 
$$
(Id_\sD \times \iota_\sGs \times Id_\sD \times \iota_\sGs)(\Psi^{-1} \times \Psi^{-1})(\R) = G^* \times D_{\rm diag} \times G. 
$$
\end{lem}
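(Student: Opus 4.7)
The strategy is a direct computation: apply $\Psi^{-1}\times\Psi^{-1}$ to a typical element of $\mathcal{R}$, then apply $Id_\sD\times\iota_\sGs\times Id_\sD\times\iota_\sGs$, and read off what the resulting four-tuple is. All of the ingredients needed — the explicit form of $\Psi^{-1}$ coming from Proposition \ref{lem-Psi}, the local dressing relations $uk = u[k]\,u^k$ and their iterations in \eqref{eq-gamma_gu}, and the explicit inverse $\iota_\sGs$ — are already on the table, so the work is mostly bookkeeping under the completeness hypothesis that $D\cong GG^*$ as manifolds.

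First, I would invert $\Psi$ on each of the two pairs making up an element of $\mathcal{R}$. For the first pair $(uv,uk)$, both factors of $\Psi(g_1u_1,g_2u_2)=(g_1u_1u_2,g_1g_2u_2)$ must match; since $uv\in G^*$, uniqueness of the decomposition $D\cong GG^*$ forces $g_1=e$ and $u_1u_2=uv$, while $uk=u[k]\,u^k$ gives $g_2=u[k]$ and $u_2=u^k$, hence $u_1=uv(u^k)^{-1}$. Thus
\[
\Psi^{-1}(uv,uk) = \bigl(uv(u^k)^{-1},\; uk\bigr).
\]
For the second pair $(u[k]^{-1}u,\;u[k]^{-1}g)$, the component $u[k]^{-1}u$ is already in $G\cdot G^*$ form, giving $g_1=u[k]^{-1}$ and $u_1u_2=u$; the second component $u[k]^{-1}g\in G$ forces $u_2=e$ and $g_2=g$, so $u_1=u$. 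Hence
\[
\Psi^{-1}\bigl(u[k]^{-1}u,\; u[k]^{-1}g\bigr) = \bigl(u[k]^{-1}u,\; g\bigr).
\]

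Next, I would compute $\iota_\sGs$ on the second and fourth entries. Using the explicit formula $\iota_\sGs(g,u,u',g')=(g^{-1},u',u,g'^{-1})$ from $\S$\ref{subsec-dble-gpoid} together with $p$, an element $X=gu=u'g'\in D$ satisfies $\iota_\sGs(X)=g^{-1}u'=ug'^{-1}$. Applied to $X=uk$ (so $u'=u$ and $g'=k$ in the $G^*G$ decomposition, while $g=u[k]$, $u=u^k$ in the $GG^*$ decomposition), this yields
\[
\iota_\sGs(uk) = u[k]^{-1}u \;=\; u^k k^{-1},
\]
which crucially coincides with the third entry. Applied to $X=g\in G$, it gives $\iota_\sGs(g)=g^{-1}$. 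So the image of a typical point of $\mathcal{R}$ is
\[
\bigl(uv(u^k)^{-1},\; u[k]^{-1}u,\; u[k]^{-1}u,\; g^{-1}\bigr).
\]

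It remains to identify the set of such four-tuples. The middle two entries agree identically, so the image is contained in $D\times D_{\rm diag}\times D$. Fixing $u$ and $k$ and letting $v$ range over $G^*$, the first entry $uv(u^k)^{-1}$ sweeps out all of $G^*$, and $g^{-1}$ sweeps out all of $G$. The remaining point to check is that the common middle entry $u[k]^{-1}u$ ranges over all of $D$ as $(u,k)\in G^*\times G$: writing $\alpha\in D$ as $\alpha=h\cdot u$ with $h\in G$, $u\in G^*$ (unique by completeness of $(G,\piG)$), one solves $u[k]=h^{-1}$ for $k\in G$ by decomposing $h^{-1}u\in D$ in the form $u'g'$ with $u'\in G^*$, $g'\in G$ (possible by completeness of $(G^*,\piGs)$ as a Poisson Lie group, which follows from that of $(G,\piG)$ and the identification $\Gamma\cong D$), and taking $k=g'^{-1}$. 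Combining these three observations proves the equality $G^*\times D_{\rm diag}\times G$ asserted in the lemma. The only place a subtlety could arise is in this last surjectivity step, where one must use that completeness of the pair makes both dressing decompositions global.
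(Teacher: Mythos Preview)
Your computation is correct and is exactly the direct verification the paper has in mind (the paper offers no proof beyond the word ``straightforward''). The identifications of $\Psi^{-1}$ on both pairs and of $\iota_\sGs$ on $uk$ and on $g$ are right, and the key observation that $\iota_\sGs(uk)=u[k]^{-1}u$ coincides with the third entry is what makes the lemma work.

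One small slip in the surjectivity step: with $\alpha=h\,u$ ($h\in G$, $u\in G^*$) you need $u[k]=h^{-1}$, i.e.\ $huk\in G^*$. Decomposing $h^{-1}u=u'g'$ and setting $k=(g')^{-1}$ gives $uk=hu'$, hence $u[k]=h$, not $h^{-1}$. The fix is to decompose $hu$ (rather than $h^{-1}u$) in the $G^*G$ form $hu=u'g'$ and take $k=(g')^{-1}$; then $uk=h^{-1}u'$ and $u[k]=h^{-1}$ as required. With this correction the argument goes through, and your remark that both global decompositions $D=GG^*=G^*G$ are available under the standing completeness hypothesis is exactly the point needed.
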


%Thus the global $r$-matrix $\R$ can be interpreted as the direct product of the Lagrangian bisection $\L \subset (D_\sGs, -\pi^+_\sD) \times (D_\sG, \pi^+_\sD)$ with the identity bisections $G^*\subset (D_\sG, \pi^+_\sD)$ and $G \subset (D_\sGs, -\pi^+_\sD)$. 

\subsection{A Lagrangian bisection associated to a pair of dual Poisson Lie groups}

We no longer assume from now on that $(G, \piG)$ is complete. In the spirit of Lemma \ref{lem-R},  the Lagrangian submanifold 
$$
\Gamma_{\diag} \subset \left( \Gamma_\sGs \times \Gamma_\sG, \: (-\pi_\sGam) \times \pi_\sGam \right),
$$
should be thought of as a ``reduced global $\R$-matrix" associated to the pair of dual Poisson Lie groups $((G, \piG), (G^*, \piGs))$. In particular, 
\begin{equation}  \label{eq-Lag-bis}
\L := (O_\sGam)_{\diag} \subset \Gamma_{\diag}
\end{equation}
is an open subset in $\Gamma_{\diag}$, hence a Lagrangian submanifold of $\left( \Gamma_\sGs \times \Gamma_\sG, \: (-\pi_\sGam) \times \pi_\sGam \right)$, and a local bisection of $\Gamma_\sGs \times \Gamma_\sG$, thus induces the local Poisson isomorphism $R_\ssL$ of $(G^*, -\piGs) \times (G, \piG)$,  
$$
R_\ssL: \O^{21}_{\sGs, \sG} \to O_{\sGs, \sG}, \hs R_\ssL(u, g) = (g[u], g^u), \hs (u, g) \in O^{21}_{\sGs, \sG},
$$
with inverse 
$$
R_{\ssL^{-1}}: O_{\sGs, \sG} \to O^{21}_{\sGs, \sG}, \hs R_{\ssL^{-1}}(u, g) =  (u^g, u[g]), \hs (u, g) \in O_{\sGs, \sG}, 
$$
where 
$$
O^{21}_{\sGs, \sG} = \{ (u, g) : (g, u) \in O_{\sG, \sGs} \}. 
$$
The local bisection $\L$, or rather $\L^{-1}$, has the role of ``twisting" the direct product Lie group multiplication in $G \times G^*$ into the multiplication in $D$. Indeed, let 
$$
m_{\scriptscriptstyle G, G^*}: G \times G^* \times G \times G^* \to G \times G^*
$$
be the direct product multiplication, that is $m_{\scriptscriptstyle G, G^*}(g_1, u_1, g_2, u_2) = (g_1g_2, u_1u_2)$, $g_i \in G$, $u_i \in G^*$, and let $m_\sD: D \times D \to D$ be the multiplication in $D$. Then one has 
\begin{equation} \label{lem-twist-mult}
m_\sD(g_1u_1, g_2u_2) = m_\sD(m_{\scriptscriptstyle G, G^*}(g_1, u_1^{g_2}, u_1[g_2], u_2)), \hs g_1 \in G, \; u_2 \in G^*, \; (u_1, g_2) \in O_{\sGs, \sG}. 
\end{equation}

We show in $\S$\ref{sec-local-p-gpoid} below, how $\L$ can be used to construct (local) Poisson groupoids over mixed product Poisson structures.

\section{Local Poisson groupoids over mixed product Poisson structures} \label{sec-local-p-gpoid}

The central result of $\S$\ref{sec-local-p-gpoid} is Theorem \ref{main-thm-gpoid}, which is a construction of a local Poisson groupoid over a mixed product Poisson structure associated to the actions of a pair of dual Lie bialgebras. We fix a pair of dual Poisson Lie groups $((G, \piG), (G^*, \piGs))$ as in $\S$\ref{subsec-dble-gpoid}. 

\subsection{Twisted multiplicative groupoid actions} \label{subsec-twist-mult}

Let $(\Y, \pi_\ssY)$, $(\Z, \pi_\ssZ)$ be local Poisson groupoids over Poisson manifolds $(Y, \piY)$ and $(Z, \piZ)$, and let 
$$
\mu_\ssY: (\Y, \pi_\ssY) \to (G, \piG), \hs \mu_\ssZ: (\Z, \pi_\ssZ) \to (G^*, -\piGs),
$$
be morphisms of local Poisson groupoids, inducing the dressing actions $\varrho_\ssY:  \g^* \to \XX^1(\Y)$ and $\vartheta_\ssZ: \g \to \XX^1(\Z)$ defined in \eqref{varrhoY} and \eqref{lambdaZ}. Assume given right Poisson actions $\lhd_\sG$, $\lhd_\sGs$ of $(\Gamma_\sG, -\pi_\sGam)$ on $\mu_\ssY$ and of $(\Gamma_\sGs, \pi_\sGam)$ on $\mu_\ssZ$ respectively, which satisfy the {\it twisted multiplicativity} properties
\begin{align}
\tilde{y}_1 \tilde{y}_2 \lhd_\sG \gamma_2 \star \gamma_1 & = (\tilde{y}_1 \lhd_\sG \gamma_1) (\tilde{y}_2 \lhd_\sG \gamma_2), \hs (\tilde{y}_1, \tilde{y}_2) \in \Y^{(2)}_0, \; (\gamma_2, \gamma_1) \in \Gamma_\sGs^{(2)}, \label{eq-Y-twist} \\
\tilde{z}_1\tilde{z}_2 \lhd_\sGs \gamma_1 \gamma_2 & = (\tilde{z}_1 \lhd_\sGs \gamma_1) (\tilde{z}_2 \lhd_\sGs \gamma_2), \hs (\tilde{z}_1, \tilde{z}_2) \in \Z^{(2)}_0, \; (\gamma_1, \gamma_2) \in \Gamma_\sG^{(2)}, \label{eq-Z-twist}
\end{align}
whenever the left and right hand side of \eqref{eq-Y-twist} - \eqref{eq-Z-twist} are defined. 

\begin{rem}
Assume that $(\Z, \pi_\ssZ)$ is a Poisson groupoid. Using \cite[Theorem 2.4] {Xu:Poioid}, it is easy to see that $\vartheta_\ssZ$ satisfies
\begin{equation} \label{eq-m_ssZ}
\vartheta_\ssZ(x)(\tilde{z}_1\tilde{z}_2) = dm_\ssZ(\vartheta_\ssZ(x)(\tilde{z}_1), \: \vartheta_\ssZ(\Ad^*_{\mu_\ssZ(\tilde{z}_1)}x)(\tilde{z}_2)), \hs (\tilde{z}_1, \tilde{z}_2) \in \Z^{(2)}, \; x \in \g, 
\end{equation}
where $m_\ssZ: \Z^{(2)} \to \Z$ is the groupoid multiplication map. If furthermore $(G, \piG)$ is complete and $\vartheta_\ssZ$ integrates to a group action $(g, \tilde{z}) \mapsto g[\tilde{z}]$ of $G$ on $\Z$, \eqref{eq-m_ssZ} is equivalent to  
\begin{equation} \label{eq-fernan-ponte}
g[\tilde{z}_1\tilde{z}_2] = g[\tilde{z}_1]g^{\mu_\ssZ(\tilde{z}_1)}[\tilde{z}_2], \hs (\tilde{z}_1, \tilde{z}_2) \in \Z^{(2)}, \; g \in G,
\end{equation}
which is nothing but \eqref{eq-Z-twist}, rewritten using the fact that $\Gamma_\sGs$ is an action groupoid, see Remark \ref{rem-complete}. Condition \eqref{eq-fernan-ponte} first appeared in  \cite{rui-poissonactions} (see also \cite{Lu:Duke-Poi}), from which we have borrowed the expression ``twisted multiplicative". Fernandes and Ponte prove that if $(\Z, \pi_\ssZ)$ is a source simply connected symplectic groupoid and if $(G, \piG)$ is complete, $\vartheta_\ssZ$ integrates to a group action satisfying \eqref{eq-fernan-ponte}.
\hfill $\diamond$
\end{rem}

We write the groupoid structure maps of $\Y \rightrightarrows Y$ and $\Z \rightrightarrows Z$ using subscripts, e.g $\theta_\ssY$, $\theta_\ssZ$ are the respective source maps, etc. By letting the identity bisection of $\Gamma_\sG$ and $\Gamma_\sGs$ act on the identity bisection of $(\Y, \pi_\ssY)$ and $(\Z, \pi_\ssZ)$ respectively, one obtains actions
$$
\varrho_\sY: Y \times G^* \to Y  \hs \text{and} \hs \vartheta_\sZ: G \times Z \to Z, 
$$
which we denote by $\varrho_\sY(y, u) = y^u$ and $\vartheta_\sZ(g, z) = g[z]$, satisfying 
\begin{align}
&\theta_{\ssY}(\tilde{y} \lhd_\sG \gamma) = \theta_{\ssY}(\tilde{y})^{\tau_{\sGs}(\gamma)},  \hs \;\;\; \tau_{\ssY}(\tilde{y} \lhd_\sG \gamma) = \tau_{\ssY}(\tilde{y})^{\theta_{\sGs}(\gamma)}, \hs \hs (\tilde{y}, \gamma) \in \Y \ast \Gamma_\sG, \label{eq-twist-sourcetarget} \\
&\theta_{\ssZ}(\tilde{z} \lhd_\sGs \gamma) = \theta_\sG(\gamma)[\theta_{\ssZ}(\tilde{z})],  \hs \tau_{\ssZ}(\tilde{z} \lhd_\sGs \gamma) = \tau_\sG(\gamma)[\tau_{\ssZ}(\tilde{z})], \hs \hs (\tilde{z}, \gamma) \in \Z \ast \Gamma_\sGs. \notag
\end{align}

\begin{lem} \label{lem-varrho-Ad}
Let $\tilde{y} \in \Y$, $\tilde{z} \in \Z$, $x \in \g$ and $\xi \in \g^*$. Then one has 
$$
\theta_\ssY(\varrho_\ssY(\xi)(\tilde{y})) = \varrho_\sY(\Ad^*_{\mu_\ssY(\tilde{y})^{-1}} \xi)(\theta_\ssY(\tilde{y})) \hs \text{and} \hs \tau_\ssZ(\vartheta_\ssZ(x)(\tilde{z})) = \vartheta_\sZ(\Ad^*_{\mu_\ssZ(\tilde{z})}x)(\tau_\ssZ(\tilde{y})). 
$$
\end{lem}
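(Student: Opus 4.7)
The plan is to extract each identity from two ingredients: the Poisson-action criterion of Proposition \ref{pro-liu.wei.xu-adapt} (in its form \eqref{eq-dirac1}, respectively the analogous formula \eqref{eq-dirac1Z}), applied to the moment maps $\mu_\ssY$ and $\mu_\ssZ$ on the total spaces $\Y$ and $\Z$, together with the source and target compatibilities recorded in \eqref{eq-twist-sourcetarget}. The two identities are perfectly symmetric, so I will describe the first in detail and only indicate the substitutions needed for the second.

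For the first identity, since $\lhd_\sG$ is a Poisson action of $(\Gamma_\sG, -\pi_\sGam)$ on $\mu_\ssY: (\Y, \pi_\ssY) \to (G, \piG)$, I would first invoke Proposition \ref{pro-liu.wei.xu-adapt} with $(Y, \piY)$ replaced by $(\Y, \pi_\ssY)$ to get
$$\varrho_\ssY(\xi)(\tilde y) \;=\; \frac{d}{dt}\bigg|_{t=0}\, \tilde y \,\lhd_\sG\, \gamma_{\mu_\ssY(\tilde y),\,\exp(t\xi)}.$$
Applying $d\theta_\ssY$ to both sides, I would then use the first equality in \eqref{eq-twist-sourcetarget} together with the formula $\tau_\sGs(\gamma_{g, u}) = g[u]$ from \eqref{eq-gamma_gu} to obtain
$$d\theta_\ssY\, \varrho_\ssY(\xi)(\tilde y) \;=\; \frac{d}{dt}\bigg|_{t=0}\, \theta_\ssY(\tilde y)^{\,\mu_\ssY(\tilde y)[\exp(t\xi)]}.$$
The curve $u(t) := \mu_\ssY(\tilde y)[\exp(t\xi)]$ in $G^*$ satisfies $u(0) = e$, since $\gamma_{g,e} = \varepsilon_\sG(g)$ forces $g[e]=e$, and by \eqref{lem-Ad} its velocity at $t=0$ is $\Ad^*_{\mu_\ssY(\tilde y)^{-1}}\xi$. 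The chain rule, combined with the fact that $\varrho_\sY$ is by construction the infinitesimal generator of the local right $G^*$-action $(y, u) \mapsto y^u$ on $Y$, then identifies the right hand side with $\varrho_\sY(\Ad^*_{\mu_\ssY(\tilde y)^{-1}}\xi)(\theta_\ssY(\tilde y))$, as claimed.

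The second identity is obtained by the symmetric argument, now using the analog \eqref{eq-dirac1Z} of \eqref{eq-dirac1} for the Poisson action $\lhd_\sGs$, the fourth equality in \eqref{eq-twist-sourcetarget}, the formula $\tau_\sG(\gamma_{g, u}) = g^u$ from \eqref{eq-gamma_gu} (in place of $\tau_\sGs(\gamma_{g,u}) = g[u]$), the fact that $e^u = e$ (following from $\gamma_{e,u} = \varepsilon_\sGs(u)$), and the second formula in \eqref{lem-Ad}. I do not anticipate any real obstacle; the only subtlety is matching signs and keeping track of which of the two equalities in \eqref{eq-twist-sourcetarget} and \eqref{lem-Ad} is invoked in each case, so that the correct covariance $\Ad^*_{\mu_\ssY(\tilde y)^{-1}}\xi$ (resp.\ $\Ad^*_{\mu_\ssZ(\tilde z)}x$) appears on the right hand side.
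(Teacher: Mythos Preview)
Your proposal is correct and follows essentially the same approach as the paper: differentiate the identity $\theta_{\ssY}(\tilde{y} \lhd_\sG \gamma_{\mu_\ssY(\tilde{y}),\, u}) = \theta_{\ssY}(\tilde{y})^{\mu_\ssY(\tilde{y})[u]}$ (from \eqref{eq-twist-sourcetarget}) in $u$, and combine \eqref{eq-dirac1} with \eqref{lem-Ad}. Your write-up is simply more explicit than the paper's one-line sketch, and you have correctly identified all the ingredients and the right covariance factors.
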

\begin{proof}
Let $u \in G^*$. The first relation is obtained by differentiating 
$$
\theta_{\ssY}(\tilde{y} \lhd_\sG \gamma_{\theta_\ssY(\tilde{y}), \: u}) = \theta_{\ssY}(\tilde{y})^{\mu_\ssY(\tilde{y})[u]}
$$
with respect to $u$ and using \eqref{lem-Ad} and \eqref{eq-dirac1}. The second relation is similarly proved. 
\end{proof}

\begin{lem} \label{lem-rhoY-lambdaZ}
The actions $\varrho_\sY: (Y, \piY) \times (G^*, \piGs) \to (Y, \piY)$ and $\vartheta_\sZ: (G, \piG) \times (Z, \piZ) \to (Z, \piZ)$ are Poisson actions. 
\end{lem}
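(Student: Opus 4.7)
The two statements are symmetric, so I focus on $\varrho_\sY$. My plan is to verify the infinitesimal Poisson action identity
\[
[\varrho_\sY(\xi),\,\piY] \;=\; -\varrho_\sY(\delta_{\g^*}(\xi)), \qquad \xi \in \g^*,
\]
which, under the (mild and situationally justified) assumption that $G^*$ is connected, integrates to the required Poisson action property of $\varrho_\sY: Y \times G^* \to Y$ with respect to $(G^*, \piGs)$ by the standard integration theorem for Poisson actions of Lie bialgebras on Poisson manifolds.

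To prove the infinitesimal identity I would pass from $\Y$ down to $Y$ via the source map $\theta_\ssY$, exploiting the already-known Poisson-action identity upstairs: since $\mu_\ssY: (\Y, \pi_\ssY) \to (G, \piG)$ is Poisson, $\varrho_\ssY(\xi) := \pi_\ssY^\sharp(\mu_\ssY^*\xi^L)$ defines a right Poisson action of $(\g^*, -\delta_{\g^*})$ on $(\Y, \pi_\ssY)$, so that $[\varrho_\ssY(\xi),\pi_\ssY] = -\varrho_\ssY(\delta_{\g^*}\xi)$. Although $\varrho_\ssY(\xi)$ itself is not $\theta_\ssY$-projectable in general (its projection is twisted by $\mathrm{Ad}^*_{\mu_\ssY^{-1}}$, by Lemma \ref{lem-varrho-Ad}), the same lemma implies that the twisted vector field $\tilde V_\xi \in \XX^1(\Y)$ defined by
$\tilde V_\xi(\tilde y) = \varrho_\ssY(\mathrm{Ad}^*_{\mu_\ssY(\tilde y)}\xi)(\tilde y)$
satisfies $d\theta_\ssY \circ \tilde V_\xi = \varrho_\sY(\xi) \circ \theta_\ssY$. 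Since $\theta_\ssY$ is a Poisson submersion with $d\theta_\ssY(\pi_\ssY) = \piY$, projectability of the Schouten bracket yields $d\theta_\ssY\bigl([\tilde V_\xi,\,\pi_\ssY]\bigr) = [\varrho_\sY(\xi),\,\piY]$, reducing matters to evaluating $[\tilde V_\xi,\pi_\ssY]$ along $\varepsilon_\ssY(Y)$ modulo $\ker d\theta_\ssY$.

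For this last step, I would expand $\mathrm{Ad}^*_{\mu_\ssY}\xi = (\mu_\ssY^* a_k)\xi^k$ in a basis $(\xi^k)$ of $\g^*$ with $a_k \in C^\infty(G)$, and apply the graded Leibniz rule $[fV,\pi] = f[V,\pi] - V \wedge \pi^\sharp(df)$ to each summand. This decomposes $[\tilde V_\xi,\pi_\ssY]$ into a ``constant coefficient'' piece $-(\mu_\ssY^*a_k)\varrho_\ssY(\delta_{\g^*}\xi^k)$ coming from $[\varrho_\ssY(\xi^k),\pi_\ssY] = -\varrho_\ssY(\delta_{\g^*}\xi^k)$, and a ``derivative'' piece $-\varrho_\ssY(\xi^k) \wedge \pi_\ssY^\sharp(\mu_\ssY^*da_k)$. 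Along $\varepsilon_\ssY(Y)$, where $\mu_\ssY = e$, the coefficients $a_k(e)$ are exactly the components of $\xi$, and $\pi_\ssY^\sharp(\mu_\ssY^*da_k)|_{\varepsilon_\ssY(y)} = \varrho_\ssY(da_k|_e)|_{\varepsilon_\ssY(y)}$ by the very definition of $\varrho_\ssY$; combining the two pieces using the Lie bialgebra cocycle identity (which records that the derivative of $g \mapsto \mathrm{Ad}^*_g\xi$ at $g=e$ matches, in antisymmetrised form, the cobracket $\delta_{\g^*}\xi$) yields $-\varrho_\ssY(\delta_{\g^*}\xi)$ along $\varepsilon_\ssY(Y)$, and application of $d\theta_\ssY$ then produces $-\varrho_\sY(\delta_{\g^*}\xi)(y)$ at a general $y \in Y$. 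The main technical obstacle is the careful bookkeeping of Schouten--Leibniz signs, normalisation factors, and Lie bialgebra sign conventions needed so that the ``derivative'' contribution combines correctly with the ``constant coefficient'' piece; this is a routine, if intricate, computation in Poisson Lie group theory. The proof for $\vartheta_\sZ$ is entirely symmetric, replacing $\theta_\ssY$ by $\tau_\ssZ$ and the left-invariant conventions by the right-invariant ones consistent with the moment map $\mu_\ssZ: (\Z,\pi_\ssZ) \to (G^*,-\piGs)$.
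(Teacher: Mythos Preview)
Your approach is correct in outline but takes a substantially different and more laborious route than the paper. The paper gives a two-line argument using Weinstein's coisotropic calculus: the graph of $\varrho_\sY$ in $Y \times G^* \times Y$ is precisely the image of the (coisotropic) graph of the groupoid action $\lhd_\sG$ under the anti-Poisson surjective submersion
\[
\tau_\ssY \times \theta_\sGs \times \tau_\ssY : (\Y \times \Gamma_\sG \times \Y,\; \pi_\ssY \times (-\pi_\sGam) \times (-\pi_\ssY)) \longrightarrow (Y \times G^* \times Y,\; \piY \times \piGs \times (-\piY)),
\]
using the relation $\tau_\ssY(\tilde y \lhd_\sG \gamma) = \tau_\ssY(\tilde y)^{\theta_\sGs(\gamma)}$ from \eqref{eq-twist-sourcetarget}. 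Since images of coisotropic submanifolds under (anti-)Poisson submersions are coisotropic, the graph of $\varrho_\sY$ is coisotropic, i.e.\ $\varrho_\sY$ is Poisson. No infinitesimal computation is needed, no connectedness hypothesis on $G^*$ is used, and no sign bookkeeping arises.

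By contrast, your strategy verifies the infinitesimal bialgebra identity and then integrates. This can be made to work, but note: (i) the identity you aim for should read $[\varrho_\sY(\xi),\piY]=\varrho_\sY(\delta_{\g^*}\xi)$ in the paper's conventions (a Poisson action of $(G^*,\piGs)$, whose Lie bialgebra is $(\g^*,\delta_{\g^*})$), so your sign needs checking; (ii) the ``derivative'' piece $\varrho_\ssY(\xi^k)\wedge\varrho_\ssY(da_k|_e)$ is itself proportional to $\varrho_\ssY(\delta_{\g^*}\xi)$ and does \emph{not} vanish, so the claimed combination requires genuine care with Schouten--Leibniz conventions that you defer; (iii) you must assume $G^*$ connected. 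The advantage of your approach is that it stays within the familiar Lie bialgebra formalism; the cost is a calculation that the paper bypasses entirely by working at the level of graphs and using that $\lhd_\sG$ is already known to be a Poisson action of the symplectic groupoid.
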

\begin{proof}
Indeed, the graph of $\varrho_\sY$ in $Y \times G^* \times Y$ is the image of the graph of $\lhd_\sG$ in $\Y \times \Gamma_\sG \times \Y$ under the anti-Poisson map 
$$
\tau_\ssY \times \theta_\sGs \times \tau_\ssY: (\Y \times \Gamma_\sG \times \Y, \: \pi_\ssY \times (-\pi_\sGam) \times (-\pi_\ssY)) \to (Y \times G^* \times Y, \: \piY \times \piGs \times (-\piY)).
$$
Thus the graph of $\varrho_\sY$ is coisotropic in $(Y \times G^* \times Y, \: \piY \times \piGs \times (-\piY))$, which means that $\varrho_\sY$ is Poisson. A similar argument shows that $\vartheta_\sZ$ is Poisson. 
\end{proof}

Hence one can equip $Y \times Z$ with the mixed product Poisson structure $\piY \times_{(\varrho_\sY, \vartheta_\sZ)} \piZ$.

\subsection{The local Poisson isomorphism $R_\ssL$}

As $(\Gamma_\sGs \times \Gamma_\sG, \: \pi_\sGam \times (-\pi_\sGam))$ acts on $\mu_\ssZ \times \mu_\ssY$, the local Lagrangian bisection $\L$ induces the local Poisson isomorphism 
$$
R_\ssL:  (O^{21}_{\ssZ, \ssY}, \: \pi_\ssZ \times \pi_\ssY) \to (O_{\ssZ, \ssY}, \: \pi_\ssZ \times \pi_\ssY),
$$
where 
$$
O^{21}_{\ssZ, \ssY} = (\mu_\ssZ \times \mu_\ssY)^{-1}(O^{21}_{\sGs, \sG}) \subset \Z \times \Y \hs \text{and} \hs O_{\ssZ, \ssY} = (\mu_\ssZ\times \mu_\ssY)^{-1}(O_{\sGs, \sG}) \subset \Z \times \Y,
$$
given by 
$$
R_\ssL(\tilde{z}, \tilde{y}) = (\tilde{z} \lhd_\sGs \gamma_{\mu_\ssY(\tilde{y}), \: \mu_\ssZ(\tilde{z})}, \;\; \tilde{y} \lhd_\sG \gamma_{\mu_\ssY(\tilde{y}), \: \mu_\ssZ(\tilde{z})}), \hs (\tilde{z}, \tilde{y}) \in O^{21}_{\ssZ, \ssY},
$$
recall \eqref{eq-gamma_gu}.  One has the left action 
$$
\gamma \rhd_\sG \tilde{y} := \tilde{y} \lhd_\sG \iota_\sG(\gamma), \hs (\tilde{z}, \iota_\sG(\gamma)) \in \Y \ast \Gamma_\sG,
$$
of $(\Gamma_\sG, \pi_\sGam)$ on $\mu_\ssY$, and similarly, the left action  
$$
\gamma \rhd_\sGs \tilde{z} := \tilde{z} \lhd_\sGs \iota_\sGs(\gamma), \hs  (\tilde{z}, \iota_\sGs(\gamma)) \in \Z \ast \Gamma_\sGs
$$
of $(\Gamma_\sGs, -\pi_\sGam)$ on $\mu_\ssY$. Then the inverse of $R_\ssL$, 
$$
R_{\ssL^{-1}}: (O_{\ssZ, \ssY}, \: \pi_\ssZ \times \pi_\ssY) \to  (O^{21}_{\ssZ, \ssY}, \: \pi_\ssZ \times \pi_\ssY)
$$
is given by 
$$
R_{\ssL^{-1}}(\tilde{z}, \tilde{y}) = (\gamma^{\mu_\ssZ(\tilde{z}), \: \mu_\ssY(\tilde{y})} \rhd_\sGs \tilde{z}, \;\; \gamma^{\mu_\ssZ(\tilde{z}), \:\mu_\ssY(\tilde{y})} \rhd_\sG  \tilde{y}), \hs (\tilde{z}, \tilde{y}) \in O_{\ssZ, \ssY}. 
$$

\subsection{Local Poisson groupoids over mixed product Poisson structures}

Let 
$$
\Y \times_\ssL \Z
$$
denote $\Y \times \Z$ equipped with the following local groupoid structure maps: 
\begin{align}
&\mbox{source map}: \; \theta(\tilde{y}, \: \tilde{z})= (\theta_\ssY(\tilde{y}), \; \mu_\ssY(\tilde{y})[\theta_\ssZ(\tilde{z})]), \hs (\tilde{y}, \: \tilde{z}) \in \Y \times_{(\mu_\ssY, \mu_\ssZ)} \Z,  \label{eq-new-gpoid}  \\
&\mbox{target map}: \; \tau(\tilde{y}, \tilde{z}) = (\tau_\ssY(\tilde{y})^{\mu_\ssZ(\tilde{z})}, \; \tau_\ssZ(\tilde{z})),  \hs (\tilde{y}, \: \tilde{z}) \in \Y \times_{(\mu_\ssY, \mu_\ssZ)} \Z, \notag  \\
&\mbox{identity bisection}: \; \varepsilon(y, z) = (\varepsilon_\ssY(y), \; \varepsilon_\ssZ(z)), \hs (y, z) \in Y \times Z, \notag  \\
&\mbox{inverse map}: \; \mbox{when} \; (\tilde{z}, \tilde{y}) \in R_\ssL^{-1}\left(O_{\ssZ, \ssY} \cap (\Z^{(-1)} \times \Y^{(-1)}) \right),  \notag   \\
& \iota(\tilde{y}, \tilde{z}) = \left( \iota_\ssY(\tilde{y} \lhd_\sG \gamma_{\mu_\ssY(\tilde{y}), \: \mu_\ssZ(\tilde{z})}), \; \iota_\ssZ(\tilde{z} \lhd_\sGs \gamma_{\mu_\ssY(\tilde{y}), \: \mu_\ssZ(\tilde{z})} \right), \notag  \\
&\mbox{multiplication}: \; \mbox{when} \;  \tau(\tilde{y}_1, \tilde{z}_1) = \theta(\tilde{y}_2, \: \tilde{z}_2) \; \mbox{and} \; (\tilde{z}_1, \tilde{y}_2) \in O_{\ssZ, \ssY}, \notag \\ 
& (\tilde{y}_1, \tilde{z}_1) \cdot (\tilde{y}_2, \tilde{z}_2) = \left( \tilde{y}_1\left(\gamma^{\mu_\ssZ(\tilde{z}_1), \: \mu_\ssY(\tilde{y}_2)} \rhd_\sG \tilde{y}_2 \right), \;\; \left( \gamma^{\mu_\ssZ(\tilde{z}_1), \:\mu_\ssY(\tilde{y}_2)} \rhd_\sGs \tilde{z}_1 \right) \tilde{z}_2 \right). \notag
\end{align}

\begin{thm} \label{main-thm-gpoid}
The maps in \eqref{eq-new-gpoid} determine a well-defined local groupoid structure on $\Y \times \Z$ and $(\Y \times_\ssL \Z, \: \pi_\ssY \times \pi_\ssZ)$ is a local Poisson groupoid over $(Y \times Z, \: \piY \times_{(\varrho_\sY, \vartheta_\sZ)} \piZ)$, such that the map 
$$
\mu: (\Y \times_\ssL \Z, \: \pi_\ssY \times \pi_\ssZ) \to (D, \piD), \hs \mu(\tilde{y}, \tilde{z}) = \mu_\ssY(\tilde{y}) \mu_\ssZ(\tilde{z}), \hs  (\tilde{y}, \: \tilde{z}) \in \Y \times_\ssL \Z,
$$
is a morphism of local Poisson groupoids. 
\end{thm}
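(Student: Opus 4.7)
The plan is to exploit that the twisted multiplication on $\Y \times \Z$ factors as the direct-product groupoid multiplication pre-composed with the local Poisson isomorphism induced by $\L$ acting on the middle pair of factors: $(\tilde{y}_1, \tilde{z}_1) \cdot (\tilde{y}_2, \tilde{z}_2) = (\tilde{y}_1 \tilde{y}_2', \tilde{z}_1' \tilde{z}_2)$, where $(\tilde{z}_1', \tilde{y}_2') = R_{\ssL^{-1}}(\tilde{z}_1, \tilde{y}_2)$. The proof splits into four pieces: the local groupoid axioms, that $\mu$ is a groupoid morphism, coisotropy of the multiplication graph, and identification of the base Poisson structure on $Y \times Z$ together with $\mu$ being Poisson.

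For the groupoid axioms, source/target consistency of the twisted product is immediate from \eqref{eq-twist-sourcetarget} applied to $\gamma^{\mu_\ssZ(\tilde{z}_1), \mu_\ssY(\tilde{y}_2)}$, whose source and target in $G^*$ and $G$ realise exactly the dressing shifts required for composability. Associativity is the main combinatorial check: expanding both triple products and comparing them reduces, by the cocycle relations \eqref{eq-gam-mult} for the dressing action combined with the twisted multiplicativity \eqref{eq-Y-twist}, \eqref{eq-Z-twist} of $\lhd_\sG$ and $\lhd_\sGs$, to a matching identity in $\Gamma$. The identity and inverse axioms collapse at the identity bisection to direct-product calculations. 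That $\mu$ is a groupoid morphism then follows from \eqref{lem-twist-mult}: using that $\mu_\ssY, \mu_\ssZ$ are morphisms, $\mu$ of a twisted product expands as $g_1 \cdot u_1[g_2] \cdot u_1^{g_2} \cdot u_2$, which by \eqref{lem-twist-mult} is precisely $g_1 u_1 g_2 u_2 = \mu(\tilde{y}_1, \tilde{z}_1) \mu(\tilde{y}_2, \tilde{z}_2)$ in $D$.

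For coisotropy of the multiplication graph, the graph in $(\Y \times \Z)^3$ of the twisted multiplication is the image of the graph of direct-product multiplication under the rearrangement $(\tilde{y}_1, \tilde{z}_1, \tilde{y}_2, \tilde{z}_2) \mapsto (\tilde{y}_1, R_{\ssL^{-1}}(\tilde{z}_1, \tilde{y}_2), \tilde{z}_2)$. The direct-product Poisson groupoid $(\Y \times \Z, \pi_\ssY \times \pi_\ssZ)$ has coisotropic multiplication graph, and since $\L$ is a local Lagrangian bisection of $(\Gamma_\sGs \times \Gamma_\sG, (-\pi_\sGam) \times \pi_\sGam)$ acting on $\Z \times \Y$, the map $R_{\ssL^{-1}}$ is a local Poisson automorphism of $(\Z \times \Y, \pi_\ssZ \times \pi_\ssY)$. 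The rearrangement therefore preserves $(\pi_\ssY \times \pi_\ssZ)^2$, and its image remains coisotropic.

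For the base Poisson structure, write $\theta(\tilde{y}, \tilde{z}) = (\theta_\ssY(\tilde{y}), \vartheta_\sZ(\mu_\ssY(\tilde{y}), \theta_\ssZ(\tilde{z})))$ and compute $\{f \circ \theta, h \circ \theta\}_{\pi_\ssY \times \pi_\ssZ}$ for test functions on $Y \times Z$. Pure $Y$-$Y$ and $Z$-$Z$ brackets reduce to $\piY$ and $\piZ$ brackets using Poissonness of $\theta_\ssY$, $\theta_\ssZ$, $\mu_\ssY$ and Lemma \ref{lem-rhoY-lambdaZ}. The cross bracket, for $f = f_1 \otimes 1$ and $h = 1 \otimes h_2$, reduces to a sum of terms $\pi_\ssY\bigl(\theta_\ssY^* df_1, \mu_\ssY^*((\xi^i)^R|_g)\bigr)$ weighted by $\vartheta_\sZ(x_i)(h_2)(g[z])$; using the identity $(\xi^i)^R|_g = (\Ad_g^*\xi^i)^L|_g$, the defining relation $\varrho_\ssY(\eta) = \pi_\ssY^\sharp(\mu_\ssY^*\eta^L)$, and Lemma \ref{lem-varrho-Ad}, each such term evaluates to $-\varrho_\sY(\xi^i)(f_1)(\theta_\ssY(\tilde{y}))$, matching exactly the mixed-product cross term. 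The analogous computation handles $\tau$ by symmetry. Finally, $\mu = m_\sD \circ (\mu_\ssY \times \mu_\ssZ)$ is Poisson because $\mu_\ssY \times \mu_\ssZ$ is Poisson into $\piG \times (-\piGs)$ and the group multiplication $m_\sD: (G \times G^*, \piG \times (-\piGs)) \to (D, \piD)$ is Poisson, since $(G, \piG)$ and $(G^*, -\piGs)$ are Poisson Lie subgroups of $(D, \piD)$. The main obstacle will be this base-structure identification: the factorisation via $R_{\ssL^{-1}}$ delivers multiplication compatibility cleanly, but extracting the mixed-product cross term from $\theta_*(\pi_\ssY \times \pi_\ssZ)$ requires the careful dressing-action unpacking outlined above.
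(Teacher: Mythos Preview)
Your proof is correct and follows the same route as the paper: the groupoid axioms via \eqref{eq-gam-mult} and \eqref{eq-Y-twist}--\eqref{eq-Z-twist}; coisotropy of the multiplication graph by recognising it as the image of the direct-product graph under the local Poisson isomorphism induced by $\L$ on the middle two factors; $\mu$ Poisson via the Poisson embedding of $(G,\piG)\times(G^*,-\piGs)$ in $(D,\piD)$ together with \eqref{lem-twist-mult}; and the identification of the base Poisson structure via Lemmas \ref{lem-varrho-Ad} and \ref{lem-rhoY-lambdaZ}, exactly as in the paper's Proposition \ref{pro-theta-tau}. One minor slip: the map carrying the direct-product graph to the twisted graph is $\mathrm{Id}_\ssY \times R_\ssL \times \mathrm{Id}_\ssZ \times \mathrm{Id}_{\ssY\times\ssZ}$, not $R_{\ssL^{-1}}$ (check which side the untwisted composable pair sits on), but since both are local Poisson diffeomorphisms this does not affect the argument.
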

\begin{proof}
Checking that \eqref{eq-new-gpoid} sastisfies the axioms of a local groupoid is lengthy but straightforward. For example, let $(\tilde{y}_1, \tilde{z}_1), (\tilde{y}_2, \tilde{z}_2)$ be composable elements of $\Y \times_\ssL \Z$ and write $g_i = \mu_\ssY(\tilde{y}_i)$, $u_i = \mu_\ssZ(\tilde{z}_i)$. Using \eqref{eq-twist-sourcetarget}, one has 
\begin{align*}
\theta \left( (\tilde{y}_1, \tilde{z}_1) \cdot (\tilde{y}_2, \tilde{z}_2) \right) & = \left( \theta_\ssY(\tilde{y_1} (\gamma^{u_1, \: g_2} \rhd_\sG \tilde{y}_2)), \;\; ( g_1u_1[g_2]) [\theta_\ssZ((\gamma^{u_1, \: g_2} \rhd_\sGs \tilde{z}_1) \tilde{z}_2)]\right)    \\
   & = \left( \theta_\ssY(\tilde{y}_1), \;\; (g_1u_1[g_2])[\theta_\ssZ(\gamma^{u_1, \: g_2} \rhd_\sGs \tilde{z}_1)] \right)    \\
   & = \left( \theta_\ssY(\tilde{y}_1), \;\; (g_1u_1[g_2]\theta_\sG( \tilde{\gamma}_{u_1^{g_2}, \: g_2^{-1}}))[\theta_\ssZ(\tilde{z}_1)]  \right)   \\
   & =  \left( \theta_\ssY(\tilde{y}_1), \;\; g_1[\theta_\ssZ(\tilde{z}_1)]  \right) = \theta(\tilde{y}_1, \: \tilde{z}_1),
\end{align*}
and a similar calculation gives $\tau \left( (\tilde{y}_1, \tilde{z}_1) \cdot (\tilde{y}_2, \tilde{z}_2) \right) = \tau(\tilde{y}_2, \tilde{z}_2)$. Let now $(\tilde{y}_3, \tilde{z}_3)$ be a third element such that both $\left( (\tilde{y}_1, \tilde{z}_1) \cdot (\tilde{y}_2, \tilde{z}_2) \right) \cdot (\tilde{y}_3, \tilde{z}_3)$ and $(\tilde{y}_1, \tilde{z}_1) \cdot \left(  (\tilde{y}_2, \tilde{z}_2) \cdot (\tilde{y}_3, \tilde{z}_3) \right)$ are defined. Then writing $g_3 = \mu_\ssY(\tilde{y}_3)$, $u_3 = \mu_\ssZ(\tilde{z}_3)$, and using \eqref{eq-iota-morphism}, \eqref{eq-gam-mult}, \eqref{eq-Y-twist}, and \eqref{eq-Z-twist}, one has  
\begin{align*}
\left( (\tilde{y}_1, \tilde{z}_1) \cdot (\tilde{y}_2, \tilde{z}_2) \right) \cdot (\tilde{y}_3, \tilde{z}_3) & =  \left( \tilde{y}_1 (\gamma^{u_1, \:g_2} \rhd_\sG \tilde{y}_2) (\gamma^{u_1^{g_2}u_2, \: g_3} \rhd_\sG \tilde{y}_3), \right.   \\
    & \hs \left. \left(\gamma^{u_1^{g_2}u_2, \: g_3} \rhd_\sGs (\gamma^{u_1, \:g_2} \rhd_\sGs \tilde{z}_1 ) \tilde{z}_2 \right) \tilde{z}_3 \right)  \\
    & =\left( \tilde{y}_1 (\gamma^{u_1, \:g_2} \rhd_\sG \tilde{y}_2) (\gamma^{u_1^{g_2}, \: u_2[g_3]} \gamma^{u_2, \: g_3} \rhd_\sG \tilde{y}_3), \right.   \\
    &\hs \left. \left((\gamma^{u_1^{g_2}, \: u_2[g_3]} \gamma^{u_2, \: g_3}) \rhd_\sGs (\gamma^{u_1, \:g_2} \rhd_\sGs \tilde{z}_1 ) \tilde{z}_2 \right) \tilde{z}_3 \right)    \\
    & = \left( \tilde{y}_1 (\gamma^{u_1, \: g_2u_2[g_3]} \rhd_\sG \tilde{y}_2 (\gamma^{u_2, \: g_3} \rhd_\sG \tilde{y}_3)) , \right.   \\
    & \hs \left. (\gamma^{u_1, \: g_2 u_2[g_3]} \rhd_\sGs \tilde{z}_1) (\gamma^{u_2, \: g_3} \rhd_\sGs \tilde{z}_2 ) \tilde{z}_3 \right)  \\
  & = (\tilde{y}_1, \tilde{z}_1) \cdot \left(  (\tilde{y}_2, \tilde{z}_2) \cdot (\tilde{y}_3, \tilde{z}_3) \right).
\end{align*}
We leave to the reader to check the remaining axioms. 

By construction, the graph $Gr_\ssL \subset (\Y \times \Z)^3$ of the multiplication in $Y \times_\ssL \Z$ is  
$$
Gr_\ssL = (Id_\ssY \times R_\ssL \times Id_\ssZ \times Id_\ssY \times Id_\ssZ) \left(Gr \cap (\Y \times O^{21}_{\ssZ, \ssY} \times \Z \times \Y \times \Z \right),
$$
where $Gr \subset (\Y \times \Z)^3$ is the graph of the direct product groupoid multiplication. Hence $Gr_\ssL$ is a coisotropic submanifold, when $(\Y \times \Z)^3$ is equipped with the Poisson structure $\pi_\ssY \times \pi_\ssZ \times \pi_\ssY \times \pi_\ssZ \times (-\pi_\ssY) \times (-\pi_\ssZ)$, thus $(\Y \times_\ssL \Z, \: \pi_\ssY \times \pi_\ssZ)$ is a local Poisson groupoid. 

As the multiplication map in $D$ induces a Poisson map $(G, \piG) \times (G^*, -\piGs) \to (D, \piD)$, the map $\mu$ is Poisson, and it is a morphism of groupoids by \eqref{lem-twist-mult}.  We prove in the Proposition \ref{pro-theta-tau} below that $\theta(\pi_\ssY \times \pi_\ssZ) = \piY \times_{(\varrho_\sY, \vartheta_\sZ)} \piZ$, which will then conclude the proof of Theorem \ref{main-thm-gpoid}. 
\end{proof}

\begin{pro} \label{pro-theta-tau} 
One has 
$$
\theta(\pi_\ssY \times \pi_\ssZ) = \piY \times_{(\varrho_\sY, \vartheta_\sZ)} \piZ = - \tau(\pi_\ssY \times \pi_\ssZ).
$$
\end{pro}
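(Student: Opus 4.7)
The plan is to verify the first equality $\theta_*(\pi_\ssY \times \pi_\ssZ) = \piY \times_{(\varrho_\sY, \vartheta_\sZ)}\piZ$; since coisotropy of the multiplication graph has already been established in the preceding paragraph of the proof of Theorem \ref{main-thm-gpoid}, the local Poisson groupoid axioms then force $-\tau_*(\pi_\ssY \times \pi_\ssZ) = \theta_*(\pi_\ssY \times \pi_\ssZ)$, so the second equality is automatic (alternatively it follows by an analogous calculation in which $(\theta_\ssY, \mu_\ssY, \theta_\ssZ, \vartheta_\sZ)$ is replaced by $(\tau_\ssY, \mu_\ssZ, \tau_\ssZ, \varrho_\sY)$). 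I factor $\theta = \eta \circ \phi$, where
$$
\phi: \Y \times \Z \to Y \times G \times Z, \hs \phi(\tilde y, \tilde z) = (\theta_\ssY(\tilde y),\; \mu_\ssY(\tilde y),\; \theta_\ssZ(\tilde z)),
$$
and $\eta(y, g, z) = (y, g[z])$, and push $\pi_\ssY \times \pi_\ssZ$ forward through $\phi$ first and $\eta$ second.

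The push-forward under $\phi$ decomposes as
$$
\phi_*(\pi_\ssY \times \pi_\ssZ) = \piY(y) + \piG(g) + C(\tilde y) + \piZ(z)
$$
at $(y, g, z) = \phi(\tilde y, \tilde z)$. The three diagonal pieces are immediate, because $\theta_\ssY$, $\theta_\ssZ$ and the Poisson groupoid morphism $\mu_\ssY$ are all Poisson maps, and because $\theta_\ssZ$ is independent of $\tilde y$ in $\phi$. The $Y$-$G$ cross-term $C(\tilde y) \in T_yY \wedge T_gG$ arises precisely because $\theta_\ssY$ and $\mu_\ssY$ share the argument $\tilde y$, and identifying it is the main technical obstacle. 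Pairing $C$ with $\alpha \wedge \xi^L|_g$ for arbitrary $\alpha \in T^*_yY$ and $\xi \in \g^*$, and using the defining formula $\varrho_\ssY(\xi) = \pi_\ssY^\sharp(\mu_\ssY^*\xi^L)$ together with Lemma \ref{lem-varrho-Ad}, I obtain
$$
\langle C, \alpha \wedge \xi^L|_g \rangle = \pi_\ssY(\theta_\ssY^*\alpha,\; \mu_\ssY^*\xi^L) = -\langle \alpha,\; \varrho_\sY(\Ad^*_{g^{-1}}\xi)(y)\rangle.
$$
Since $\langle \xi^L|_g, r_g x_i\rangle = \langle \Ad^*_{g^{-1}}\xi, x_i\rangle$, this forces $C(\tilde y) = -\varrho_\sY(\xi^i)(y) \wedge r_g x_i$.

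Finally, I push the resulting bivector through $\eta = \mathrm{id}_Y \times \vartheta_\sZ$. The $Y$-component is carried through unchanged, and by Lemma \ref{lem-rhoY-lambdaZ}, which asserts that $\vartheta_\sZ$ is a Poisson action of $(G, \piG)$ on $(Z, \piZ)$, the sum $\piG(g) + \piZ(z)$ is carried to $\piZ(g[z])$. For the cross-term, differentiating the left-action identity $\vartheta_\sZ(\exp(tx)\,g, z) = \vartheta_\sZ(\exp(tx), g[z])$ at $t = 0$ gives $d\vartheta_\sZ|_{(g, z)}(r_gx, 0) = \vartheta_\sZ(x)(g[z])$, whence $\eta_*C(\tilde y) = -\varrho_\sY(\xi^i)(y) \wedge \vartheta_\sZ(x_i)(g[z])$. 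Assembling the pieces,
$$
\theta_*(\pi_\ssY \times \pi_\ssZ)(y, g[z]) = \piY(y) + \piZ(g[z]) - \varrho_\sY(\xi^i)(y) \wedge \vartheta_\sZ(x_i)(g[z]) = (\piY \times_{(\varrho_\sY, \vartheta_\sZ)}\piZ)(y, g[z]),
$$
and since $\theta \circ \varepsilon = \mathrm{id}_{Y \times Z}$ the map $\theta$ is surjective, so the identity extends to every point of $Y \times Z$.
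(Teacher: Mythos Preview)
Your proof is correct and uses the same two lemmas (Lemma \ref{lem-varrho-Ad} and Lemma \ref{lem-rhoY-lambdaZ}) as the paper, with essentially the same underlying computation of the cross term. The only difference is organizational: the paper works dually, pulling back covectors $p_\sY^*\alpha$ and $p_\sZ^*\beta$ through $\theta$ and pairing directly with $\pi_\ssY \times \pi_\ssZ$, whereas you factor $\theta = \eta \circ \phi$ through the intermediate space $Y \times G \times Z$ and push bivectors forward in two stages. Your factorization makes the separate roles of $\mu_\ssY$ and of the Poisson action $\vartheta_\sZ$ a bit more visible, while the paper's pull-back computation is slightly more direct; neither gains anything substantive over the other.
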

\begin{proof}
We only prove the first equality, as the second is treated similarly. Let $p_\sY$, $p_\sZ$ be the projections from $Y \times Z$ to the first and second factor. By Lemma \ref{lem-rhoY-lambdaZ}, one has $p_\sY \theta(\pi_\ssY \times \pi_\ssZ) = \piY$ and $p_\sZ \theta(\pi_\ssY \times \pi_\ssZ) = \piZ$. Let
$$
\mu_{(\varrho_\sY, \vartheta_\sZ)} = -(\varrho_\sY(\xi^i), 0) \wedge (0, \vartheta_\sZ(x_i))
$$
be the mixed term of $\piY \times_{(\varrho_\sY, \vartheta_\sZ)} \piZ$, where $(x_i)$ is a basis of $\g$ and $(\xi^i)$ the dual basis of $\g^*$, and let $(\tilde{y}, \tilde{z}) \in \Y \times \Z$, $(y, z) = (\theta_\ssY(\tilde{y}), \: \theta_\ssZ(\tilde{z}))$, $g = \mu_\ssY(\tilde{y})$, and let $\alpha \in T^*_yY$, $\beta \in T^*_{g[z]}Z$. In order to complete the proof of Proposition \ref{pro-theta-tau}, one only needs to show that 
$$
\la \pi_\ssY \times \pi_\ssZ, \: \theta^*( p_\sY^*\alpha \wedge p_\sZ^*\beta) \ra = \la \mu_{(\varrho_\sY, \vartheta_\sZ)}, \: p_\sY^*\alpha \wedge p_\sZ^*\beta \ra. 
$$
Let $p_\ssY$, $p_\ssZ$ be the projections from $\Y \times \Z$ to the first and second factor. As  
\begin{align*}
\theta^*p^*_\sY \alpha & = p_\ssY^* \theta_\ssY^* \alpha,    \\
\theta^*p^*_\sZ \beta & = p_\ssY^* \mu_\ssY^*r_{g^{-1}}^* \vartheta_\sZ^* \beta + p_\ssZ^* \theta_\ssZ^*g^*[\beta],
\end{align*}  
using Lemma \ref{lem-varrho-Ad}, one gets 
\begin{align*}
\la \pi_\ssY \times \pi_\ssZ, \:\theta^*( p_\sY^*\alpha \wedge p_\sZ^*\beta) \ra & = \la \pi_\ssY \times \pi_\ssZ,  \: p_\ssY^* \theta_\ssY^* \alpha \wedge (p_\ssY^* \mu_\ssY^*r_{g^{-1}}^* \vartheta_\sZ^* \beta + p_\ssZ^* \theta_\ssZ^*g^*[\beta]) \ra   \\
   & = \la \pi_\ssY, \:  \theta_\ssY^* \alpha \wedge  \mu_\ssY^*r_{g^{-1}}^* \vartheta_\sZ^* \beta \ra  = -\la \varrho_\ssY(\Ad^*_g\vartheta_\sZ^* \beta), \:  \theta_\ssY^* \alpha \ra   \\
   & = -\la \theta_\ssY \left(\varrho_\ssY(\Ad^*_g\vartheta_\sZ^* \beta) \right), \: \alpha \ra   = -\la \varrho_\sY (\vartheta_\sZ^* \beta), \: \alpha \ra   \\
   & = \la \mu_{(\varrho_\sY, \vartheta_\sZ)}, \: p_\sY^*\alpha \wedge p_\sZ^*\beta \ra,
\end{align*}
which concludes the proof. 
\end{proof}

\begin{rem} \label{rem-ssc-gpoid}
When $(\Y \rightrightarrows Y, \pi_\ssY)$, $(\Z \rightrightarrows Y, \pi_\ssZ)$ are taken to be the source simply connected symplectic groupoids integrating $(Y, \piY)$ and $(Z, \piZ)$, and $\mu_\ssY$, $\mu_\ssZ$ the groupoid morphisms integrating the Lie algebroid morphisms 
\begin{align*}
\varrho_\sY^*: T^*Y \to \g, \hs \la \varrho_\sY^*(\alpha), \xi \ra & = \la \alpha, \varrho_\sY(\xi)(y) \ra, \hs \xi \in \g^*, \; \alpha \in T_y^* Y, \; y \in Y,   \\
\vartheta_\sZ^*: T^*Z \to \g^*, \hs \la \vartheta_\sZ^*(\beta), x \ra & = \la \beta, \vartheta_\sZ(x)(z) \ra, \hs x \in \g, \; \beta \in T_z^* Z, \; z \in Z, 
\end{align*}
(see \cite[Proposition 6.1]{Xu:Poioid}), Theorem \ref{main-thm-gpoid} constructs a local symplectic groupoid over $(Y \times Z, \: \piY \times_{(\varrho_\sY, \vartheta_\sZ)} \piZ)$. 
\hfill $\diamond$
\end{rem}

\begin{exa}
Identify $T^*\IC$ with $\IC^2$ and let $(p, q) \mapsto q$ be the standard projection, and equip $T^*\IC$ with its canonical Poisson structure $\pi = \del_p \wedge \del_q$. Let $\mu: T^*\IC \to \IC^*$ be the map $\mu(p, q) = e^{pq}$, $(p,q) \in T^*\IC$. Then $\mu: (T^*\IC, \pi) \to (\IC^*, \piG = 0)$ is a morphism of Poisson groupoids, where $(\IC^*, 0)$ is a complete Poisson Lie group. As $\Gamma_{\scriptscriptstyle \IC^*} \cong \IC^* \times \IC^*$ is the action groupoid associated to the trivial action of $\IC^*$ on itself,   
$$
(p, q) \lhd (e^{pq}, z) = (z^{-1}p, zq), \hs z \in \IC^*, \; (p,q) \in T^*\IC. 
$$
defines Lie groupoid action of $\Gamma_{\scriptscriptstyle \IC^*}$ on $\mu$. As $((\IC^*, 0), (\IC^*, 0))$ is a pair of dual Poisson Lie groups, applying Theorem \ref{main-thm-gpoid} to $\mu_\ssY = \mu$ and $\mu_\ssZ = \mu$, 
$$
\left( (T^*\IC)^2 \cong T^*(\IC^2), \: \del_{p_1} \wedge \del_{q_1} + \del_{p_2} \wedge \del_{q_2} \right)
$$
becomes a symplectic groupoid over $(\IC^2, -q_1q_2\del_{q_1} \wedge \del_{q_2})$, with groupoid structure given by 
\begin{align*}
&\mbox{source map}: \; \theta(p_1, p_2, q_1, q_2) = (q_1, \: e^{p_1q_1}q_2)   \\
&\mbox{target map}: \; \tau(p_1, p_2, q_1, q_2) = (e^{p_2q_2} q_1, \: q_2),  \\
&\mbox{identity bisection}: \; \varepsilon(q_1, q_2) = (0, \: 0, \: q_1, \: q_2),  \\
&\mbox{inverse map}: \; \iota(p_1, p_2, q_1, q_2) = (-e^{p_1q_1}q_2, \: -e^{p_2q_2}q_1),  \\
&\mbox{multiplication}: \; \mbox{when} \; (e^{p_2q_2} q_1, \: q_2) = (q'_1, \: e^{p'_1q'_1}q'_2), \\ 
&  (p_1, p_2, q_1, q_2) \cdot (p'_1, p'_2, q'_1, q'_2) = (p_1 + e^{p_2q_2}p'_1, \: p'_2 + e^{p'_1q'_1}p_2, \: q_1, \: q'_2). 
\end{align*}
See \cite{li-rupel:cluster}, where this symplectic groupoid was constructed by different methods. 
\hfill $\diamond$
\end{exa}

\section{A pair of dual Poisson Lie groups associated to a standard semisimple Poisson Lie group} \label{sec-std-PL}

We recall in this section the standard complex semisimple Poisson Lie groups and an associated pair of dual Poisson Lie groups. Everything in this section is standard, and we refer to \cite{Lu-Mou:double-B-cell, Lu-Mou:mixed, Lu-victor:Tleaves} for details.

\subsection{Standard complex semisimple Poisson Lie groups} \label{subsec-ss-gps}

Let $\g$ be a complex semisimple Lie algebra with a fixed pair $(\b, \b_-)$ of opposite Borel subalgebras and a fixed non-degenerate symmetric ad-invariant bilinear form $\lara_\g$, and let $\h =  \b \cap \b_-$. Let $\triangle \subset \h^*$ be the roots of $\g$ with respect to $\h$ and $\triangle_+ \subset \triangle$ the positive roots defined by $\b$. One has the triangular decomposition $\g = \h + \sum_{\alpha \in \triangle} \g_\alpha$, and let $\n = \sum_{\alpha \in \triangle_+} \g_\alpha$, $\n_- = \sum_{\alpha \in \triangle_+} \g_{-\alpha}$. For each $\alpha \in \triangle_+$, we fix root vectors $E_{\pm \alpha} \in \g_{\pm \alpha}$ such that $\la E_\alpha, E_{-\alpha} \ra_\g = 1$. 

Let $G$ be a connected complex Lie group with Lie algebra $\g$, and let $B$, $B_-$, $T$, $N$, $N_-$, be the connected subgroups of $G$ with respective Lie algebras $\b$, $\b_-$, $\h$, $\n$, $\n_-$. Let $W = N_\sG(T)/T$ be the Weyl group of $(G, T)$, where $N_\sG(T)$ is the normaliser subgroup of $T$, and let $l: W \to \IN$ be the length function of $W$. Let 
$$
\Lambda_{st} = \sum_{\alpha \in \triangle_+} E_{-\alpha} \wedge E_\alpha \in \wedge^2 \g
$$
be the standard skew-symmetric $r$-matrix associated to the triple $(\b, \b_-, \lara_\g)$, and 
$$
\delta_{st}: \g \to \wedge^2 \g, \hs  \delta_{st}(x) = [x, \Lambda_{st}], \hs x \in \g,
$$
the corresponding standard Lie bialgebra structure on $\g$. The bivector field $\pist = \Lambda_{st}^L - \Lambda_{st}^R$ is a holomorphic multiplicative Poisson structure on $G$ such that $(G, \pist)$ has Lie bialgebra $(\g, \delta_\g)$, and $(G, \pist)$ is called a {\it standard complex semisimple Poisson Lie group}. Equipping the direct sum Lie algebra $\gog$ with the bilinear form 
$$
\la (x, x'), (y, y') \ra_{\gog} = \la x, y \ra_\g - \la x', y' \ra_\g, \hs x, x', y, y' \in \g,  
$$
one has the Manin triple $((\gog, \lara_{\gog}), \g_{diag}, \g')$, where $\g_{diag}$ is the diagonal subalgebra and 
$$
\g' = \{ (x_+ + x,\: -x + x_-) : x_+ \in \n, \; x_- \in \n_-, \; x \in \h \},
$$
such that $(\g, \delta_{st}) \cong (\g_{diag}, \delta_{\g_{diag}})$ under the isomorphism $x \mapsto (x,x)$, $x \in \g$. Thus $(\gog, \lara_{\gog})$ is the double Lie algebra of $(\g, \delta_{st})$.

\subsection{The coisotropic submanifold $C_u$}

For $u \in W$ and any representative $\bar{u} \in N_\sG(T)$ of $u$, let 
$$
C_{\bar{u}} = N\bar{u} \cap \bar{u}N_- \subset G.
$$
By \cite[Lemma 10]{Lu-Mou:double-B-cell}, $C_{\bar{u}}$ is a coisotropic submanifold of $(G, \pist)$ well known to be isomorphic to $\IC^{l(u)}$, and the multiplication in $G$ induces algebraic isomorphisms
\begin{equation} \label{BwBcongCB}
C_{\bar{u}} \times B \to BuB, \;\; (c, b) \mapsto cb \hs \text{and} \hs B_- \times C_{\bar{u}} \to B_-uB_-, \;\;(b_-, c) \mapsto b_-c.
\end{equation}
If $\bfu = (u_1, \ldots, u_n) \in W^n$, where $n \geq 1$, and if $\bar{\bfu} = (\bar{u}_1, \ldots, \bar{u}_n) \in N_\sG(T)^n$ is a representative for $\bfu$, let 
$$
C_{\bar{\bfu}} = C_{\bar{u}_1} \times \cdots \times C_{\bar{u}_n},
$$
and we will make use of the following notation: for $c = (c_1, \ldots, c_n) \in C_{\bar{\bfu}}$, we write 
\begin{equation} \label{eq-ddot}
\underline{c} = c_1 \cdots c_n \in G. 
\end{equation}

\subsection{The pair $((B, \pist)$, $(B_-, -\pist))$ of dual Poisson Lie groups} \label{subsec-bb_-}

The Lie algebras $\b$, $\b_-$ are sub-Lie bialgebras of $(\g, \delta_{st})$, and it is shown in \cite[Section 4]{Lu-Mou:double-B-cell} that when identifying $\b$ and $\b_-$ as dual vector spaces under the bilinear pairing 
$$
\la x_+ + x_0, \; y_- + y_0\ra_{(\b, \b_-)} = -\la x_+, y_- \ra_\g - 2\la x_0, y_0\ra_\g, \hs x_+ \in \n, \; x_0, y_0 \in \h, \; y_- \in \n_-,
$$
between $\b$ and $\b_-$, $((\b, \delta_{st}), (\b_-, -\delta_{st}))$ is a pair of dual Lie bialgebras. In particular, if $\{H_i\}_{i=1}^r$ is a basis of $\h$ satisfying $2\la H_i, H_j \ra_\g = \delta_{i, j}$, the bases 
$$
\{-E_\alpha\}_{\alpha \in \triangle_+} \cup \{-H_i\}_{i=1}^r \hs \text{and} \hs \{E_{-\alpha}\}_{\alpha \in \triangle_+} \cup \{H_i\}_{i=1}^r
$$
of $\b$ and $\b_-$ are dual with respect to $\lara_{(\b, \b_-)}$. Let $\d = \g \oplus \h$ as a direct sum Lie algebra, let $\lara_\d$ be the restriction to $\d \subset \gog$ of the bilinear form $- \lara_{\gog}$, and embed $\b$, $\b_-$ in $\d$ as $\bar{\b} = \{\bar{x}: x \in \b \}$ and $\bar{\bar{\b}}_- = \{ \bar{\bar{\xi}} : \xi \in \b_-\}$, where 
\begin{align*}
\bar{x} & = (x_+ + x_0, \: x_0), \hs \;\: \text{if} \hs x = x_+ + x_0, \hs \text{with} \;  x_+ \in \n, \; x_0 \in \h,   \\
\bar{\bar{\xi}} & = (y_- + y_0, \: -y_0), \hs \text{if} \hs \xi = y_- + y_0, \hs \text{with} \;  y_- \in \n_-, \; y_0 \in \h. 
\end{align*}
One checks that $((\d, \lara_\d), \bar{\b}, \bar{\bar{\b}}_-)$ is a Manin triple with $\la \bar{x}, \bar{\bar{\xi}} \ra_\d = \la x, \xi \ra_{(\b, \b_-)}$, for $x \in \b$, $\xi \in \b_-$, thus $(\d, \delta_\d)$ is the double Lie bialgebra of $(\b, \delta_{st})$.

Both $B$ and $B_-$ are Poisson Lie subgroups of $(G, \pist)$ and $((B, \pist)$, $(B_-, -\pist))$ is a pair of dual Poisson Lie groups with Drinfeld double $D = G \times T$, in which $B$ and $B_-$ are embedded as $\bar{B} = \{ \bar{b} : b \in B \}$ and $\bar{\bar{B}}_- = \{ \bar{\bar{b}}_- : b_- \in B_-\}$, where 
\begin{align*}
\bar{b} & = (hn, h), \hs \hs \text{if} \hs  b = hn,  \hs \text{with} \;  n \in N, \; h \in T,   \\
\bar{\bar{b}}_- & = (mh, h^{-1}),  \hs \text{if} \hs b_- = mh, \hs \text{with} \; m \in N_-, \; h \in T.  
\end{align*} 
The intersection of $\bar{B}$ and $\bar{\bar{B}}_-$ is $\bar{B} \cap \bar{\bar{B}}_- = \bar{T}^{(2)} = \bar{\bar{T}}^{(2)}$, where $T^{(2)} = \{ t \in T : t^2 = e \}$. Applying the theory recalled in $\S$\ref{subsec-dble-gpoid}, one has the two Poisson structures on $D$, $\piD = \Lambda_{\b, \b_-}^L - \Lambda_{\b, \b_-}^R$, $\pi^+_\sD = \Lambda_{\b, \b_-}^L + \Lambda_{\b, \b_-}^R$, where 
$$
\Lambda_{\b, \b_-} = \sum_{\alpha \in \triangle_+} \bar{\bar{E}}_{-\alpha} \wedge \bar{E}_{\alpha},  
$$
and in particular, the projection to the first factor 
\begin{equation} \label{eq-q}
(D, \piD) \to (G, \pist), \hs (g, h) \mapsto g, \hs g \in G, \; h \in T,
\end{equation}
is a morphism of Poisson Lie groups. Let 
$$
\Gamma = \{ (b, \: b_-, \: b'_-, \: b') \in B \times B_- \times B_- \times B : \; \bar{b}\bar{\bar{b}}_- = \bar{\bar{b'}}_-\bar{b'} \},
$$
and let $\pi_\sGam$ be the non-degenerate Poisson structure on $\Gamma$ defined in $\S$\ref{subsec-dble-gpoid}. Recall that $(\Gamma, \pi_\sGam)$ has two symplectic groupoid structures: one over $(B, \pist)$, denoted by $\Gamma_\sB$, and one over $(B_-, -\pist)$, denoted by $\Gamma_{\sB_-}$. Finally recall the dressing actions $\varrho_{\sB}: \b_- \to \XX^1(B)$ and $\vartheta_{\sB_-}: \b \to \XX^1(B_-)$ defined in \eqref{eq-varrhoG}, \eqref{eq-lambdaG*}.

\section{Generalised Double Bruhat cells} \label{sec-dble-bruhat}

In this section, we recall from \cite{Lu-Mou:mixed, Lu-victor:Tleaves} the generalised double Bruhat cell $G^{\bfu, \bfv}$ and the generalised Bruhat cell $\O^{\bfu}$ associated to finite sequences $\bfu, \bfv \in W^n$, where $n \geq 1$, and the holomorphic Poisson structures $\tilde{\pi}_{n, n}$ on $G^{\bfu, \bfv}$ and $\pi_n$ on $\O^{\bfu}$. 

We fix in this section a connected standard semisimple Poisson Lie group $(G, \pist)$ as in $\S$\ref{sec-std-PL}, and recall our notational conventions in $\S$\ref{nota-quotient}.

\subsection{Some quotient manifolds associated to $(G, \pist)$} \label{subsec-F_n}

We recall in this subsection some quotient manifolds associated to $(G, \pist)$ which were introduced in \cite{Lu-Mou:mixed, Lu-victor:Tleaves}. For an integer $n \geq 1$, let 
\begin{align*}
\tilde{F}_n & = G \times_\sB \cdots \times_\sB G, \hs \hs \hs \; F_n = G \times_\sB \cdots \times_\sB G/B,  \\
\tilde{F}_{-n} & = G \times_{\sB_-} \cdots \times_{\sB_-} G,  \hs \hs F'_{-n} = B_- \backslash G \times_{\sB_-} \cdots \times_{\sB_-} G.
\end{align*}
Then $\pist^n$ descends to well defined Poisson structures
\begin{align*}
\tilde{\pi}_n & = \varpi_{\scriptscriptstyle \tilde{F}_n}(\pist^n), \hs \hs \hs \pi_n  = \varpi_{\scriptscriptstyle F_n}(\pist^n),  \\
\tilde{\pi}_{-n} & = \varpi_{\scriptscriptstyle \tilde{F}_{-n}}(\pist^n), \hs \hs \; \pi'_{-n}  = \varpi_{\scriptscriptstyle F'_{-n}}(\pist^n),
\end{align*}
and one has a left Poisson action of $(B, \pist)$ on $(F_n, \pi_n)$ and a right Poisson action of $(B_-, \pist)$ on $(F'_{-n}, \pi'_{-n})$ given by 
\begin{align*}
\lambda_{\sF_n}(b, [g_1, \ldots, g_n]_{\sF_n}) & = [bg_1, g_2, \ldots, g_n]_{\sF_n}, \hs  b \in B, \; g_i \in G,  \\
\rho_{\sF'_{-n}}([g_1, \ldots, g_n]_{\sF'_{-n}}, b_-) & = [g_1, \ldots, g_{n-1}, g_nb_-]_{\sF'_{-n}},  \hs  b_- \in B_-, \; g_i \in G. 
\end{align*}
By $\S$\ref{subsec-bb_-}, one has the dual Poisson Lie groups 
\begin{equation} \label{LL^*}
(L^*, \pi_{\scriptscriptstyle L^*}) = (B, -\pist) \times (B, \pist)  \hs \text{and} \hs (L, \pi_{\scriptscriptstyle L}) = (B_-, \pist) \times (B_-, -\pist),  
\end{equation}
and  
\begin{align*}
\rho_{\scriptscriptstyle \tilde{F}_n} \left([g_1, \ldots, g_n]_{\scriptscriptstyle \tilde{F}_n}, \; (b_1, b_2) \right) & = [b_1^{-1}g_1, g_2, \ldots, g_{n-1}, g_nb_2]_{\scriptscriptstyle \tilde{F}_m}, \hs g_j \in G, \; b_i \in B,   \\
\lambda_{\scriptscriptstyle \tilde{F}_{-n}} \left( (b_{-1}, b_{-2}), \; [g_1, \ldots, g_n]_{\scriptscriptstyle \tilde{F}_{-n}} \right) & = [b_{-1}g_1, g_2, \ldots, g_{n-1}, g_nb_{-2}^{-1}]_{\scriptscriptstyle \tilde{F}_{-n}}, \hs g_j \in G, \; b_{-i} \in B_-, 
\end{align*}
are a right Poisson action of $(L^*, \pi_{\scriptscriptstyle L^*})$ on $(\tilde{F}_n, \tilde{\pi}_n)$ and a left Poisson action of $(L, \pi_{\scriptscriptstyle L})$ on $(\tilde{F}_{-n}, \tilde{\pi}_{-n})$, and let 
$$
\tilde{\pi}_{n,n} = \tilde{\pi}_n \times_{(\rho_{\scriptscriptstyle \tilde{F}_n}, \lambda_{\scriptscriptstyle \tilde{F}_{-n}})} \tilde{\pi}_{-n}
$$
be the mixed product Poisson structure on $\tilde{F}_{n,n} = \tilde{F}_n \times \tilde{F}_{-n}$ associated to the pair $(\rho_{\scriptscriptstyle \tilde{F}_n}, \lambda_{\scriptscriptstyle \tilde{F}_{-n}})$. The maximal torus $T$ acts by Poisson isomorphisms on the Poisson manifolds $(\tilde{F}_{\pm n}, \tilde{\pi}_{\pm n})$, $(F_n, \pi_n)$, and $(\tilde{F}_{n,n}, \tilde{\pi}_{n,n})$ by 
\begin{align*}
& t \cdot [g_1, \ldots, g_n]_{\scriptscriptstyle \tilde{F}_{\pm n}}  = [tg_1, g_2, \ldots, g_n]_{\scriptscriptstyle \tilde{F}_{\pm n}},  \hs \hs t \cdot [g_1, \ldots, g_n]_{\sF_n} = [tg_1, g_2, \ldots, g_n]_{\sF_n},   \\
& t \cdot \left( [g_1, \ldots, g_n]_{\scriptscriptstyle \tilde{F}_n}, \; [h_1, \ldots, h_n]_{\scriptscriptstyle \tilde{F}_{-n}}   \right) = \left( [tg_1, g_2, \ldots, g_n]_{\scriptscriptstyle \tilde{F}_n}, \; [th_1, h_2, \ldots, h_n]_{\scriptscriptstyle \tilde{F}_{-n}}   \right),
\end{align*}
where $t \in T$, $g_j, h_j \in G$, and the $T$-orbits of symplectic leaves of these Poisson manifolds are described in \cite{Lu-victor:Tleaves}.

\subsection{Generalised Bruhat cells} \label{subsec-gen-bru-cell}

Let $n \geq 1$ and $\bfu = (u_1, \ldots, u_n) \in W^n$. The submanifolds
\begin{align*}
\O^\bfu & = B u_ 1B \times_\sB \cdots \times_\sB B u_nB/B \subset F_n, \\ 
\O'^{-\bfu} & = B_- \backslash B_- u_1 B_- \times_{\sB_-} \cdots \times_{\sB_-} B_- u_n B_- \subset F'_{-n},
\end{align*}
are Poisson submanifolds of $(F_n, \pi_n)$ and $(F'_{-n}, \pi'_{-n})$, called in \cite{Lu-victor:Tleaves} {\it generalised Bruhat cells}. Let $\bar{\bfu} \in N_\sG(T)^n$ be a representative of $\bfu$. By an inductive use of the isomorphisms in \eqref{BwBcongCB}, the maps 
\begin{align*}
\varpi_{\sF_n} \mid_{C_{\bar{\bfu}}}: C_{\bar{\bfu}} \to \O^\bfu, \hs \text{and} \hs \varpi_{\sF'_n} \mid_{C_{\bar{\bfu}}}: C_{\bar{\bfu}} \to \O'^{-\bfu},
\end{align*}
are diffeomorphisms, hence in particular $\O^{\bfu} \cong \IC^{l(w_1) + \cdots + l(w_n)}$. Slightly abusing the notation, for $c = (c_1, \ldots, c_n) \in C_{\bar{\bfu}}$, we will write $[c]_{\sF_n} = \varpi_{\sF_n}(c)$ and $[c]_{\sF'_{-n}} = \varpi_{\sF'_n}(c)$.

\begin{lem} \label{lem-isom-I_u} 
The isomorphism 
$$
I_\bfu: (\O'^{-\bfu}, \pi'_{-n}) \to (\O^\bfu, \pi_n), \hs I_\bfu([c]_{\sF'_{-n}}) =  [c]_{\sF_n}, \hs c \in C_{\bar{\bfu}}, 
$$
is an anti-Poisson map. 
\end{lem}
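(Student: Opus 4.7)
I would prove the lemma by showing that the pull-backs $\sigma_+ := (\varpi_{\sF_n}\!\!\mid_{C_{\bar\bfu}})^{-1}_*\pi_n$ and $\sigma_- := (\varpi_{\sF'_{-n}}\!\!\mid_{C_{\bar\bfu}})^{-1}_*\pi'_{-n}$ are opposite bivector fields on $C_{\bar\bfu}$, which is equivalent to $I_\bfu$ being anti-Poisson. Fix $c = (c_1,\ldots,c_n) \in C_{\bar\bfu}$. Since $\pi_n$ and $\pi'_{-n}$ are Poisson push-forwards of $\pist^n$ under the quotient maps $G^n \to F_n$ and $G^n \to F'_{-n}$, each $\sigma_\pm(c) \in \wedge^2 T_c C_{\bar\bfu}$ is uniquely characterized by the congruence $\sigma_\pm(c) \equiv \pist^n(c) \pmod{V_\pm(c)\wedge T_c G^n}$, where $V_+(c)$ and $V_-(c)$ denote the vertical subspaces of these iterated quotient maps. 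The uniqueness of such lifts rests on the transversalities $T_{c_i} C_{\bar u_i} \cap l_{c_i}\b = 0$ and $T_{c_i} C_{\bar u_i} \cap r_{c_i}\b_- = 0$, consequences of the decomposition $T_{c_i} C_{\bar u_i} = r_{c_i}(\n \cap \Ad_{c_i}\n_-)$ combined with $\b \cap \n_- = 0 = \b_- \cap \n$.

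The key computation exploits the identity $\pist = \Lambda_{st}^L - \Lambda_{st}^R$ with $\Lambda_{st} = \sum_{\alpha \in \triangle_+} E_{-\alpha}\wedge E_\alpha$. Because $E_\alpha \in \n \subset \b$, each wedge factor $l_{c_i} E_\alpha$ aligns with the right-$B$ vertical direction at $c_i$; dually, $E_{-\alpha} \in \b_-$ aligns $r_{c_i}E_{-\alpha}$ with the left-$B_-$ vertical direction. One shows that, with the appropriate cross-term adjustments, $\sum_i \Lambda_{st}^L(c_i) \in V_+(c)\wedge T_cG^n$ and $\sum_i \Lambda_{st}^R(c_i) \in V_-(c)\wedge T_cG^n$, so that $\pist^n(c)$ lies in $V_+(c)\wedge T_cG^n + V_-(c)\wedge T_cG^n$. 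From this I would conclude, via a careful decomposition of $\wedge^2 T_c G^n$ relative to $T_c C_{\bar\bfu}$ and the two vertical subspaces, that the unique elements $\sigma_+(c)$ and $\sigma_-(c)$ of $\wedge^2 T_c C_{\bar\bfu}$ satisfy $\sigma_+(c) + \sigma_-(c) = 0$.

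The principal obstacle I anticipate lies in the bookkeeping for $n \geq 2$: the vertical subspace $V_+(c)$ does not split as a direct sum over factors but takes the coupled form $\{(-l_{c_1}y_1,\: r_{c_2}y_1 - l_{c_2}y_2,\: \ldots,\: r_{c_n}y_{n-1} - l_{c_n}y_n) : y_i \in \b\}$, with analogous couplings for $V_-(c)$. The cleanest route, which I would pursue, is induction on $n$ via the iterated fibre-product descriptions $\tilde F_n \cong \tilde F_{n-1}\times_\sB G$ and $\tilde F_{-n} \cong G\times_{\sB_-}\tilde F_{-(n-1)}$, combined with the recursive mixed-product description of $\tilde\pi_n$ and $\tilde\pi_{-n}$ recalled in $\S$\ref{subsec-mixed-prod}. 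The base case $n = 1$ is a short direct computation along the lines above, and the inductive step reduces to applying the $n = 1$ analysis on the peeled-off last (resp.\ first) factor and invoking the inductive hypothesis on the remaining factors.
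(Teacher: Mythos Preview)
Your reduction to comparing the two pullbacks $\sigma_\pm$ on $C_{\bar\bfu}$ is sound, and your characterisation of $\sigma_\pm(c)$ as the unique element of $\wedge^2 T_cC_{\bar\bfu}$ congruent to $\pist^n(c)$ modulo $V_\pm(c)\wedge T_cG^n$ is correct. The difficulty is in the step you label ``via a careful decomposition'': knowing only that $\pist^n(c)\in V_+(c)\wedge T_cG^n + V_-(c)\wedge T_cG^n$ yields $\sigma_+(c)+\sigma_-(c)\in \wedge^2 T_cC_{\bar\bfu}\cap\bigl(V_+(c)\wedge T_cG^n+V_-(c)\wedge T_cG^n\bigr)$, and this intersection is \emph{not} forced to vanish by the transversalities $T_cC_{\bar\bfu}\cap V_\pm(c)=0$ alone (one can build four--dimensional counterexamples). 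What is missing is precisely the fact that $C_{\bar\bfu}$ is \emph{coisotropic} in $(G^n,\pist^n)$: this is the extra constraint that kills the intersection, and it never appears in your outline. Your inductive route via the mixed--product descriptions of $\tilde\pi_{\pm n}$ would also need to confront this, since those formulas live on $\tilde F_{\pm n}$ rather than on $F_n$ and $F'_{-n}$.

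The paper's proof packages exactly these two inputs --- the Poisson--pair property of $(\varpi_{F_n},\varpi_{F'_{-n}})$, which is stronger than your containment $\pist^n(c)\in V_+\wedge W+V_-\wedge W$, and the coisotropy of $C_{\bar\bfu}$ --- into a single citation of \cite[Lemma~A.1]{Elek-Lu:BS}: whenever $\phi_Y,\phi_Z:(X,\pi_X)\to(Y,\pi_Y),(Z,\pi_Z)$ form a Poisson pair and $X'\subset X$ is coisotropic with $\phi_Y|_{X'}$ a diffeomorphism, the composite $\phi_Z\circ(\phi_Y|_{X'})^{-1}$ is automatically anti--Poisson. Both hypotheses are available from earlier work (\cite[\S8]{Lu-Mou:mixed} and \cite[Lemma~10]{Lu-Mou:double-B-cell}), so the lemma follows in one line. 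If you want to salvage your direct computation, the fix is to invoke coisotropy of $C_{\bar\bfu}$ explicitly; but at that point you are effectively reproving the Elek--Lu lemma by hand.
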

\begin{proof}
Recall that a {\it Poisson pair} is a pair of Poisson maps $\phi_\sY: (X, \piX) \to (Y, \piY)$, $\phi_\sZ: (X, \piX) \to (Z, \piZ)$ between Poisson manifolds such that the map 
$$
\phi: (X, \piX) \to (Y \times Z, \piY \times \piZ), \hs \phi(x) = (\phi_\sY(x), \phi_\sZ(x)), \hs x \in X,
$$
is Poisson. By \cite[Lemma A.1]{Elek-Lu:BS}, if $X'$ is a coisotropic submanifold of $(X, \piX)$ such that $\phi_\sY \! \mid_{X'}: X' \to Y$ is a diffeomorphism, $\phi_\sZ \circ (\phi_\sY \! \mid_{X'})^{-1}: (Y, \piY) \to (Z, \piZ)$ is an anti-Poisson map. By \cite[Section 8]{Lu-Mou:mixed}, $\varpi_{\sF_n}$ and $\varpi_{\sF'_{-n}}$ form a Poisson pair, and $C_{\bbfu}$ is a coisotropic submanifold of $(G^n, \pist^n)$ contained in $G^{u_1, u_1} \times \cdots \times G^{u_n, u_n}$, thus Lemma \ref{lem-isom-I_u} follows by \cite[Lemma A.1]{Elek-Lu:BS}. Note that Lemma \ref{lem-isom-I_u} is proved in \cite[Proposition 5.15]{Elek-Lu:BS} when $\bfu$ consists of simple reflections in $W$. 
\end{proof}

The Poisson action $\lambda_{\sF_n}$ restricts to a Poisson action of $(B, \pist)$ on $(\O^{\bfu}, \pi_n)$, which we denote by $\lambda_\bfu: (B, \pist) \times (\O^{\bfu}, \pi_n) \to (\O^{\bfu}, \pi_n)$, and by Lemma \ref{lem-isom-I_u}, one also has a right action of $(B_-, - \pist)$ on $(\O^\bfu, \pi_n)$ given by 
$$
\rho_\bfu([c]_{\sF_n}, b_-) = I_\bfu( \rho_{\sF'_{-n}}([c]_{\sF'_{-n}}, b_-)), \hs b_- \in B_-, \;  c \in C_{\bbfu}.
$$
Consequently, via the diffeomorphisms $\varpi_{\sF_n} \mid_{C_{\bbfu}}$ and $ \varpi_{\sF'_n} \mid_{C_{\bfu}}$, one has a left action of $B$ on $C_{\bbfu}$ and a right action of $B_-$ on $C_{\bbfu}$, denoted by 
$$
(b, c) \mapsto b[c], \hs \hs (c, b_-) \mapsto c^{b_-}, \hs \hs b \in B, \; b_- \in B_-, \; c \in C_{\bbfu},  
$$
such that 
$$
\lambda_{\bfu}(b, [c]_{\sF_n}) = [b[c]]_{\sF_n} \hs \text{and} \hs \rho_\bfu([c]_{\sF_n}, b_-) = [c^{b_-}]_{\sF_n}, \hs b \in B, \; b_- \in B_-, \; c \in C_{\bbfu}. 
$$

\begin{lem} \label{lem-TO^bfu}
For $z \in \O^{\bfu}$, let $\Sigma_z \subset \O^{\bfu}$ be the $T$-orbit of symplectic leaves of $(\O^{\bfu}, \pi_n)$ containing $z$. Then 
\begin{equation} \label{tgtO^w}
T_z\O^{\bfu} = \lambda_\bfu(\b)(z) + T_z\Sigma_z. 
\end{equation}
\end{lem}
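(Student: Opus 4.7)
The plan is to reduce \eqref{tgtO^w} to an identity linking the infinitesimal $\b$-action with the image of the Poisson anchor $\pi_n^\sharp$, and then to verify this by bringing in the companion right $B_-$-action from Lemma \ref{lem-isom-I_u}.

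The first step is the reduction. Since $\Sigma_z$ is by definition the $T$-orbit of the symplectic leaf $\Sigma_z^{\mathrm{sym}}$ through $z$, and since the $T$-action on $\O^{\bfu}$ coincides with the restriction of $\lambda_\bfu$ to $T\subset B$, the standard fact $T_z\Sigma_z^{\mathrm{sym}}=\pi_n^\sharp(T_z^*\O^{\bfu})$ gives
\[
T_z\Sigma_z \;=\; \pi_n^\sharp(T_z^*\O^{\bfu})\;+\;\lambda_\bfu(\t)(z).
\]
As $\t\subset\b$, the desired identity \eqref{tgtO^w} becomes
\[
T_z\O^{\bfu} \;=\; \lambda_\bfu(\b)(z)\;+\;\pi_n^\sharp(T_z^*\O^{\bfu}).\tag{$\star$}
\]

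To prove $(\star)$, I plan to exploit the right Poisson action $\rho_\bfu$ of $(B_-,-\pist)$ on $(\O^{\bfu},\pi_n)$ extracted from Lemma \ref{lem-isom-I_u}. The plan is to establish
\[
(a)\ \ T_z\O^{\bfu}=\lambda_\bfu(\b)(z)+\rho_\bfu(\b_-)(z),\qquad (b)\ \ \rho_\bfu(\b_-)(z)\subseteq \pi_n^\sharp(T_z^*\O^{\bfu})+\lambda_\bfu(\t)(z),
\]
which together imply $(\star)$. For $(a)$, I will work at the level of $C_{\bbfu}$ via $\varpi_{\sF_n}|_{C_{\bbfu}}\colon C_{\bbfu}\cong\O^{\bfu}$: at a representative $c=(c_1,\ldots,c_n)\in C_{\bbfu}$ of $z$, the infinitesimal $\b$-action through $\lambda_\bfu$ sweeps out the ``$N$-directions'' in each factor $C_{\bar u_i}=N\bar u_i\cap \bar u_i N_-$, while the infinitesimal $\b_-$-action through $\rho_\bfu$ sweeps out the ``$N_-$-directions''; using the two parametrisations of each $C_{\bar u_i}$ and inducting on $n$ (propagating left $B$-perturbations through later factors and right $B_-$-perturbations backward through earlier factors), one checks the joint image is all of $T_cC_{\bbfu}$. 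For $(b)$, the crucial observation is that $\rho_\bfu$ comes, via the anti-Poisson isomorphism $I_\bfu$, from the right Poisson $(B_-,\pist)$-action $\rho_{\sF'_{-n}}$ on $\O'^{-\bfu}\subset F'_{-n}$, which, by the mixed product description of $\tilde\pi_{-n}$ recalled in $\S\ref{subsec-mixed-prod}$, admits an explicit (local) Lu moment map; this exhibits each $\rho_\bfu(\xi)(z)$, for $\xi\in\b_-$, as $\pi_n^\sharp$ applied to a pullback of an invariant one-form on $B$, modulo a correction in $\lambda_\bfu(\t)(z)$ coming from the overlap $\t=\b\cap\b_-$.

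The hardest part will be step $(b)$: producing and verifying the Lu moment map for $\rho_\bfu$ and isolating the $\t$-correction. The main obstacle is not the group-theoretic spanning in $(a)$, which is a relatively clean Bruhat-decomposition computation, but rather the interplay in $(b)$ between the Poisson anchor $\pi_n^\sharp$, the nontrivial multiplicative Poisson structure on $B_-$, and the overlap of the $\b$- and $\b_-$-actions along $\t$; I expect this to require combining the mixed product formula for $\pi_n$ from \cite{Lu-Mou:mixed} with a direct computation of $\rho_\bfu(\xi)$ at the level of $C_{\bbfu}$.
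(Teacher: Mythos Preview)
Your reduction to $(\star)$ is fine, and claim $(a)$ is true and can indeed be proved by a straightforward induction on $n$ using the two parametrisations of each $C_{\bar u_i}$. The fatal problem is claim $(b)$: it is \emph{false}. The right $B_-$-action $\rho_\bfu$ does not in general preserve the $T$-leaves of $(\O^\bfu,\pi_n)$, so $\rho_\bfu(\b_-)(z)\not\subset T_z\Sigma_z$ and no Lu moment map argument can produce the inclusion you want.

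A concrete counterexample: take $G=SL_2(\IC)$, $\bfu=(s,s)$, and coordinates $(a_1,a_2)$ on $\O^{(s,s)}\cong C_{\bar s}\times C_{\bar s}$ with $c_{a_i}=n_{a_i}\bar s$. One computes that $\rho_\bfu$ acts by $(a_1,a_2)\cdot m_t=(a_1,a_2-t)$ for $m_t\in N_-$, so $\rho_\bfu(\n_-)$ spans $\partial_{a_2}$ at every point. On the other hand, the map $\mu_2\colon \O^{(s,s)}\to G/B\cong\IP^1$ sends $(a_1,a_2)\mapsto[a_1a_2-1:a_2]$, and by \cite[Theorem~1.1]{Lu-victor:Tleaves} the $T$-leaf through $z=(1,1)$ is $\mu_2^{-1}([0{:}1])=\{a_1a_2=1\}$, with $T_z\Sigma_z=\mathrm{span}\{\partial_{a_1}-\partial_{a_2}\}$. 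Since $\partial_{a_2}\notin T_z\Sigma_z$, your inclusion $(b)$ fails. The ``Lu moment map'' you hope to extract from the mixed product description of $\tilde\pi_{-n}$ does not exist on $\O^\bfu$ itself; that description lives on $B_-\times\O^\bfu$, and restricting to the slice $\{e\}\times\O^\bfu$ does not yield a moment map because the slice is not Poisson.

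The paper's proof sidesteps this entirely. It uses the $B$-equivariant map $\mu_n\colon F_n\to G/B$, $[g_1,\ldots,g_n]_{\sF_n}\mapsto g_1\cdots g_n.B$, and invokes \cite[Theorem~1.1]{Lu-victor:Tleaves} to get $\mu_n(T_z\Sigma_z)=\lambda_1(\b_-)(\mu_n(z))$. Since $\mu_n$ is $B$-equivariant and $\lambda_1(\b)+\lambda_1(\b_-)$ spans $T_{\mu_n(z)}(G/B)$, the identity \eqref{tgtO^w} follows. The point is that the relevant $\b_-$-action is the one on the target $G/B$, not $\rho_\bfu$ on $\O^\bfu$; these are genuinely different, and only the former is compatible with the $T$-leaf decomposition in the way you need.
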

\begin{proof}
Denote the natural left action of $G$ on $G/B$ by $\lambda_1(g, g'.B) = gg'.B$, $g, g' \in G$, and consider the map 
$$
\mu_n: F_n \to G/B, \hs \mu_n([g_1, \ldots, g_n]_{\scriptscriptstyle F_n}) = g_1g_2 \cdots g_n.B, \hs g_j \in G. 
$$
By \cite[Theorem 1.1]{Lu-victor:Tleaves}, $\mu_n(T_z\Sigma_z) = \lambda_1(\b_-)(\mu_n(z))$. Thus \eqref{tgtO^w} follows, since $\mu_n$ is $B$-equivariant with respect to the actions $\lambda_{\sF_n}$ and $\lambda_1$ of $B$. 
\end{proof}

\subsection{The Poisson structures $\tilde{\pi}_{\pm n}$ on $B \bfu B$ and $B_- \bfu B_-$ as mixed products}

Let $n \geq 1$ and $\bfu \in W^n$ be as in $\S$\ref{subsec-gen-bru-cell} and let
\begin{align*}
B\bfu B & = B u_1 B \times_\sB \cdots \times_\sB B u_n B \subset \tilde{F}_n, \\
B_- \bfu B_- & = B_- u_1  B_- \times_{\sB_-} \cdots \times_{\sB_-} B_- u_n B_- \subset \tilde{F}_{-n}.
\end{align*}
As $(B, B)$- and $(B_-, B_-)$-cosets  in $G$ are Poisson submanifolds of $\pist$, $B \bfu B$ and $B_- \bfu B_-$ are Poisson submanifolds of $(\tilde{F}_n, \tilde{\pi}_n)$ and $(\tilde{F}_{-n}, \tilde{\pi}_{-n})$. Using \eqref{BwBcongCB} inductively, one has diffeomorphisms
\begin{align*}
J^+_{\bar{\bfu}} & : \O^\bfu \times B \to B \bfu B, \hs \hs \;\;\; J^+_{\bar{\bfu}} \left([c_1, \ldots, c_n]_{\scriptscriptstyle F_n}, \: b \right) = [c_1, \ldots, c_nb]_{\scriptscriptstyle \tilde{F}_n},     \\
J^-_{\bar{\bfu}} & : B_- \times \O^\bfu \to B_- \bfu B_-, \hs J^-_{\bar{\bfu}} \left(b_-, \: [c_1, \ldots, c_n]_{\scriptscriptstyle F_n} \right) =  [b_-c_1, \ldots, c_n]_{\scriptscriptstyle \tilde{F}_{-n}}, 
\end{align*}
where $(c_1, \ldots, c_n) \in C_{\bar{\bfu}}$, $b \in B$, and $b_- \in B_-$. Let 
$$
\lambda_+: (B, \pist) \times (B, \pist) \to (B, \pist), \hs \rho_-: (B_-, \pist) \times (B_-, \pist) \to (B_-, \pist),
$$
be respectively the action of $(B, \pist)$ on itself by left multiplication, and the action of $(B_-, \pist)$ on itself by right multiplication. The goal of this subsection is to prove the following 

\begin{pro} \label{lem-J^pm} 
One has 
\begin{align}
(J^+_{\bar{\bfu}})^{-1}(\tilde{\pi}_n) & = \pi_n \times_{(\rho_\bfu, \: \lambda_+)} \pist,  \label{eq-J^+_bfu}    \\
(J^-_{\bar{\bfu}})^{-1}(\tilde{\pi}_{-n}) & =  \pist \times_{(\rho_-, \: \lambda_{\bfu})} (-\pi_n), \label{eq-J^-_bfu} 
\end{align}
where the pair of dual Poisson Lie groups involved in \eqref{eq-J^+_bfu} and \eqref{eq-J^-_bfu} are respectively $((B_-, -\pist), (B, \pist))$ and $((B_-, \pist), (B, -\pist))$. 
\end{pro}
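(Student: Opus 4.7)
The two identities \eqref{eq-J^+_bfu} and \eqref{eq-J^-_bfu} are parallel via the interchange of $B$ and $B_-$, so I focus on \eqref{eq-J^+_bfu} and proceed by induction on $n$.

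For the base case $n = 1$, one has $\tilde{F}_1 \cong G$, $\tilde{\pi}_1 = \pist$, and, after identifying $\O^{u_1}$ with $C_{\bar u_1}$ via $\varpi_{F_1}$, the map $J^+_{\bar u_1}$ is the multiplication $C_{\bar u_1} \times B \to Bu_1 B$ from \eqref{BwBcongCB}. Multiplicativity of $\pist$ gives $\pist(cb) = l_c \pist(b) + r_b \pist(c)$ for $(c, b) \in C_{\bar u_1} \times B$. The first summand contributes the $\pist$ in the second slot of the mixed product. The coisotropy of $C_{\bar u_1}$ in $(G, \pist)$ (used in \cite{Lu-Mou:double-B-cell}) implies that $\pist(c)$ splits, along the decomposition $T_c G = T_c C_{\bar u_1} + r_c T_e B$, into a part tangent to $C_{\bar u_1}$, which descends under $\varpi_{F_1}$ to $\pi_1([c]_{F_1})$, and a transverse part described by the dressing action of $\b_-$ on $G$ restricted to $C_{\bar u_1}$. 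By Lemma \ref{lem-isom-I_u}, this dressing action, transported to $\O^{u_1}$, is precisely $\rho_{u_1}$, so the transverse part is identified with $-(\rho_{u_1}(\xi^i), 0) \wedge (0, \lambda_+(x_i))$, which settles the base case.

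For the inductive step, assume \eqref{eq-J^+_bfu} for $\bfu' = (u_2, \ldots, u_n)$. The $B$-equivariant identifications $\tilde F_n \cong G \times_B \tilde F_{n-1}$ and $F_n \cong G \times_B F_{n-1}$ restrict to
$$
B\bfu B \cong Bu_1 B \times_B B\bfu' B, \qquad \O^\bfu \cong Bu_1 B \times_B \O^{\bfu'},
$$
and are compatible with $J^+$. Combining the base case on the first factor with the inductive hypothesis on the second expresses the pullback of $\tilde\pi_n$ through $J^+_{\bar\bfu}$ in terms of $\pi_1$, $\pi_{n-1}$, $\pist$, and two partial mixing terms. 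The final step is to reassemble these contributions into $\pi_n \times_{(\rho_\bfu, \lambda_+)} \pist$, which reduces to showing that, under the identification $\O^\bfu \cong Bu_1 B \times_B \O^{\bfu'}$, the right action $\rho_\bfu$ of $B_-$ on $\O^\bfu$ factors iteratively: $b_- \in B_-$ acts by the dressing action on the rightmost coordinate $c_n$, and the resulting $B$-component is absorbed via the equivalence relation into $c_{n-1}$, whose modified $B$-component is then propagated leftwards, and so on.

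\textbf{Main obstacle.} The principal technical difficulty is to obtain this explicit, inductive description of $\rho_\bfu$, since it was defined only indirectly through the anti-Poisson isomorphism $I_\bfu$ of Lemma \ref{lem-isom-I_u}. I would derive it by applying \cite[Lemma A.1]{Elek-Lu:BS}, which, together with the coisotropy of $C_{\bbfu}$ in $(G^n, \pist^n)$ established in \cite{Lu-Mou:mixed}, identifies $\rho_\bfu$ with the transfer to $\O^\bfu$ of an iterated right dressing action of $B_-$ on $C_{\bbfu}$. Once this is in hand, the coisotropy of $C_{\bbfu}$ and multiplicativity of $\pist$ force the two sides of \eqref{eq-J^+_bfu} to agree term by term. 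The statement \eqref{eq-J^-_bfu} is then obtained by the same argument with the roles of $B$ and $B_-$ exchanged, with left multiplication replacing right multiplication.
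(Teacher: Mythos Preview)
Your proposal is plausible but takes a genuinely different route from the paper. The paper does \emph{not} argue by induction on $n$. Instead, to prove \eqref{eq-J^-_bfu} (the other case being parallel), it embeds diagonally into the doubled picture: since $g \mapsto (g,g)$ is a Poisson embedding $(G,\pist)\hookrightarrow (G\times G,\Pist)$, each $B_- u_i B_-$ sits diagonally inside $\tilde{\IF}_n$, and one reduces to computing the pushforward of $\tilde\pi_{n,n}$ under a composite $\Phi\circ K_{\bbfu}\circ S_{\scriptscriptstyle\tilde\IF_n}\circ\varpi_{\scriptscriptstyle\tilde\IF_n}$. This is done via three short lemmas: Lemma~\ref{lem-q-pm} (the projections $q^\pm_{\bbfu}$ are Poisson, proved by a one-line induction), Lemma~\ref{lem-K-bfu} (the pushforward of $\tilde\pi_{n,n}$ under $K_{\bbfu}$ is a mixed product), and Lemma~\ref{lem-Phi} (a swap identity between two mixed products). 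The action $\rho_\bfu$ never needs to be described explicitly; it enters only through its defining identification with $\rho_{F'_{-n}}$ via $I_\bfu$.

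By contrast, your approach carries the burden you yourself flag as the ``main obstacle'': to run the induction you must prove an explicit iterative formula for $\rho_\bfu$ showing how the $B_-$-dressing propagates leftwards through the factors of $C_{\bbfu}$, and then match the resulting cross-terms. This can be made to work, and your base case is indeed \cite[Proposition~9]{Lu-Mou:double-B-cell}, but it is heavier than the paper's argument. The paper's trick of passing to the double $(G\times G,\Pist)$ lets it import the mixed-product structure wholesale from \cite[Proposition~8.3]{Lu-Mou:mixed}, so that the only new computation is the elementary Lemma~\ref{lem-Phi}. What your approach buys is a more hands-on understanding of $\rho_\bfu$; what the paper's buys is brevity and a cleaner conceptual reduction.
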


As the proofs of \eqref{eq-J^+_bfu} and \eqref{eq-J^-_bfu} are similar, we will only prove \eqref{eq-J^-_bfu}. The proof of Proposition \ref{lem-J^pm} is completely similar to the proof of \cite[Proposition 9]{Lu-Mou:double-B-cell}. In particular, Lemmas \ref{lem-K-bfu} and \ref{lem-Phi} below are completely analogous to \cite[Proposition 9]{Lu-Mou:double-B-cell} and \cite[Remark 9]{Lu-Mou:double-B-cell}.

\begin{lem} \label{lem-q-pm}
The maps 
\begin{align*}
q_{\bar{\bfu}}^-: & (B_-\bfu B_-, \tilde{\pi}_{-n}) \to (B_-, \pist), \;\; q_{\bar{\bfu}}^-([b_-c_1, \ldots, c_n]_{\tilde{\sF}_{-n}}) = b_-, \;\; (c_1, \ldots, c_n) \in C_{\bar{\bfu}}, \; b_- \in B_-,  \\
q_{\bar{\bfu}}^+: & (B \bfu B, \tilde{\pi}_n) \to (B, \pist), \hs q_{\bar{\bfu}}^+([c_1, \ldots, c_nb]_{\tilde{\sF}_n}) = b, \hs (c_1, \ldots, c_n) \in C_{\bar{\bfu}}, \; b \in B, 
\end{align*}
are Poisson. 
\end{lem}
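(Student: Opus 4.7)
I plan to prove the statement for $q_{\bar{\bfu}}^+$, as $q_{\bar{\bfu}}^-$ is handled by the dual argument sketched at the end. The strategy is to exploit $(B, \pist)$-equivariance of $q_{\bar{\bfu}}^+$ under right multiplication together with coisotropy of the fiber over $e \in B$. Let
\[
\rho: (\tilde{F}_n, \tilde{\pi}_n) \times (B, \pist) \to (\tilde{F}_n, \tilde{\pi}_n), \hs \rho\bigl([g_1, \ldots, g_n]_{\tilde{F}_n},\, b\bigr) = [g_1, \ldots, g_n b]_{\tilde{F}_n},
\]
be the right Poisson action obtained by restricting $\rho_{\tilde{F}_n}$ to the second factor $(B, \pist)$ of $L^*$. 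It preserves $B\bfu B$, and every $x \in B\bfu B$ decomposes uniquely as $x = z \cdot b$, with $z = [c_1, \ldots, c_n]_{\tilde{F}_n} \in [C_{\bar{\bfu}}]_{\tilde{F}_n} = (q_{\bar{\bfu}}^+)^{-1}(e)$ and $b \in B$. Direct inspection from the defining formula of $q_{\bar{\bfu}}^+$ gives the equivariance $q_{\bar{\bfu}}^+(x \cdot b) = q_{\bar{\bfu}}^+(x)\,b$, together with $q_{\bar{\bfu}}^+ \circ l_z = \mathrm{Id}_B$, where $l_z : B \to B\bfu B$ is $b' \mapsto z \cdot b'$.

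Writing $r_b : B\bfu B \to B\bfu B$ for $x \mapsto x \cdot b$, multiplicativity of the right Poisson action $\rho$ gives $\tilde{\pi}_n(x) = r_b \tilde{\pi}_n(z) + l_z \pist(b)$ at $x = z \cdot b$. Combined with the two relations above, this yields
\[
d q_{\bar{\bfu}}^+ \bigl(\tilde{\pi}_n(x)\bigr) = r_b \bigl( dq_{\bar{\bfu}}^+ (\tilde{\pi}_n(z)) \bigr) + \pist(b),
\]
so the lemma for $q_{\bar{\bfu}}^+$ reduces to showing that $dq_{\bar{\bfu}}^+(\tilde{\pi}_n(z)) = 0$ for every $z \in [C_{\bar{\bfu}}]_{\tilde{F}_n}$. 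Since $q_{\bar{\bfu}}^+$ is a surjective submersion with $(q_{\bar{\bfu}}^+)^{-1}(e) = [C_{\bar{\bfu}}]_{\tilde{F}_n}$, the image of $(dq_{\bar{\bfu}}^+|_z)^*$ coincides with the conormal space $N^*_z [C_{\bar{\bfu}}]_{\tilde{F}_n}$, and the desired vanishing follows at once from coisotropy of $[C_{\bar{\bfu}}]_{\tilde{F}_n}$ in $(\tilde{F}_n, \tilde{\pi}_n)$.

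The main technical point, and the only non-mechanical step of the plan, is establishing this coisotropy. The idea is to transfer the coisotropy of $C_{\bar{\bfu}} = C_{\bar{u}_1} \times \cdots \times C_{\bar{u}_n}$ in $(G^n, \pist^n)$---itself a consequence of the coisotropy of each $C_{\bar{u}_i}$ in $(G, \pist)$ noted in the paper from \cite[Lemma 10]{Lu-Mou:double-B-cell}---through the Poisson surjection $\varpi_{\tilde{F}_n}: (G^n, \pist^n) \to (\tilde{F}_n, \tilde{\pi}_n)$. Indeed, for any $\alpha \in N^*_{[c]_{\tilde{F}_n}}[C_{\bar{\bfu}}]_{\tilde{F}_n}$ and any representative $c \in C_{\bar{\bfu}}$, one has $\varpi_{\tilde{F}_n}^*\alpha \in N_c^* C_{\bar{\bfu}}$, and the relation $\tilde{\pi}_n = \varpi_{\tilde{F}_n}(\pist^n)$ gives $\tilde{\pi}_n^\sharp(\alpha) = d\varpi_{\tilde{F}_n} \bigl((\pist^n)^\sharp(\varpi_{\tilde{F}_n}^*\alpha)\bigr) \in d\varpi_{\tilde{F}_n}(T_c C_{\bar{\bfu}}) \subset T_{[c]_{\tilde{F}_n}}[C_{\bar{\bfu}}]_{\tilde{F}_n}$, as needed. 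For $q_{\bar{\bfu}}^-$, the analogous argument uses the first-factor restriction of $\lambda_{\tilde{F}_{-n}}$, i.e.\ the left Poisson action of $(B_-, \pist)$ sending $[g_1, \ldots, g_n]_{\tilde{F}_{-n}}$ to $[b_- g_1, g_2, \ldots, g_n]_{\tilde{F}_{-n}}$, the decomposition $[b_- c_1, \ldots, c_n]_{\tilde{F}_{-n}} = b_- \cdot [c_1, \ldots, c_n]_{\tilde{F}_{-n}}$ with $[c_1, \ldots, c_n]_{\tilde{F}_{-n}} \in (q_{\bar{\bfu}}^-)^{-1}(e) = [C_{\bar{\bfu}}]_{\tilde{F}_{-n}}$, and the coisotropy of $[C_{\bar{\bfu}}]_{\tilde{F}_{-n}}$ in $(\tilde{F}_{-n}, \tilde{\pi}_{-n})$ established analogously via $\varpi_{\tilde{F}_{-n}}$.
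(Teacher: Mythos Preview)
Your proof is correct but follows a different route from the paper. The paper argues by induction on $n$: the base case $n=1$ is \cite[Lemma~11]{Lu-Mou:double-B-cell}, and for $n>1$ one writes $\bfu=(u_1,\bfu')$ and uses a commutative square whose left vertical arrow is the natural Poisson quotient $(B_-u_1B_-,\pist)\times(B_-\bfu'B_-,\tilde\pi_{-(n-1)})\to(B_-\bfu B_-,\tilde\pi_{-n})$, whose top arrow is Poisson by the inductive hypothesis for $q^-_{\bar\bfu'}$ combined with the right Poisson action of $(B_-,\pist)$ on $(B_-u_1B_-,\pist)$, and whose right vertical arrow $q^-_{\bar u_1}$ is the $n=1$ case. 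Your argument is instead direct and non-inductive: equivariance of $q^+_{\bar\bfu}$ under the right $(B,\pist)$-action reduces the statement to the vanishing of $dq^+_{\bar\bfu}(\tilde\pi_n)$ along the fibre over $e$, and this vanishing follows from coisotropy of $[C_{\bar\bfu}]_{\tilde F_n}$, which you deduce from the coisotropy of $C_{\bar\bfu}\subset(G^n,\pist^n)$ via the Poisson submersion $\varpi_{\tilde F_n}$. Your approach is more conceptual and essentially self-contained, relying only on the coisotropy of each $C_{\bar u_i}$ from \cite[Lemma~10]{Lu-Mou:double-B-cell} rather than on the full $n=1$ statement; it also makes transparent the structural reason the map is Poisson. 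The paper's inductive argument is shorter to write once the $n=1$ case is available, and avoids having to identify the fibre explicitly as a coisotropic.
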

\begin{proof}
When $n = 1$, Lemma \ref{lem-q-pm} is proven in \cite[Lemma 11]{Lu-Mou:double-B-cell}, and for $n > 1$, let $\bfu' = (u_2, \ldots, u_n)$. Then the statement for $q_{\bar{\bfu}}^-$ follows by induction as a consequence of the following commutative diagram 
$$
\begin{tikzcd}
(B_-u_1B_-, \pist) \times (B_- \bfu' B_-, \tilde{\pi}_{-(n-1)}) \arrow[d]  \arrow[r] & (B_-u_1B_-, \pist) \arrow[d, "q_{\bar{u}_1}^-"] \\
(B_- \bfu B_-, \tilde{\pi}_{-n})  \arrow[r, "q_{\bar{\bfu}}^-"]  &  (B_-, \pist),  
\end{tikzcd}
$$
where the top arrow is the map 
$$
(g_1, [g_2, \ldots, g_n]_{\tilde{\sF}_{-(n-1)}}) \mapsto g_1 q_{\bar{\bfu}'}^-([g_2, \ldots, g_n]_{\tilde{\sF}_{-(n-1)}}), \hs g_i \in B_-u_iB_-, 
$$
and the first vertical arrow the map
$$
(g_1, [g_2, \ldots, g_n]_{\tilde{\sF}_{-(n-1)}}) \mapsto [g_1, g_2, \ldots, g_n]_{\tilde{\sF}_{-n}}, \hs g_i \in B_-u_iB_-.
$$
The statement for $q_{\bar{\bfu}}^+$ is proven similarly. 
\end{proof}

By definition of $\tilde{\pi}_{n,n}$ it is clear that $\tilde{F}_n \times B_- \bfu B_-$ is a Poisson submanifold of $(\tilde{F}_{n,n}, \tilde{\pi}_{n,n})$. Let $\lambda_-: (B_-, \pist) \times (B_-, \pist) \to (B_-, \pist)$ be the left action of $(B_-, \pist)$ on itself by left multiplication. The following Lemma \ref{lem-K-bfu} is analogous to  \cite[Proposition 9]{Lu-Mou:double-B-cell}. 

\begin{lem} \label{lem-K-bfu}
Let $K_{\bbfu}: \tilde{F}_n \times B_- \bfu B_- \to F_{n} \times B_-$ be the map
$$
K_{\bbfu}([g_1, \ldots, g_n]_{\tilde{\sF}_n}, \: [b_-c_1, \ldots, c_n]_{\tilde{\sF}_{-n}})= ([g_1, \ldots, g_n]_{\sF_n}, b_-), \hs g_i \in G, \; c_i \in C_{\bar{u}_i}, \; b_- \in B_-.
$$
Then one has $K_{\bbfu}(\tilde{\pi}_{n,n}) =  \pi_n \times_{(-\lambda_{\sF_n}, \: \lambda_-)} \pist$, where the pair of dual Poisson Lie groups involved in the mixed product is $((B, -\pist), (B_-, \pist))$.   
\end{lem}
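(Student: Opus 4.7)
The plan is to factor $K_{\bbfu}$ as the composition
$$\tilde{F}_n \times B_-\bfu B_- \xrightarrow{\varpi_{\sF_n} \times \mathrm{id}} F_n \times B_-\bfu B_- \xrightarrow{\mathrm{id} \times q^-_{\bbfu}} F_n \times B_-,$$
and push $\tilde{\pi}_{n,n}$ through each step, using the mixed-product presentation recalled in $\S$\ref{subsec-F_n}, together with Lemma \ref{lem-q-pm} and a few direct tangent-vector computations. For the first step, write $\tilde{\pi}_{n,n} = (\tilde{\pi}_n, \tilde{\pi}_{-n}) - (\rho_{\scriptscriptstyle \tilde{F}_n}(\xi^i), 0) \wedge (0, \lambda_{\scriptscriptstyle \tilde{F}_{-n}}(x_i))$ with dual bases $\{x_i\}$ of $\mathfrak{l} = \b_- \oplus \b_-$ and $\{\xi^i\}$ of $\mathfrak{l}^* = \b \oplus \b$ for the pair $(L, L^*)$, decomposed componentwise as $\{(y_j, 0), (0, y_j)\}$ and $\{(\eta^j, 0), (0, \eta^j)\}$. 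From the defining formulas one reads $\varpi_{\sF_n *}\rho_{\scriptscriptstyle \tilde{F}_n}((\eta, 0)) = -\lambda_{\sF_n}(\eta)$ and $\varpi_{\sF_n *}\rho_{\scriptscriptstyle \tilde{F}_n}((0, \eta)) = 0$, the latter vanishing because the right $B$-action on the last factor of $\tilde F_n$ is quotiented out in $F_n$. Combined with $\varpi_{\sF_n *}\tilde{\pi}_n = \pi_n$, this yields
$$(\varpi_{\sF_n} \times \mathrm{id})_*\tilde{\pi}_{n,n} = (\pi_n, \tilde{\pi}_{-n}) + \sum_j (\lambda_{\sF_n}(\eta^j), 0) \wedge (0, \lambda_{\scriptscriptstyle \tilde{F}_{-n}}((y_j, 0))).$$

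For the second step, Lemma \ref{lem-q-pm} gives $(q^-_{\bbfu})_*\tilde{\pi}_{-n} = \pist$; and for $y \in \b_-$, $b_- \in B_-$, $c_i \in C_{\bar u_i}$, the identity $q^-_{\bbfu}([\exp(ty)b_-c_1, c_2, \ldots, c_n]_{\tilde{\sF}_{-n}}) = \exp(ty)b_-$ gives $(q^-_{\bbfu})_*\lambda_{\scriptscriptstyle \tilde{F}_{-n}}((y, 0)) = \lambda_-(y)$. Combining, one obtains
$$K_{\bbfu *}\tilde{\pi}_{n,n} = (\pi_n, \pist) + \sum_j (\lambda_{\sF_n}(\eta^j), 0) \wedge (0, \lambda_-(y_j)),$$
which is precisely $\pi_n \times_{(-\lambda_{\sF_n}, \lambda_-)} \pist$ once $\{y_j\}$ and $\{\eta^j\}$ are read as dual bases for the pair $((B, -\pist), (B_-, \pist))$.

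The main subtlety is keeping track of sign and duality conventions: one must check that restricting the pairing between $\mathfrak{l}^*$ and $\mathfrak{l}$ to its first summand agrees with the pairing making $((B, -\pist), (B_-, \pist))$ a pair of dual Poisson Lie groups, so that the surviving half of the mixed term reassembles correctly as the new mixed product. Beyond this bookkeeping, every step reduces to a direct pushforward computation from the defining formulas, paralleling the $n=1$ argument in \cite{Lu-Mou:double-B-cell}.
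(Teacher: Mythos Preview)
Your proof is correct and follows essentially the same approach as the paper's: both arguments write $\tilde{\pi}_{n,n}$ in mixed product form, split the mixed term according to the two summands of $\mathfrak{l}^* = \b \oplus \b$, observe that the piece coming from the second summand is killed by the quotient to $F_n$, and use Lemma \ref{lem-q-pm} to handle $\tilde{\pi}_{-n}$. Your explicit factorization of $K_{\bbfu}$ through $\varpi_{\sF_n} \times \mathrm{id}$ and $\mathrm{id} \times q^-_{\bbfu}$ is a minor presentational variant of the paper's one-step pushforward, not a different route; the sign and duality bookkeeping you flag is indeed the only point requiring care, and the paper handles it by writing out the two pieces $\mu_1$, $\mu_2$ directly with the correct signs on the basis elements.
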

\begin{proof}
By definition of $\tilde{\pi}_{n,n}$, one has $\tilde{\pi}_{n,n} = (\tilde{\pi}_n, 0) + (0, \tilde{\pi}_{-n}) - \mu_1  - \mu_2$, where
$$
\mu_1 = \rho_{\tilde{\sF}_n}(x_i, 0) \wedge \lambda _{\tilde{\sF}_{-n}}(\xi^i, 0), \hs \text{and} \hs \mu_2 =  \rho_{\tilde{\sF}_n}(0, -x_i) \wedge \lambda _{\tilde{\sF}_{-n}}(0, \xi^i),
$$
where $(x_i)$ is any basis of $\b$ and $(\xi^i)$ the dual basis of $\b_-$ with respect to $\lara_{(\b, \b_-)}$. By Lemma \ref{lem-q-pm} one has $K_{\bbfu}(0, \tilde{\pi}_{-n}) = (0, \pist)$, and by definition of $K_{\bbfu}$, one has $K_{\bbfu}(\tilde{\pi}_n, 0) = (\pi_n, 0)$, $K_{\bbfu}(\mu_2) = 0$, and $K_{\bbfu}(\mu_1)$ coincides with the mixed term of $\pi_n \times_{(-\lambda_{\sF_n}, \: \lambda_-)} \pist$. This proves Lemma \ref{lem-K-bfu}. 
\end{proof}

The following Lemma \ref{lem-Phi} is a straightforward calculation completely similar to \cite[Remark 9]{Lu-Mou:double-B-cell}. 

\begin{lem} \label{lem-Phi}
Let $\Phi: F_n \times B_- \to B_- \times  F_n$ be the map 
$$
\Phi([g_1, \ldots, g_n]_{\sF_n}, \: b_-) = (b_-, \: [b_-^{-1}g_1, g_2, \ldots, g_n]_{\sF_n}), \hs g_i \in G, \; b_- \in B_-.
$$
Then one has $\Phi(\pi_n \times_{(-\lambda_{\sF_n}, \: \lambda_-)} \pist) = \pist \times_{(\rho_-, \lambda_{\sF_n})} (-\pi_n)$. 
\end{lem}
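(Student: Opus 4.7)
The plan is to verify the identity $\Phi_\ast(\pi_n \times_{(-\lambda_{\sF_n},\lambda_-)}\pist) = \pist \times_{(\rho_-, \lambda_{\sF_n})}(-\pi_n)$ by a direct pushforward computation. I would factor $\Phi$ as the composition $\Phi = N \circ \sigma$, where $\sigma \colon F_n \times B_- \to B_- \times F_n$ is the coordinate swap and $N \colon B_- \times F_n \to B_- \times F_n$ is the ``left-translation twist" $N(b_-, [g]_{\sF_n}) = (b_-, [b_-^{-1}g_1, g_2, \ldots, g_n]_{\sF_n})$. The pushforward $\sigma_\ast$ is immediate: the direct product piece $(\pi_n, \pist)$ on $F_n \times B_-$ becomes $(\pist, \pi_n)$ on $B_- \times F_n$, and since a mixed wedge $(V, 0) \wedge (0, W)$ acquires a sign under the factor swap, the mixed term of the LHS is transformed into $-(\lambda_-(x_i), 0) \wedge (0, \lambda_{\sF_n}(\xi^i))$ on $B_- \times F_n$, where $(x_i)$ and $(\xi^i)$ are dual bases of $\b_-$ and $\b$ with respect to $\lara_{(\b, \b_-)}$.

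The core of the proof is the pushforward $N_\ast$. I would express $N(b_-, z) = (b_-, m(\iota(b_-), z))$, where $\iota$ is inversion on $B_-$ and $m \colon B_- \times F_n \to F_n$ is the restriction to $B_-$ of the natural left $G$-action on $F_n$; this action is Poisson because it descends from the multiplicativity of $\pist$ on $G$ together with the fact that $B_-$ is a Poisson Lie subgroup of $(G, \pist)$. Two infinitesimal ingredients enter the calculation: the standard identity $\iota_\ast \pist = -\pist$ coming from multiplicativity, and the Poisson-action identity $\pi_n(b_-^{-1} z) = L_{b_-^{-1}}\pi_n(z) + R_z\, \pist(b_-^{-1})$, where $R_z \colon B_- \to F_n$, $b \mapsto bz$, is the orbit map at $z$. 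Applying $N_\ast$ term by term to $(\pist, \pi_n)$ and to the mixed wedge produced by $\sigma_\ast$, and expanding using these identities together with the formulas $\lambda_-(x) = x^R$ and $\rho_-(\xi) = \xi^L$ on $B_-$ and the fact that $d\iota$ exchanges right-invariant with left-invariant vector fields up to a sign, all contributions reorganize to produce exactly $(\pist, -\pi_n) - (\rho_-(x_i), 0) \wedge (0, \lambda_{\sF_n}(\xi^i))$, which is the expansion of the RHS.

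The main obstacle is the sign and index bookkeeping, in particular tracking how the minus sign on $\pi_n$ in the target arises from the combined effect of $\iota_\ast \pist = -\pist$ and the correction term $R_z\, \pist(b_-^{-1})$ in the Poisson-action identity, and how the inversion $\iota$ intertwines the infinitesimal right action $\rho_-$ on $B_-$ with the left action $\lambda_-$. The calculation is formally identical to the $n = 1$ case treated in \cite[Remark 9]{Lu-Mou:double-B-cell}, where $F_n$ is replaced by the flag variety $G/B$; the same manipulations apply here because $\pi_n$, $\lambda_{\sF_n}$, and the $G$-action on $F_n$ satisfy the same infinitesimal identities as their counterparts on $G/B$.
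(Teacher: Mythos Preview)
Your proposal is correct and matches the paper's approach: the paper does not give a proof at all, stating only that the lemma ``is a straightforward calculation completely similar to \cite[Remark 9]{Lu-Mou:double-B-cell},'' and your factorisation $\Phi = N \circ \sigma$ together with the Poisson-action identity and $\iota_\ast\pist = -\pist$ is precisely the computation carried out in that reference for the $n=1$ case. The only cosmetic point is that your labelling of the dual bases (with $(x_i)$ in $\b_-$ and $(\xi^i)$ in $\b$) is swapped relative to the paper's convention, but this is harmless.
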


Let $\Lambda_{\g_{diag}, \g'} \in \wedge^2 (\gog)$ be the skew-symmetric $r$-matrix associated to the Lagrangian splitting $\gog = \g_{diag} + \g'$, and let $\Pist = \Lambda_{\g_{diag}, \g'}^L - \Lambda_{\g_{diag}, \g'}^R \in \XX^2(G \times G)$ be the corresponding holomorphic multiplicative Poisson structure on $G \times G$. Then the Poisson structure $\Pist^n$ on $(G \times G)^n$ descends to a well defined Poisson structure $\pi_{\scriptscriptstyle \tilde{\IF}_n} = \varpi_{\scriptscriptstyle \tilde{\IF}_n}(\Pist^n)$ on $\tilde{\IF}_n$, where 
$$
\tilde{\IF}_n = (G \times G) \times_{\sB \times \sB_-} \cdots \times_{\sB \times \sB_-} (G \times G), 
$$
and by \cite[Proposition 8.3]{Lu-Mou:mixed} the diffeomorphism $S_{\scriptscriptstyle \tilde{\IF}_n}: \tilde{\IF}_n \to \tilde{F}_{n, n}$, 
$$
S_{\scriptscriptstyle \tilde{\IF}_n}([(g_1, h_1), \ldots, (g_n, h_n)]_{\scriptscriptstyle \tilde{\IF}_n}) = \left( [g_1, \ldots, g_n]_{\tilde{\sF}_n}, \;  [h_1, \ldots, h_n]_{\tilde{\sF}_{-n}} \right), \hs g_i, h_i \in G,
$$
is a Poisson isomorphism between $(S_{\scriptscriptstyle \tilde{\IF}_n}, \pi_{\scriptscriptstyle \tilde{\IF}_n})$ and $(\tilde{F}_{n,n}, \tilde{\pi}_{n,n})$.

\noindent
{\it Proof of Proposition \ref{lem-J^pm}.}
As $g \mapsto (g, g)$, $g \in G$ is a Poisson embedding of $(G, \pist)$ into $(G \times G, \Pist)$ and as $B_-u_i B_-$ is a Poisson submanifold of $(G, \pist)$, \eqref{eq-J^-_bfu} is now a consequence of Lemmas \ref{lem-K-bfu} and \ref{lem-Phi}, and of the following commutative diagram 
$$
\begin{tikzcd}
B_-u_1B_- \times \cdots \times B_- u_n B_- \arrow[d, "\varpi_{\tilde{\sF}_{-n}}"]  \arrow[r] & (B_-u_1B_-)_{diag} \times \cdots \times (B_- u_n B_-)_{diag} \arrow[d, "\Phi \circ K_{\bbfu} \circ S_{\scriptscriptstyle \tilde{\IF}_n} \circ\varpi_{\scriptscriptstyle \tilde{\IF}_n}"] \\
B_- \bfu B_-  \arrow[r, "(J^-_{\bar{\bfu}})^{-1}"]  & B_- \times \O^\bfu,  
\end{tikzcd}
$$
where the top arrow is the map $g \mapsto (g, g)$, $g \in B_-u_iB_-$, applied on each factor. 
\qed

\subsection{The Poisson structure $\tilde{\pi}_{n,n}$ on $B \bfu B \times B_- \bfv B_-$ as a mixed product}

Let $n \geq 1$ and $\bfu \in W^n$ be as in $\S$\ref{subsec-gen-bru-cell}. For $c \in C_{\bar{\bfu}}$, $b \in B$ and $b_- \in B_-$, and recalling the notation in \eqref{eq-ddot}, let 
\begin{equation}
b_{\bbfu}(b, c) \in B \hs \text{and} \hs b_{-\bbfu}(b_-, c) \in B_-
\end{equation}
be the well defined elements such that 
$$
b \underline{c} = \underline{b[c]} \: b_{\bbfu}(b, c) , \hs \text{and} \hs \underline{c}b_- = b_{-\bbfu}(b_-, c) \underline{c^{b_-}}. 
$$
Recalling the groups $L$ and $L^*$ defined in \eqref{LL^*}, one has, via the diffeomorphisms $J^\pm_{\bbfu}$ the right action of $L^*$ on $\O^\bfu \times B$ and the left action of $L$ on $B_- \times \O^\bfu$ given by 
\begin{align*}
\tilde{\rho}_{\bbfu} \left( ([c]_{\scriptscriptstyle F_n}, b), (b_1, b_2) \right) & = (J^+_{\bbfu}
)^{-1} \left( \rho_{\scriptscriptstyle \tilde{F}_n} \left( J^+_{\bbfu}([c]_{\scriptscriptstyle F_n}, b), (b_1, b_2) \right) \right),   \\
\tilde{\lambda}_{\bbfu} \left((b_{-1}, b_{-2}), (b_-, [c]_{\scriptscriptstyle F_n}) \right) & = (J^-_{\bbfu})^{-1} \left( \lambda_{\scriptscriptstyle \tilde{F}_{-n}} \left(  (b_{-1}, b_{-2}), J^-_{\bbfu}(b_-, [c]_{\scriptscriptstyle F_n}) \right) \right),
\end{align*}
where $c \in C_{\bar{\bfu}}$, $b, b_1, b_2 \in B$, and $b_-, b_{-1}, b_{-2} \in B_-$. The next Lemma \ref{lem-rho_tilde} is straightforward. 

\begin{lem} \label{lem-rho_tilde}
One has 
\begin{align*}
\tilde{\rho}_{\bbfu} \left( ([c]_{\sF_n}, b), (b_1, b_2) \right) & = \left( \lambda_\bfu(b_1^{-1}, [c]_{\sF_n}), \; b_{\bbfu}(b_1^{-1}, c)bb_2 \right),   \\
\tilde{\lambda}_{\bbfu} \left((b_{-1}, b_{-2}), (b_-, [c]_{\sF_n}) \right) & = \left(b_{-1}b_-b_{-\bbfu}(b_{-2}^{-1}, c), \; \rho_\bfu([c]_{\sF_n}, b_{-2}^{-1} \right). 
\end{align*}
\end{lem}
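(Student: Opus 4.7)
The plan is to verify both identities by directly unwinding the definitions of $\tilde{\rho}_{\bbfu}$, $\tilde{\lambda}_{\bbfu}$, and $J^\pm_{\bbfu}$, using the fiber product relations defining $\tilde{F}_{\pm n}$ together with repeated application of the decompositions
$$
b\, \underline{c} = \underline{b[c]}\, b_{\bbfu}(b, c), \qquad \underline{c}\, b_- = b_{-\bbfu}(b_-, c)\, \underline{c^{b_-}}.
$$
Only the first identity is treated here; the second is entirely analogous, with left and right interchanged and the roles of $B$ and $B_-$ swapped.

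For $c = (c_1, \ldots, c_n) \in C_{\bbfu}$ and $(b_1, b_2) \in B \times B$, the definitions of $\rho_{\scriptscriptstyle \tilde{F}_n}$ and $J^+_{\bbfu}$ give
$$
\rho_{\scriptscriptstyle \tilde{F}_n}\bigl(J^+_{\bbfu}([c]_{\sF_n}, b),\, (b_1, b_2)\bigr) = [b_1^{-1} c_1,\, c_2, \ldots, c_{n-1},\, c_n b b_2]_{\scriptscriptstyle \tilde{F}_n}.
$$
Iterating the decomposition \eqref{BwBcongCB} from $i = 1$ upwards, I set $b'_0 = b_1^{-1}$ and define $c'_i \in C_{\bar{u}_i}$, $b'_i \in B$ by the relations $b'_{i-1}\, c_i = c'_i\, b'_i$ for $1 \leq i \leq n$. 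Chaining these identities yields $b_1^{-1} \underline{c} = \underline{c'}\, b'_n$, where $c' = (c'_1, \ldots, c'_n)$. Uniqueness of the decomposition $b_1^{-1} \underline{c} = \underline{b_1^{-1}[c]}\, b_{\bbfu}(b_1^{-1}, c)$ then forces $c' = b_1^{-1}[c]$ and $b'_n = b_{\bbfu}(b_1^{-1}, c)$.

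The remaining step is to slide each intermediate $b'_i \in B$ across the fiber product using the relation $[\ldots, g_i b, b^{-1} g_{i+1}, \ldots]_{\scriptscriptstyle \tilde{F}_n} = [\ldots, g_i, g_{i+1}, \ldots]_{\scriptscriptstyle \tilde{F}_n}$, which rewrites the above expression as
$$
[c'_1, c'_2, \ldots, c'_{n-1},\, c'_n\, b'_n\, b\, b_2]_{\scriptscriptstyle \tilde{F}_n} = J^+_{\bbfu}\bigl([c']_{\sF_n},\, b_{\bbfu}(b_1^{-1}, c)\, b\, b_2\bigr).
$$
Applying $(J^+_{\bbfu})^{-1}$ and recognising $[c']_{\sF_n} = [b_1^{-1}[c]]_{\sF_n} = \lambda_{\bfu}(b_1^{-1}, [c]_{\sF_n})$ yields the first claimed formula. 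The argument is purely bookkeeping and no real obstacle is anticipated; the only point to keep track of is the order in which the fiber product relations are applied. The second identity follows by the symmetric calculation, starting from $\lambda_{\scriptscriptstyle \tilde{F}_{-n}}\bigl((b_{-1}, b_{-2}), J^-_{\bbfu}(b_-, [c]_{\sF_n})\bigr) = [b_{-1} b_- c_1,\, c_2, \ldots, c_n b_{-2}^{-1}]_{\scriptscriptstyle \tilde{F}_{-n}}$ and propagating $b_{-2}^{-1}$ from right to left through the $\tilde{F}_{-n}$-fiber product using the second decomposition above.
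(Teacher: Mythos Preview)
Your proof is correct. The paper does not provide a proof of this lemma, simply declaring it ``straightforward''; your argument is exactly the direct unwinding of definitions that this remark implicitly invokes, and all the bookkeeping is handled correctly.
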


Let $\bfv \in W^n$ and let $\bbfv \in N\sG(T)^n$ be a representative of $\bfv$. Since $B \bfu B \times B_- \bfv B_-$ is a Poisson submanifold of $(\tilde{F}_{n,n}, \tilde{\pi}_{n,n})$, let 
$$
J_{\bbfu, \bbfv}  = J^+_{\bbfu} \times J^-_{\bbfv}: \: \O^\bfu \times B \times B_- \times \O^\bfv \longrightarrow B \bfu B \times B_- \bfv B_-,
$$
and $\pi_{\bbfu, \bbfv} = (J_{\bbfu, \bbfv})^{-1}(\tilde{\pi}_{n,n})$. By Proposition \ref{lem-J^pm}, one has 
$$
\pi_{\bbfu, \bbfv} = \left( \pi_n \times_{(\rho_\bfu, \: \lambda_+)} \pist \right) \times_{(\tilde{\rho}_{\bbfu}, \: \tilde{\lambda}_{\bbfv})} \left( \pist \times_{(\rho_-, \: \lambda_\bfv)} (-\pi_n)  \right).  
$$
Let $(x_i)$ be a basis of $\b$ and $(\xi^i)$ the dual basis of $\b_-$ with respect to the bilinear form $\lara_{(\b, \b_-)}$. In details, one has 
\begin{align}
\pi_{\bbfu, \bbfv} = &\:  (\pi_n, \pist, \pist, -\pi_n)  \label{eq-pi_uu}  \\
   & - (\rho_\bfu(\xi^i), 0, 0, 0) \wedge (0, (x_i)^R, 0, 0) + (0, 0, (\xi^i)^L, 0) \wedge (0, 0, 0, \lambda_\bfv(x_i)) \notag  \\
    & - (\lambda_\bfu(x_i), b_{\bbfu}(x_i)^R, 0, 0) \wedge (0, 0, (\xi^i)^R, 0) + (0, (x_i)^L, 0, 0) \wedge (0, 0, b_{-\bbfv}(\xi^i)^L, \rho_\bfv(\xi^i)),   \notag
\end{align}
where 
$$
b_{\bar{\bfu}}(x_i) = \frac{d}{dt} \mid_{t = 0} b_{\bar{\bfu}}(\exp(tx_i), c) \in \b, \hs \text{and} \hs b_{-\bbfv}(\xi^i) = \frac{d}{dt} \mid_{t = 0} b_{-\bbfv}(\exp(t\xi^i), c_-) \in \b_-.
$$

\subsection{Generalised double Bruhat cells as Lie groupoids} \label{subsec-gpoid-Guu}

Let $n \geq 1$ and $\bfu, \bfv \in W^n$. The submanifold 
\begin{align*}
G^{\bfu, \bfv} = \{ &  \left( [g_1, \ldots, g_n]_{\tilde{\sF}_n}, \; [h_1, \ldots, h_n]_{\tilde{\sF}_{-n}} \right) \in B \bfu B \times B_- \bfv B_- :   \\
  & g_1 \cdots g_n = h_1 \cdots h_n, \hs g_j, h_j \in G \}  
\end{align*}
of $\tilde{F}_{n,n}$ is called a {\it generalised double Bruhat cell} in \cite{Lu-Mou:double-B-cell}, and it is shown therein that it is a Poisson submanifold for $\tilde{\pi}_{n,n}$, consisting of a unique $T$-orbit of symplectic leaves. When $n = 1$, $(G^{\bfu, \bfv}, \pi_{1,1})$ is naturally isomorphic to the double Bruhat cell $(G^{u, v}, \pist)$, if $\bfu = (u)$, $\bfv = (v)$. Fixing representatives $\bbfu, \bbfv \in N_\sG(T)^n$ for $\bfu$, $\bfv$, let 
$$
\G^{\bbfu, \bbfv}  = (J_{\bbfu, \bbfv})^{-1}(G^{\bfu, \bfv}) \subset  \O^\bfu \times B \times B_- \times \O^\bfv. 
$$
Recalling the notation introduced in \eqref{eq-ddot}, one has 
\begin{align} 
\G^{\bbfu, \bbfv}  & =  \{\left( [c]_{\sF_n}, \; b, \; b_-, \; [c_-]_{\sF_n} \right) \in \O^\bfu \times B \times B_- \times \O^\bfv: \;  \underline{c}b = b_-\underline{c_-}, \label{eq-G^uu} \\
   & \hs \hs c \in C_{\bbfu}, \;  c_- \in C_{\bbfv}, \; b \in B, \; b_- \in B_- \},  \notag
\end{align}
and $\G^{\bbfu, \bbfv}$ is a Poisson submanifold of $(\O^\bfu \times B \times B_- \times \O^\bfv, \: \pi_{\bbfu, \bbfv})$. Furthermore, when $\bfu = \bfv$, $\G^{\bar{\bfu}, \bar{\bfu}}$ has a structure of a groupoid over $\O^\bfu$ given by 

\begin{align*}
&\mbox{source map}: \; \theta_{\bar{\bfu}}([c]_{\sF_n}, \: b, \: b_-, \: [c_-]_{\sF_n}) = [c]_{\sF_n},  \\
&\mbox{target map}: \; \tau_{\bar{\bfu}}([c]_{\sF_n}, \: b, \: b_-, \: [c_-]_{\sF_n}) = [c_-]_{\sF_n},  \\
&\mbox{identity bisection}: \; \varepsilon_{\bar{\bfu}}([c]_{\sF_n}) = ([c]_{\sF_n}, \:e, \: e, \: [c]_{\sF_n})  \\
&\mbox{inverse map}: \; \iota_{\bar{\bfu}}([c]_{\sF_n},\: b, \: b_-, \: [c_-]_{\sF_n}) = ([c_-]_{\sF_n}, \:b^{-1}, \: b_-^{-1}, \: [c]_{\sF_n})  \\
&\mbox{multiplication}: \; \mbox{when} \;  [c_-]_{\sF_n} =  [c']_{\sF_n},  \\ 
& ([c]_{\sF_n}, \: b, \: b_-, \: [c_-]_{\sF_n}) ([c']_{\sF_n}, \: b', \: b'_-, \: [c'_-]_{\sF_n}) = ([c]_{\sF_n}, \: bb', \: b_-b'_-, \: [c'_-]_{\sF_n}). 
\end{align*}

\begin{pro} \label{lem-theta-submersion}
The above maps define a Lie groupoid structure on $\G^{\bar{\bfu}, \bar{\bfu}}$. 
\end{pro}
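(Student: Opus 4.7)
The plan is to verify four things in turn: that $\G^{\bbfu, \bbfu}$ is a smooth submanifold, that the structure maps are well-defined and smooth, that the algebraic groupoid axioms hold, and that $\theta_{\bbfu}$ is a submersion. The first point follows immediately from $\G^{\bbfu, \bbfu} = J_{\bbfu, \bbfu}^{-1}(G^{\bfu, \bfu})$ together with the fact, established in \cite{Lu-Mou:mixed}, that $G^{\bfu, \bfu}$ is a smooth submanifold of $\tilde{F}_{n,n}$. Since $J_{\bbfu, \bbfu}$ is a diffeomorphism, $\G^{\bbfu, \bbfu}$ is a smooth submanifold of $\O^{\bfu} \times B \times B_- \times \O^{\bfu}$ of dimension $2l(\bfu) + r$, where $l(\bfu) = l(u_1) + \cdots + l(u_n)$ and $r = \dim T$.

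For well-definedness of the multiplication, the key observation is that $\varpi_{\sF_n}|_{C_{\bbfu}} : C_{\bbfu} \to \O^{\bfu}$ is a bijection, so the matching condition $[c_-]_{\sF_n} = [c']_{\sF_n}$ for composable elements forces $c_- = c'$ in $C_{\bbfu}$. Then, using $\underline{c}b = b_-\underline{c_-}$ and $\underline{c_-}b' = b'_-\underline{c'_-}$, one computes $\underline{c}(bb') = b_-\underline{c_-}b' = b_-b'_-\underline{c'_-}$, confirming that the product lies in $\G^{\bbfu, \bbfu}$. Well-definedness of $\iota_{\bbfu}$ reduces to the identity $\underline{c_-}b^{-1} = b_-^{-1}\underline{c}$, obtained by rearranging $\underline{c}b = b_-\underline{c_-}$. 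Smoothness of all structure maps is immediate from the explicit formulas, and the associativity and identity/inverse laws reduce to the corresponding axioms in $B$ and $B_-$.

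The delicate remaining point is showing that $\theta_{\bbfu}$ is a submersion. Since $\varepsilon_{\bbfu}$ is a smooth section of $\theta_{\bbfu}$, the submersion property holds at every point of the identity bisection. To transport this to an arbitrary $\gamma_0 \in \G^{\bbfu, \bbfu}$, I would exploit the bijection $L_{\gamma_0} : \theta_{\bbfu}^{-1}(\tau_{\bbfu}(\gamma_0)) \to \theta_{\bbfu}^{-1}(\theta_{\bbfu}(\gamma_0))$ given by left multiplication, with inverse $L_{\iota_{\bbfu}(\gamma_0)}$, which carries $\varepsilon_{\bbfu}(\tau_{\bbfu}(\gamma_0))$ to $\gamma_0$. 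This shows that all source fibers are mutually diffeomorphic, and since the one through the identity has the expected codimension $\dim \O^{\bfu}$, the same holds for all source fibers, making $\theta_{\bbfu}$ a submersion.

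The main obstacle, I expect, is carefully executing this transport argument: while left multiplication is clearly a set-theoretic bijection between source fibers, one must confirm that each source fiber is a smoothly embedded submanifold of $\G^{\bbfu, \bbfu}$ of the correct constant dimension. An alternative, more hands-on approach is to verify directly that the map
$$
F : C_{\bbfu} \times B \times B_- \times C_{\bbfu} \to G, \qquad F(c, b, b_-, c_-) = b_-^{-1}\underline{c}\,b\,\underline{c_-}^{-1},
$$
which cuts out $\G^{\bbfu, \bbfu}$ as $F^{-1}(e)$ under the identification $C_{\bbfu} \cong \O^{\bfu}$, is a submersion along this level set. A tangent space calculation at a general point would then show directly that $d\theta_{\bbfu}$ is surjective, yielding the submersion property without reliance on translation.
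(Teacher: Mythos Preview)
Your handling of the algebraic groupoid axioms and smoothness of the structure maps is fine, and matches what the paper dismisses in one line. The substantive point, as you correctly identify, is the submersion property of $\theta_{\bbfu}$ (and $\tau_{\bbfu}$), and here your proposal has a genuine gap.

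The translation argument (a) is circular as stated. Left multiplication $L_{\gamma_0}$ is only defined on the source fiber $\theta_{\bbfu}^{-1}(\tau_{\bbfu}(\gamma_0))$, and to speak of it as a \emph{diffeomorphism} between source fibers you already need those fibers to be smooth submanifolds of the expected dimension, i.e.\ you need $\theta_{\bbfu}$ to be a submersion. There is no ambient diffeomorphism of $\G^{\bbfu,\bbfu}$ carrying $\varepsilon_{\bbfu}(\tau_{\bbfu}(\gamma_0))$ to $\gamma_0$ and commuting with $\theta_{\bbfu}$ (this is exactly what bisections provide, but their existence through an arbitrary point again presupposes that source and target are submersions). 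Even granting that all fibers are in set-theoretic bijection with one of the correct dimension, this does not force $d\theta_{\bbfu}$ to be surjective at every point. Your fallback (b) is in principle workable but you have not indicated how the computation goes, and showing directly that $\b + \Ad_{\underline{c_-}^{-1}}\b_- + l_{\underline{c_-}}^{-1}d\underline{c_-}(T_{c_-}C_{\bbfu})$ spans $\g$ is not obvious.

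The paper's argument is entirely different and exploits the Poisson geometry. Since $\G^{\bbfu,\bbfu}$ is a Poisson submanifold, the bivector $\pi_{\bbfu,\bbfu}$ is tangent to it; contracting with cotangent vectors of the form $(\alpha,0,r_{b_-^{-1}}^*x,0)$ and using the explicit formula \eqref{eq-pi_uu}, one computes
\[
\theta_{\bbfu}\bigl(\pi_{\bbfu,\bbfu}^\sharp(\alpha,0,r_{b_-^{-1}}^*x,0)\bigr)=\pi_n^\sharp(\alpha)-\lambda_{\bfu}(x)([c]_{\sF_n}).
\]
As $\alpha$ ranges over $T_{[c]}^*\O^{\bfu}$ and $x$ over $\b$, the right-hand side sweeps out $\mathrm{Im}(\pi_n^\sharp)+\lambda_{\bfu}(\b)([c])$, and Lemma~\ref{lem-TO^bfu} says precisely that this equals $T_{[c]}\O^{\bfu}$. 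Thus surjectivity of $d\theta_{\bbfu}$ at an arbitrary point is obtained in one stroke, with the geometric input coming from the description of $T$-leaves in $(\O^{\bfu},\pi_n)$ rather than from any groupoid-theoretic translation.
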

\begin{proof}
It is clear that the above maps define a set theoretic groupoid structure on $\G^{\bar{\bfu}, \bar{\bfu}}$ and that the identity bisection and the multiplication are holomorphic maps. Thus one only needs to check that $\theta_{\bar{\bfu}}$ and $\tau_{\bar{\bfu}}$ are submersions. 

Let $([c]_{\sF_n}, b, b_-, [c_-]_{\sF_n}) \in \G^{\bar{\bfu}, \bar{\bfu}}$ with $c, c_- \in C_{\bar{\bfu}}$, $b \in B$, $b_- \in B_-$, and let $\alpha \in T^*_{[c]_{\scriptscriptstyle F_n}}\O^\bfu$, and $x \in \b$. Viewing $x$ as a linear form on $\b_-$ via the pairing $\lara_{(\b, \b_-)}$ and using \eqref{eq-pi_uu} , one has 
\begin{align*}
\theta_\bfu \left((\pi_{\bar{\bfu}, \bar{\bfu}})^\sharp(\alpha, 0, r^*_{b_-^{-1}}x, 0) \right) & = (\pi_n)^\sharp(\alpha) - \lambda_\bfu(x)([c]_{\scriptscriptstyle F_n}),
\end{align*} 
thus by Lemma \ref{lem-TO^bfu}, $\theta_{\bar{\bfu}}$ is a submersion. Similarly, $\tau_{\bar{\bfu}}$ is also a submersion. 
\end{proof}

When $n = 1$, The groupoid structure on $\G^{\bbfu, \bbfu}$ coincides with the groupoid structure on the double Bruhat cell $G^{u,u}$ introduced in \cite{Lu-Mou:double-B-cell}, where $\bfu = (u)$.

\section{Poisson action of a double symplectic groupoid on generalised double Bruhat cells} \label{sec-lhd-BB_-}

Let $(G, \pist)$ be a standard complex semisimple Poisson Lie group as in $\S$\ref{sec-std-PL}, let $n \geq 1$, $\bfu, \bfv \in W^n$, and fix representatives $\bbfu, \bbfv \in N_\sG(T)^n$ of $\bfu, \bfv$. We construct in this section right Poisson actions of the symplectic groupoids $(\Gamma_\sB \rightrightarrows B, -\pi_\sGam)$ and $(\Gamma_{\sB_-} \rightrightarrows B_-, \pi_\sGam)$ on the Poisson maps 
\begin{align*}
\mu_+: (\G^{\bbfu, \bbfv}, \: \pi_{\bbfu, \bbfv}) \to (B, \pist), & \hs \mu_+([c]_{\sF_n}, \: b, \: b_-, \: [c_-]_{\sF_n}) = b,   \\
\mu_-: (\G^{\bbfu, \bbfv}, \: \pi_{\bbfu, \bbfv}) \to (B_-, \pist), & \hs \mu_-([c]_{\sF_n}, \: b, \: b_-, \: [c_-]_{\sF_n}) = b_-,  
\end{align*}
where $([c]_{\sF_n}, b,  b_-,  [c_-]_{\sF_n}) \in \G^{\bbfu, \bbfv}$.

\subsection{Twisted multiplicative actions of $\Gamma_\sB$ and $\Gamma_{\sB_-}$} \label{subec-gpoid-action}

Identifying $\b$ and $\b_-$ as dual vector spaces via the pairing $\lara_{(\b, \b_-)}$, one has the dressing actions 
\begin{align*}
\varrho_{\scriptscriptstyle \G^{\bbfu, \bbfv}}: \b_- \to \XX^1(\G^{\bbfu, \bbfv}), & \hs \varrho_{\scriptscriptstyle \G^{\bbfu, \bbfv}}(\xi) = \pi^\sharp_{\bbfu, \bbfv}(\mu_+^*\xi^L), \hs \xi \in \b_-,   \\
\vartheta_{\scriptscriptstyle \G^{\bbfu, \bbfv}}: \b \to \XX^1(\G^{\bbfu, \bbfv}), & \hs \vartheta_{\scriptscriptstyle \G^{\bbfu, \bbfv}}(x) = \pi^\sharp_{\bbfu, \bbfv}(\mu_-^*x^R), \hs x \in \b. 
\end{align*}

\begin{lem}
Let $\tilde{y} = ([c]_{\sF_n}, \: b, \: b_-, \: [c_-]_{\sF_n}) \in \G^{\bbfu, \bbfv}$, with $c \in C_{\bbfu}$, $c_- \in C_{\bbfv}$, $b \in B$, $b_- \in B_-$. For $x \in \b$ and $\xi \in \b_-$, one has 
\begin{align*}
\varrho_{\scriptscriptstyle \G^{\bbfu, \bbfv}}(\xi)(\tilde{y}) & = (\rho_\bfu(\Ad^*_{b^{-1}}\xi)([c]_{\sF_n}), \; \varrho_\sB(\xi)(b), \; \mu_-(\varrho_{\scriptscriptstyle \G^{\bbfu, \bbfv}}(\xi)(\tilde{z})), \;  \rho_\bfv(\xi)([c_-]_{\sF_n})),  \\
\vartheta_{\scriptscriptstyle \G^{\bbfu, \bbfv}}(x)(\tilde{y}) & = (\lambda_\bfu(x)([c]_{\sF_n}), \; \mu_+(\vartheta_{\scriptscriptstyle \G^{\bbfu, \bbfv}}(x)(\tilde{z})), \; \vartheta_{\sB_-}(x)(b_-), \;  \lambda_\bfv(\Ad^*_{b_-}x)([c_-]_{\sF_n})). 
\end{align*}
\end{lem}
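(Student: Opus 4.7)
The plan is to establish both identities by a direct contraction of $\pi_{\bbfu, \bbfv}^\sharp$ against the explicit pullback 1-forms, using the formula for $\pi_{\bbfu, \bbfv}$ written out in \eqref{eq-pi_uu}. Since $\mu_+: \G^{\bbfu,\bbfv}\to B$ and $\mu_-:\G^{\bbfu,\bbfv}\to B_-$ are the projections onto the second and third factor of $\O^\bfu\times B\times B_-\times\O^\bfv$, we have at the point $\tilde{y}=([c]_{\sF_n},b,b_-,[c_-]_{\sF_n})$ the equalities $\mu_+^*\xi^L(\tilde{y})=(0,\xi^L(b),0,0)$ and $\mu_-^*x^R(\tilde{y})=(0,0,x^R(b_-),0)$, where in the second expression $x\in\b$ is regarded as a linear form on $\b_-$ via $\lara_{(\b,\b_-)}$. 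Thus the lemma reduces to applying the bivector in \eqref{eq-pi_uu}, term by term, to these two 1-forms and identifying the resulting components.

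For the computation of $\varrho_{\scriptscriptstyle\G^{\bbfu,\bbfv}}(\xi)(\tilde{y})$, I would proceed as follows. The diagonal block $(\pi_n,\pist,\pist,-\pi_n)$ contributes $\pist^\sharp(\xi^L)(b)=\varrho_\sB(\xi)(b)$ in the $B$-slot, by definition of the dressing action in \eqref{eq-varrhoG}. For each mixed term, I use the elementary identity $(A\wedge B)^\sharp(\alpha)=\alpha(A)B-\alpha(B)A$ together with the invariant pairings
\begin{equation*}
\langle x^R(b),\xi^L(b)\rangle=\langle\xi,\Ad_{b^{-1}}x\rangle=\langle\Ad_{b^{-1}}^*\xi,x\rangle,\qquad \langle x^L(b),\xi^L(b)\rangle=\langle\xi,x\rangle,
\end{equation*}
valid for $x\in\b$ and $\xi\in\b_-$. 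The term $-(\rho_\bfu(\xi^i),0,0,0)\wedge(0,(x_i)^R,0,0)$ then produces, after summing against the dual bases, the first-slot contribution $\rho_\bfu(\Ad_{b^{-1}}^*\xi)([c]_{\sF_n})$; the term $(0,(x_i)^L,0,0)\wedge(0,0,b_{-\bbfv}(\xi^i)^L,\rho_\bfv(\xi^i))$ yields the fourth-slot contribution $\rho_\bfv(\xi)([c_-]_{\sF_n})$ via the left-invariant pairing; the remaining mixed terms either vanish against $(0,\xi^L(b),0,0)$ or feed only into the $B_-$-slot, whose value is by definition $\mu_-(\varrho_{\scriptscriptstyle\G^{\bbfu,\bbfv}}(\xi)(\tilde{y}))$.

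The calculation of $\vartheta_{\scriptscriptstyle\G^{\bbfu,\bbfv}}(x)(\tilde{y})$ is completely analogous, now using the pairings on $B_-$:
\begin{equation*}
\langle x^R(b_-),\xi^L(b_-)\rangle=\langle\Ad_{b_-}^*x,\xi\rangle,\qquad \langle x^R(b_-),\xi^R(b_-)\rangle=\langle x,\xi\rangle.
\end{equation*}
The diagonal block contributes $\pist^\sharp(x^R)(b_-)=\vartheta_{\sB_-}(x)(b_-)$ in the $B_-$-slot, the mixed term $(0,0,(\xi^i)^L,0)\wedge(0,0,0,\lambda_\bfv(x_i))$ contributes $\lambda_\bfv(\Ad_{b_-}^*x)([c_-]_{\sF_n})$ to the fourth slot after summing over dual bases, and the mixed term $-(\lambda_\bfu(x_i),b_{\bbfu}(x_i)^R,0,0)\wedge(0,0,(\xi^i)^R,0)$ contributes $\lambda_\bfu(x)([c]_{\sF_n})$ to the first slot using the right-invariant pairing; the remaining $B$-slot is tautologically $\mu_+(\vartheta_{\scriptscriptstyle\G^{\bbfu,\bbfv}}(x)(\tilde{y}))$. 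There is no conceptual obstacle; the only care required is bookkeeping of signs and of which invariance (left or right) is relevant in each pairing, since the mixed terms of $\pi_{\bbfu,\bbfv}$ combine both left- and right-invariant fields in non-symmetric ways.
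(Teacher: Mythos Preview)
Your proposal is correct and follows essentially the same approach as the paper: both compute $\pi_{\bbfu,\bbfv}^\sharp$ against $(0,\xi^L,0,0)$ and $(0,0,x^R,0)$ term by term from the explicit formula \eqref{eq-pi_uu}, identifying the first, second, and fourth (resp.\ first, third, and fourth) components directly and leaving the $B_-$-component (resp.\ $B$-component) as the tautological $\mu_-$ (resp.\ $\mu_+$) projection. The paper's proof additionally remarks that tangency of the dressing vector field to the submanifold $\G^{\bbfu,\bbfv}$ forces the remaining component to be determined by the other three, but this is not needed for the lemma as stated.
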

\begin{proof}
As $\varrho_{\scriptscriptstyle \G^{\bbfu, \bbfv}}(\xi)$ and $\vartheta_{\scriptscriptstyle \G^{\bbfu, \bbfv}}(x)$ are tangent to $\G^{\bbfu, \bbfv}$, the $B_-$-component of $\varrho_{\scriptscriptstyle \G^{\bbfu, \bbfv}}(\xi)$ is determined by the three others, and similarly, the $B$-component of $\vartheta_{\scriptscriptstyle \G^{\bbfu, \bbfv}}(x)$ is determined by the other three. The Lemma follows by pairing $\pi^\sharp_{\bar{\bfu}, \bar{\bfu}}$ given in \eqref{eq-pi_uu} with $(0, \xi^L, 0, 0)$ and $(0, 0, x^R, 0)$.  
\end{proof}

The next Proposition \ref{pro-lhd-BB_-} is proven by straightforward calculation.

\begin{pro} \label{pro-lhd-BB_-}
One has a right Lie groupoid action of $\Gamma_\sB$ on $\mu_+$ given by 
$$
\tilde{y} \lhd_\sB \gamma = \left( [c^{u'}]_{\sF_n}, \;\;  b', \;\; b_{-\bbfu}(u', c)^{-1}b_- b_{-\bbfv}(u, c_-) , \;\; [c_-^u]_{\sF_n}\right),  
$$
where $\tilde{y} = ([c]_{\sF_n}, \: b, \: b_-, \: [c_-]_{\sF_n}) \in \G^{\bbfu, \bbfv}$, $\gamma = (b, u, u', b') \in \Gamma_\sB$, with $c \in C_{\bbfu}$, $c_- \in C_{\bbfv}$, $b, b' \in B$, $b_-, u, u' \in B_-$, and a right Lie groupoid action of $\Gamma_{\sB_-}$ on $\mu_-$, given by 
$$
\tilde{y} \lhd_{\sB_-} \gamma_- = \left( [g[c]]_{\sF_n}, \;\; b_{\bbfu}(g, c)b b_{\bbfv}(g', c_-)^{-1}, \;\; b'_-, \;\; [g'[c_-]]_{\sF_n} \right), 
$$
where $\tilde{y} = ([c]_{\sF_n}, \: b, \: b_-, \: [c_-]_{\sF_n}) \in \G^{\bbfu, \bbfv}$, $\gamma_- = (g, b_-, b'_-, g') \in \Gamma_{\sB_-}$, with $c \in C_{\bbfu}$, $c_- \in C_{\bbfv}$, $b, g, g' \in B$, $b_-, b'_- \in B_-$. When $\bfu = \bfv$, these Lie groupoid actions satisfy the twisted multiplicativity properties \eqref{eq-Y-twist} - \eqref{eq-Z-twist}. 
\end{pro}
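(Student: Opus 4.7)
The proof is a direct verification of (i) well-definedness of both formulas, (ii) the right Lie groupoid action axioms, and (iii) the twisted multiplicativity identities when $\bfu = \bfv$. All three rest on the defining relations
$$
\underline{c}\,u \;=\; b_{-\bbfu}(u,c)\,\underline{c^u}, \qquad g\,\underline{c} \;=\; \underline{g[c]}\,b_{\bbfu}(g,c)
$$
(and their analogues for $\bbfv$), together with the observation that the constraint $\bar b\,\bar{\bar u} = \bar{\bar{u'}}\,\bar{b'}$ in $D = G \times T$ defining a point $(b,u,u',b') \in \Gamma$ projects via the Poisson morphism \eqref{eq-q} to the $G$-level equation $bu = u'b'$, and likewise $g b_- = b'_- g'$ for a point $(g, b_-, b'_-, g') \in \Gamma_{\sB_-}$.

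For well-definedness of $\lhd_\sB$, one must verify, given $\underline{c}\,b = b_-\,\underline{c_-}$ and $bu = u'b'$, that
$$
\underline{c^{u'}}\,b' \;=\; b_{-\bbfu}(u',c)^{-1}\,b_-\,b_{-\bbfv}(u,c_-)\,\underline{c_-^u},
$$
which follows from the chain $\underline{c^{u'}}\,b' = b_{-\bbfu}(u',c)^{-1}\underline{c}\,u'b' = b_{-\bbfu}(u',c)^{-1}\underline{c}\,bu = b_{-\bbfu}(u',c)^{-1}b_-\underline{c_-}\,u = b_{-\bbfu}(u',c)^{-1}b_-\,b_{-\bbfv}(u,c_-)\,\underline{c_-^u}$. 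Well-definedness of $\lhd_{\sB_-}$ is obtained analogously via $gb_- = b'_-g'$. The moment-map and identity axioms are immediate from $c^e = c$, $e[c] = c$, and $b_{\pm\bbfu}(e,c) = e$, while associativity reduces to the cocycle identities
$$
b_{-\bbfu}(u_1u_2,c) = b_{-\bbfu}(u_1,c)\,b_{-\bbfu}(u_2,c^{u_1}), \qquad b_{\bbfu}(g_1g_2,c) = b_{\bbfu}(g_1,g_2[c])\,b_{\bbfu}(g_2,c),
$$
each of which is obtained by expanding $\underline{c}(u_1u_2)$ and $(g_1g_2)\underline{c}$ in two ways.

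For the twisted multiplicativity of $\lhd_\sB$, writing $\tilde y_i = ([c^{(i)}]_{\sF_n}, b_i, b^-_i, [c^{(i)}_-]_{\sF_n})$ with $c^{(1)}_- = c^{(2)}$ and $\gamma_i = (b_i, u_i, u^{(1)}_i, b'_i)$ with composability condition $u_1 = u^{(1)}_2$ in $\Gamma_{\sB_-}$ (so that $\gamma_2 \star \gamma_1$ is defined), both sides of \eqref{eq-Y-twist} produce the outer entries $([(c^{(1)})^{u^{(1)}_1}]_{\sF_n},\, b'_1 b'_2,\; \cdot\;, [(c^{(2)}_-)^{u_2}]_{\sF_n})$ at once; the only substantive comparison is the middle $B_-$-entry, where on the right-hand side the subword $b_{-\bbfu}(u_1,c^{(1)}_-)\,b_{-\bbfu}(u^{(1)}_2,c^{(2)})^{-1}$ collapses to the identity by the two composability conditions, leaving $b_{-\bbfu}(u^{(1)}_1,c^{(1)})^{-1}\,b^-_1 b^-_2\,b_{-\bbfu}(u_2,c^{(2)}_-)$ on both sides.

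No conceptual obstacle arises; the only nontrivial input beyond direct algebra is the projection of the $\Gamma$-constraint to the $G$-level multiplicative relation, which is what ties the action formula to the constraint defining $\G^{\bbfu,\bbfv}$. The chief risk is bookkeeping, since one must simultaneously track composability in two different groupoid structures on $\Gamma$ together with composability in $\G^{\bbfu,\bbfu}$ and the cocycle identities for $b_{\pm\bbfu}$. To avoid duplication I would execute the $\lhd_\sB$ verifications in full, then remark that the $\lhd_{\sB_-}$ case is completely symmetric, obtained by interchanging the roles of $b_{\bbfu}$ and $b_{-\bbfu}$, of $g[c]$ and $c^u$, and of the multiplicative rules on $\Gamma_\sB$ and $\Gamma_{\sB_-}$.
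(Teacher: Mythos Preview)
Your proposal is correct and is precisely the ``straightforward calculation'' the paper leaves to the reader; the paper gives no further argument beyond that phrase. Your identification of the key ingredients---the projection of the $\Gamma$-constraint to $bu = u'b'$ in $G$, the cocycle identities for $b_{\pm\bbfu}$, and the cancellation in the middle $B_-$-entry via the two composability conditions---is exactly what is required, and your plan to treat $\lhd_{\sB_-}$ by symmetry is consistent with the paper's practice elsewhere.
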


The remainder of $\S$\ref{sec-lhd-BB_-} is devoted to proving the following 

\begin{thm} \label{thm-lhd-BB_-}
The actions $\lhd_\sB$ and $\lhd_{\sB_-}$ are Poisson actions of $(\Gamma_\sB, -\pi_\sGam)$ and $(\Gamma_{\sB_-}, \pi_\sGam)$. 
\end{thm}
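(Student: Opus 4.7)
The plan is to apply Proposition \ref{pro-liu.wei.xu-adapt} to $\lhd_\sB$ and the analogous criterion from part 1 of the Remark following Proposition \ref{pro-gp/gpoid-action} to $\lhd_{\sB_-}$. For each action this reduces the theorem to verifying an infinitesimal identity (respectively \eqref{eq-dirac1} and \eqref{eq-dirac1Z}) and a finite bivector identity of the form \eqref{eq-dirac2}.

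The infinitesimal identity is handled by direct differentiation. Fix $\tilde y = ([c]_{\sF_n}, b, b_-, [c_-]_{\sF_n}) \in \G^{\bbfu, \bbfv}$ and $\xi \in \b_-$; differentiating the formula for $\tilde y \lhd_\sB \gamma_{b, \exp(t\xi)}$ from Proposition \ref{pro-lhd-BB_-} at $t=0$ and using \eqref{lem-Ad} to evaluate $\frac{d}{dt} \mid_{t = 0} b[\exp(t\xi)] = \Ad^*_{b^{-1}}\xi$, one obtains a tangent vector to $\G^{\bbfu, \bbfv}$ at $\tilde y$ whose first, second, and fourth components are $\rho_\bfu(\Ad^*_{b^{-1}}\xi)([c]_{\sF_n})$, $\varrho_\sB(\xi)(b)$, and $\rho_\bfv(\xi)([c_-]_{\sF_n})$, matching those of $\varrho_{\scriptscriptstyle \G^{\bbfu, \bbfv}}(\xi)(\tilde y)$ as computed in the Lemma preceding Proposition \ref{pro-lhd-BB_-}; the third component is then forced by tangency to $\G^{\bbfu, \bbfv}$. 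The verification of \eqref{eq-dirac1Z} is entirely symmetric.

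For the finite identity, the most natural choice of $\varphi$ in Proposition \ref{pro-gp/gpoid-action} is $\tau_{\bbfv}: (\G^{\bbfu, \bbfv}, \pi_{\bbfu, \bbfv}) \to (\O^\bfv, -\pi_n)$, which is Poisson by \eqref{eq-pi_uu} and intertwines $\lhd_\sB$ with the right dressing action $\rho_\bfv$ of $(B_-, -\pist)$ on $(\O^\bfv, \pi_n)$ via $\tau_{\bbfv}(\tilde y \lhd_\sB \gamma) = \rho_\bfv(\tau_{\bbfv}(\tilde y), \theta_{\sB_-}(\gamma))$. The main obstacle is that $\tau_{\bbfv}$ is a submersion rather than an immersion, so Proposition \ref{pro-gp/gpoid-action} does not apply as stated.

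To circumvent this, I plan to apply Proposition \ref{pro-gp/gpoid-action} with $\varphi = Id_{\G^{\bbfu, \bbfv}}$, together with a locally defined right action of $(B_-, \pist)$ on $(\G^{\bbfu, \bbfv}, \pi_{\bbfu, \bbfv})$ itself: for $(\tilde y, u)$ with $(\mu_+(\tilde y), u) \in O_{\sB, \sB_-}$, set $\rho_{\bbfu, \bbfv}(\tilde y, u) = \tilde y \lhd_\sB \gamma_{\mu_+(\tilde y), u}$. The multiplication rule in $\Gamma_\sB$ yields the identity $\gamma_{b, u_1 u_2} = \gamma_{b, u_1} \gamma_{b^{u_1}, u_2}$, a consequence of the multiplicativity of the local dressing actions, so $\rho_{\bbfu, \bbfv}$ is a local right $B_-$-action. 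The technical heart of the proof is then to establish that $\rho_{\bbfu, \bbfv}$ is Poisson; I plan to verify this by unfolding the explicit action formula from Proposition \ref{pro-lhd-BB_-} and checking compatibility against the mixed product expression \eqref{eq-pi_uu} for $\pi_{\bbfu, \bbfv}$, using the Poisson properties of the dressing actions $\rho_\bfu$, $\rho_\bfv$, $\varrho_\sB$, $\vartheta_{\sB_-}$ and of the cocycles $b_{-\bbfu}$, $b_{-\bbfv}$. Proposition \ref{pro-gp/gpoid-action} then gives \eqref{eq-dirac2} for every $\gamma \in O_\sGam$ in the appropriate fibre of $\theta_\sB$, and \eqref{eq-dirac2} extends to all of $\Gamma_\sB$ by continuity. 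The argument for $\lhd_{\sB_-}$ is symmetric, with $\theta_{\bbfu}$ in place of $\tau_{\bbfv}$ and the left $B$-dressing replacing the right $B_-$-dressing.
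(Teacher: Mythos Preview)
Your treatment of the infinitesimal identity \eqref{eq-dirac1} matches the paper's Lemma \ref{lem-lhdB-diract1} exactly. The divergence is in how \eqref{eq-dirac2} is obtained.

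Your plan to take $\varphi = Id$ in Proposition \ref{pro-gp/gpoid-action} and verify directly that the local $B_-$-action $\rho_{\bbfu, \bbfv}$ is Poisson has two difficulties. First, the ``technical heart'' is only announced, not executed: checking that the map $(\tilde y, u) \mapsto \tilde y \lhd_\sB \gamma_{\mu_+(\tilde y), u}$ is Poisson against the six-term mixed product expression \eqref{eq-pi_uu} requires control over the Poisson behaviour of the cocycles $b_{-\bbfu}, b_{-\bbfv}$, which is nowhere established and is itself nontrivial. Second, even granting this, the hypothesis of Proposition \ref{pro-gp/gpoid-action} with $\varphi = Id$ reads $\tilde y \lhd_\sB \gamma = \rho_{\bbfu, \bbfv}(\tilde y, \theta_{\sB_-}(\gamma))$, which forces $\gamma = \gamma_{\mu_+(\tilde y), \theta_{\sB_-}(\gamma)}$ and hence $\gamma \in O_\sGam$. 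Your continuity extension then needs $O_\sGam$ to be dense in each $\theta_\sB$-fibre, but these fibres are $|T^{(2)}|$-fold covers of open subsets of $B_-$, and $O_\sGam$ picks out a single sheet; since $T^{(2)}$ is nontrivial for semisimple $G$, the extension does not follow from continuity alone without further argument (one would need connectedness of $\G^{\bbfu, \bbfv} \ast \Gamma_\sB$ and analytic continuation, or a separate treatment of each sheet).

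The paper sidesteps both issues by constructing an auxiliary Poisson manifold $(Z_{\bfu, \sD} \times F'_{-n}, \pi)$ carrying a \emph{global} right Poisson action of $(B_-, \pist)$, where $Z_{\bfu, \sD} = \O^{\bfu} \times_{\sB_-} D$ and the $B_-$-action comes by restricting a natural $D$-action. An immersive Poisson map $\varphi$ from $\G^{\bbfu, \bbfv}$ into this model is built from the local diffeomorphism $\psi: ([c]_{\sF_n}, b) \mapsto [[c]_{\sF_n}, \bar b]$ together with the anti-Poisson identification $\O^{\bfv} \cong \O'^{-\bfv}$ of Lemma \ref{lem-isom-I_u}. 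Because the model space absorbs the $T^{(2)}$-ambiguity, the intertwining relation $\varphi(\tilde y \lhd_\sB \gamma) = \varphi(\tilde y) \cdot \theta_{\sB_-}(\gamma)$ holds for \emph{every} $\gamma \in \Gamma_\sB$ (Proposition \ref{pro-model-Guu}), and Proposition \ref{pro-gp/gpoid-action} then yields \eqref{eq-dirac2} globally without any density argument. The point of the model space is precisely to make the Poisson-ness of the $B_-$-action transparent rather than a computation.
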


As the case for $\lhd_\sB$ and $\lhd_{\sB_-}$ are completely parallel, we will only prove Theorem \ref{thm-lhd-BB_-} for $\lhd_\sB$.

\begin{lem} \label{lem-lhdB-diract1}
The action $\lhd_\sB$ satisfies \eqref{eq-dirac1}.  
\end{lem}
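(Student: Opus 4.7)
The plan is to verify \eqref{eq-dirac1} by direct computation of both sides. Fix $\tilde{y} = ([c]_{\sF_n}, \: b, \: b_-, \: [c_-]_{\sF_n}) \in \G^{\bbfu, \bbfv}$ with $c \in C_{\bbfu}$, $c_- \in C_{\bbfv}$, and $\xi \in \b_-$. Since $\mu_+(\tilde{y}) = b$, we have $\gamma_{b, \exp(t\xi)} = (b, \: \exp(t\xi), \: b[\exp(t\xi)], \: b^{\exp(t\xi)}) \in \Gamma_\sB$, so by Proposition \ref{pro-lhd-BB_-},
$$
\tilde{y} \lhd_\sB \gamma_{b, \exp(t\xi)} = \left( [c^{b[\exp(t\xi)]}]_{\sF_n}, \; b^{\exp(t\xi)}, \; b_{-\bbfu}(b[\exp(t\xi)], c)^{-1}\, b_-\, b_{-\bbfv}(\exp(t\xi), c_-), \; [c_-^{\exp(t\xi)}]_{\sF_n} \right).
$$

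I would then differentiate this curve at $t = 0$ componentwise. Using \eqref{lem-Ad} and the definition of $\rho_\bfu$ as a right $B_-$-action on $\O^\bfu$, the first component yields $\rho_\bfu(\Ad^*_{b^{-1}}\xi)([c]_{\sF_n})$; the second component yields $\varrho_\sB(\xi)(b)$ by the remark following \eqref{eq-lambdaG*}; and the fourth yields $\rho_\bfv(\xi)([c_-]_{\sF_n})$ directly from the definition of $\rho_\bfv$. By the preceding lemma, these are exactly the first, second, and fourth components of $\varrho_{\scriptscriptstyle \G^{\bbfu, \bbfv}}(\xi)(\tilde{y})$.

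To conclude, I would argue that both sides of \eqref{eq-dirac1} are tangent to $\G^{\bbfu, \bbfv} \subset \O^\bfu \times B \times B_- \times \O^\bfv$, and that the projection $\pi: \G^{\bbfu, \bbfv} \to \O^\bfu \times B \times \O^\bfv$ dropping the $B_-$-factor is an immersion: indeed, the defining equation $\underline{c}\,b = b_-\underline{c_-}$ of $\G^{\bbfu, \bbfv}$ (recall \eqref{eq-G^uu}) differentiates to a relation in which $db_-$ is uniquely determined by $d\underline{c}$, $db$, $d\underline{c_-}$, so a tangent vector with vanishing first, second, and fourth components must be zero. Since the two sides of \eqref{eq-dirac1} agree after applying $d\pi$, they agree as vectors in $T_{\tilde{y}}\G^{\bbfu, \bbfv}$. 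No serious obstacle is anticipated; the proof is essentially a bookkeeping exercise whose only mild subtlety is the use of the tangency/embedding argument to bypass the need for an explicit formula for the $B_-$-component of the dressing vector field.
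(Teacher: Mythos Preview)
Your proof is correct and follows essentially the same approach as the paper: write out $\tilde{y} \lhd_\sB \gamma_{b,u}$ explicitly using Proposition~\ref{pro-lhd-BB_-}, differentiate in $u$ via \eqref{lem-Ad}, and match components against the preceding lemma's formula for $\varrho_{\scriptscriptstyle \G^{\bbfu, \bbfv}}(\xi)$. Your tangency/immersion argument for the $B_-$-component is exactly the observation the paper uses in the proof of that preceding lemma (namely that a tangent vector to $\G^{\bbfu,\bbfv}$ is determined by its other three components), so nothing new is needed here.
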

\begin{proof}
Let $\tilde{y} = ([c]_{\sF_n}, \: b, \: b_-, \: [c_-]_{\sF_n}) \in \G^{\bbfu, \bbfv}$ and $u \in B_-$. The Lemma is proven by differentiating 
$$
\tilde{y} \lhd_\sB \gamma_{b, u} =  \left( [c^{b[u]}]_{\sF_n}, \;\;  b^u, \;\; b_{-\bbfu}(b[u], c)^{-1}b_- b_{-\bbfv}(u, c_-) , \;\; [c_-^u]_{\sF_n}\right)
$$
in $u$ using \eqref{lem-Ad}. 
\end{proof}

\subsection{A model space for $\G^{\bbfu, \bbfv}$ with an action of $B_-$}

To prove that $\lhd_\sB$ satisfies \eqref{eq-dirac2}, and thus that it is a Poisson action, we will construct a Poisson immersion from $(\G^{\bbfu, \bbfv}, \pi_{\bbfu, \bbfv})$ into a Poisson manifold with a Poisson action of $(B_-, \pist)$, and apply Proposition \ref{pro-gp/gpoid-action}. We start with the following simplification. Let 
$$
f_{\bfu, \bfv}: \O^{\bfu} \times B \times B_- \times \O^{\bfv} \to   \O^{\bfu} \times B \times \O^{\bfv}, \hs f_{\bfu, \bfv}([c]_{\sF_n}, b, b_-, [c_-]_{\sF_n}) = ([c]_{\sF_n}, b,  [c_-]_{\sF_n}), 
$$
where $c \in C_{\bbfu}$, $c_- \in C_{\bbfv}$, $b \in B$, $b_- \in B_-$, so that
\begin{align*}
f_{\bfu, \bfv}(\pi_{\bbfu, \bbfv}) = (\pi_n, \: \pist, \: - \pi_n) - (\rho_\bfu(\xi^i), 0, 0) \wedge (0, (x_i)^R, 0) + (0, (x_i)^L, 0) \wedge (0, 0, \rho_\bfv(\xi^i))
\end{align*}
is a Poisson structure on $\O^{\bfu} \times B \times \O^{\bfv}$, where $(x_i)$ is basis of $\b$ and $(\xi^i)$ the dual basis with respect to $\lara_{(\b, \b_-)}$, and we identify $(\G^{\bbfu, \bbfv}, \pi_{\bbfu, \bbfv})$ with its image 
$$
f_{\bfu, \bfv}(\G^{\bbfu, \bbfv}) = \{ ([c]_{\sF_n}, b, [c_-]_{\sF_n}) : \underline{c}b (\underline{c_-})^{-1} \in B_-, \; c \in C_{\bbfu}, c_- \in C_{\bbfv}, \; b \in B\}
$$
in $(\O^{\bfu} \times B \times \O^{\bfv}, f_{\bfu, \bfv}(\pi_{\bbfu, \bbfv}))$.

Let $B_-^2$ act on the right of $\O^{\bfu} \times D$ by 
$$
([c]_{F_n}, \: d) \cdot (b_{-1}, b_{-2}) = ([c^{b_{-1}}]_{F_n}, \: (\bar{\bar{b}}_{-2})^{-1}d), \hs c \in C_{\bar{\bfu}}, \; d \in D, \; b_{-j} \in B_-,
$$
and let $Z_{\bfu, \sD} = \O^{\bfu} \times_{\sB_-} D$ be the quotient of $\O^{\bfu} \times D$ by the diagonal subgroup $(B_-)_{\diag}$ of $ B_-^2$. Let $\varpi: \O^{\bfu} \times D \to Z_{\bfu, \sD}$ be the quotient map and write 
$$
[[c]_{F_n}, \: d] = \varpi([c]_{F_n}, \: d), \hs c \in C_{\bar{\bfu}}, \; d \in D. 
$$
As the left multiplication $(D, -\piD) \times (D, \pi^+_\sD) \to (D, \pi^+_\sD)$ and the right multiplication $(D, \pi^+_\sD) \times (D, \piD) \to (D, \pi^+_\sD)$ are Poisson actions, and since $(B_-)_{\diag}$ is a coisotropic subgroup of $(B_-, -\pist) \times (B_-, \pist)$, the direct product Poisson structure $\pi_n \times \pi^+_\sD$ on $\O^{\bfu} \times D$ descends to a well defined Poisson structure $\pi_{\bfu, \sD}$ on $Z_{\bfu, \sD}$, and one has a right Poisson action of $(D, \piD)$ on $(Z_{\bfu, \sD}, \pi_{\bfu, \sD})$ given by 
$$
\rho_{\scriptscriptstyle Z_{\bfu, \sD}}([[c]_{F_n}, \: d], \: d') = [[c]_{F_n}, \: dd'], \hs c \in  C_{\bar{\bfu}}, \; d, d' \in D. 
$$
Recall from \eqref{eq-J^+_bfu} the mixed product Poisson structure $\pi_n \times_{(\rho_\bfu, \: \lambda_+)} \pist$ on $\O^{\bfu} \times B$. 
\begin{lem}
The map 
$$
\psi: (\O^{\bfu} \times B, \: \pi_n \times_{(\rho_\bfu, \: \lambda_+)} \pist) \to (Z_{\bfu, \sD}, \pi_{\bfu, \sD}), \hs \psi([c]_{\sF_n}, b) = [[c]_{\sF_n}, \bar{b}], \hs c \in C_{\bar{\bfu}}, \; b \in B,
$$
is a local diffeomorphism and a Poisson map. 
\end{lem}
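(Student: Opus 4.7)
My plan is to factor $\psi$ as the composition
\[
\O^{\bfu} \times B \xrightarrow{\iota} \O^{\bfu} \times D \xrightarrow{\varpi} Z_{\bfu, \sD}, \qquad \iota([c]_{\sF_n}, b) = ([c]_{\sF_n}, \bar{b}),
\]
and exploit that $\varpi$ is by construction a Poisson submersion from $(\O^{\bfu} \times D, \pi_n \times \pi^+_\sD)$ onto $(Z_{\bfu, \sD}, \pi_{\bfu, \sD})$.

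For the local diffeomorphism statement, I appeal to the local factorisation $D \supset \bar{\bar{B}}_- \bar{B}$ with the discrete intersection $\bar{\bar{B}}_- \cap \bar{B} = \bar{T}^{(2)}$ recalled in $\S$\ref{subsec-bb_-}. Every $d \in \bar{\bar{B}}_- \bar{B}$ admits, up to $T^{(2)}$, a unique factorisation $d = \bar{\bar{b}}_- \bar{b}'$. Hence, given $[[c]_{\sF_n}, d] \in Z_{\bfu, \sD}$ with $d \in \bar{\bar{B}}_- \bar{B}$, picking such a $b_-$ produces the local inverse $[[c]_{\sF_n}, d] \mapsto ([c^{b_-}]_{\sF_n}, \bar{\bar{b}}_-^{-1} d)$ of $\psi$, and the dimension identity $\dim(\O^{\bfu} \times B) = \dim(\O^{\bfu}) + \dim D - \dim B_- = \dim Z_{\bfu, \sD}$ confirms the matching of dimensions.

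For the Poisson assertion, since $\varpi_\ast(\pi_n \times \pi^+_\sD) = \pi_{\bfu, \sD}$ by construction, it suffices to prove that at each point of $\iota(\O^{\bfu} \times B)$ the difference
\[
\Delta := (\pi_n \times \pi^+_\sD)(\iota([c]_{\sF_n}, b)) - \iota_\ast(\pi_n \times_{(\rho_\bfu, \lambda_+)} \pist)([c]_{\sF_n}, b)
\]
lies in the subspace of $\wedge^2 T(\O^{\bfu} \times D)$ generated by wedges $\delta(\xi) \wedge v$, where $\delta(\xi) = (\rho_\bfu(\xi), -\bar{\bar{\xi}}^R)$ is the infinitesimal generator of the diagonal $B_-$-action at the point (and hence the bivector kernel of $d\varpi$). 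Since $\iota$ is the Poisson embedding of $(B, \pist)$ as a Poisson Lie subgroup of $(D, \piD)$, one has $\iota_\ast \pist = \piD\!\mid_{\bar{B}}$; combining with $\pi^+_\sD - \piD = 2 \Lambda_{\b, \b_-}^R$ reduces $\Delta$ to an explicit expression in $r_{\bar{b}} \Lambda_{\b, \b_-}$ and the mixed-product correction $(\rho_\bfu(\xi^i), 0) \wedge (0, \bar{x}_i^R)$, where $(x_i) \subset \b$ and $(\xi^i) \subset \b_-$ are dual bases with respect to $\lara_{(\b, \b_-)}$.

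The main obstacle is this last algebraic identification. I plan to use the Lagrangian splitting $\d = \bar{\b} \oplus \bar{\bar{\b}}_-$ to decompose $(0, r_{\bar{b}} \bar{\bar{\xi}}) = (\rho_\bfu(\xi), 0) - \delta(\xi)$, and to recast each summand of $\Delta$ as a combination of wedges $\delta(\xi^i) \wedge v_i$ for suitable auxiliary vectors $v_i$ drawn from $\iota_\ast T(\O^{\bfu} \times B)$. The resulting cancellations should be driven by the duality $\la \bar{x}_i, \bar{\bar{\xi}}^j \ra_\d = \delta_i^j$ and by the multiplicativity of $\pist$ already encoded in $\piD\!\mid_{\bar B}$. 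I expect this step to be bookkeeping-heavy rather than conceptually difficult, once the right reorganisation of terms modulo orbit tangents is found.
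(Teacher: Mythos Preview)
Your plan is correct and is essentially the same argument as the paper's, organized slightly differently.  Both proofs factor $\psi = \varpi \circ \iota$ and exploit that the infinitesimal generator $\delta(\xi)=(\rho_\bfu(\xi),-\bar{\bar{\xi}}^R)$ of the diagonal $B_-$--action lies in $\ker d\varpi$, so that $\varpi_*(0,r_{\bar b}\bar{\bar\xi}^i)=\varpi_*(\rho_\bfu(\xi^i),0)$; this single identity is what turns the cross term of $\pi^+_\sD(\bar b)$ into the mixed term of $\pi_n\times_{(\rho_\bfu,\lambda_+)}\pist$ after applying $\varpi$.

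One remark on your intended bookkeeping: rather than writing $\pi^+_\sD-\piD=2\Lambda_{\b,\b_-}^R$ and then trying to cancel against the mixed-product correction, it is cleaner to use formula \eqref{eq-comp-pi+} (via $p_\sR$) at the point $(e,b)\in B_-\times B$, which gives directly
\[
\pi^+_\sD(\bar b)=\iota_*\pist(b)-r_{\bar b}(\bar{\bar\xi}^i\wedge\bar x_i),
\]
a single correction term that matches the mixed term on the nose after one application of the orbit-tangent identity.  This is exactly what the paper does, and it bypasses the extra step of separately identifying $\iota_*\pist$ with $\piD|_{\bar B}$ and then subtracting.  Your route via $2\Lambda^R$ would still work, but you would have to further decompose $r_{\bar b}\Lambda_{\b,\b_-}$ using the splitting $\d=\bar\b\oplus\bar{\bar\b}_-$ to see that the residual piece in $\wedge^2 T_{\bar b}\bar B$ is absorbed into $\piD(\bar b)=\iota_*\pist(b)$; the paper's shortcut avoids this.
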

\begin{proof}
The image of $\psi$ is the open subset $\im(\psi) = \varpi(\O^{\bfu} \times \bar{\bar{B}}_- \bar{B})$ of $Z_{\bfu, \sD}$, and $\psi([c]_{\sF_n}, b) = \psi([c']_{\sF_n}, b')$ if and only if $c' = c^h$ and $b' = hb$, for some $h \in T^{(2)}$. Thus $\psi$ is a local diffeomorphism. By \eqref{eq-comp-pi+}, for $b \in B$ one has 
$$
\pi^+_\sD(\bar{b}) = \piD(\bar{b}) - r_{\bar{b}}(\bar{\bar{\xi}}^i \wedge \bar{x}_i),
$$
where $(x_i)$ is a basis of $\b$ and $(\xi^i)$ the dual basis of $\b_-$ with respect to $\lara_{(\b, \b_-)}$. Thus for $c \in C_{\bar{\bfu}}$, one as 
\begin{align*}
\pi_{\bfu, \sD}([[c]_{\sF_n}, \bar{b}]) & = \varpi \left( \pi_n([c]_{\sF_n}), \: \piD(\bar{b}) - r_{\bar{b}}(\bar{\bar{\xi}}^i \wedge \bar{x}_i) \right)   \\
   & =  \varpi \left( (\pi_n([c]_{\sF_n}), \: \piD(\bar{b}) \right) - \varpi(\rho_{\bfu}(\xi^i), \: 0) \wedge \varpi(0, \:  r_{\bar{b}}\bar{x}_i)   \\
   & = \psi \left( (\pi_n \times_{(\rho_\bfu, \: \lambda_+)} \pist)([c]_{\sF_n}, b) \right),
\end{align*}
hence $\psi$ is a Poisson map. 
\end{proof}

Recall the right Poisson action $\rho_{\sF'_{-n}}$ of $(B_-, \pist)$ on $(F'_{-n}, \pi'_{-n})$. One thus has the mixed product Poisson structure 
$$
\pi : = (\pi_{\bfu, \sD}, \pi'_{-n}) + (\rho_{\scriptscriptstyle Z_{\bfu, \sD}}(\bar{x}_i), 0) \wedge (0, \rho_{\sF'_{-n}}(\xi^i))
$$
on $Z_{\bfu, \sD} \times F'_{-n}$, associated to the right Poisson action $\rho_{\scriptscriptstyle Z_{\bfu, \sD}} \! \mid_{\bar{\b}}$ of $(\b, \delta_{st})$ on $(Z_{\bfu, \sD}, \pi_{\bfu, \sD})$ and the left Poisson action $-\rho_{\sF'_{-n}}$ of $(\b_-, -\delta_{st})$ on $(F'_{-n}, \pi'_{-n})$. Using the quotient map given in \eqref{eq-q}, one sees that $\rho_{\sF'_{-n}}$ is the restriction to $B_- \cong \bar{\bar{B}}_-$ of  the right Poisson action of $(D, \piD)$ on $(F'_{-n}, \pi'_{-n})$ given by 
$$
\rho_{\sF'_{-n}, \sD}([g_1, \ldots, g_n]_{\sF'_{-n}}, \: (g, h)) = [g_1, \ldots, g_{n-1}, g_ng], \hs  g, g_i \in G, \; h \in T, 
$$
and thus by $\S$\ref{subsec-mixed-prod},
$$
\rho(a) = (\rho_{\scriptscriptstyle Z_{\bfu, \sD}}(a) , \: \rho_{\sF'_{-n}, \sD}(a)), \hs a \in \d, 
$$
is a right Poisson action of $(\d, \delta_{\d})$ on $(Z_{\bfu, \sD} \times F'_{-n}, \pi)$. Hence 
$$
([[c]_{\sF_n}, \bar{b}], \: z) \cdot u : = ([[c]_{\sF_n}, \bar{b}\bar{\bar{u}}], \; \rho_{\sF'_{-n}}(z, u)), 
$$
where $c \in C_{\bar{\bfu}}$, $z \in F'_{-n}$, $b \in B$, and $u \in B_-$, is a right Poisson action of $(B_-, \pist)$ on $(Z_{\bfu, \sD} \times F'_{-n}, \pi)$.

\begin{pro} \label{pro-model-Guu}
1) The map 
$$
\varphi: (\O^{\bfu} \times B \times \O^{\bfv}, \: f_{\bfu, \bfv}(\pi_{\bbfu, \bbfv})) \to (Z_{\bfu, \sD} \times F'_{-n}, \: \pi)
$$
given by 
$$
\varphi([c]_{\sF_n}, \: b,  \: [c_-]_{\sF_n}) = (\psi([c]_{\sF_n}, b), \: [c_-]_{\sF'_{-n}}), \hs c \in C_{\bbfu}, c_- \in C_{\bbfv}, \; b \in B, \; b_- \in B_-,
$$
is an immersive Poisson map. 

2) For $(\tilde{y}, \gamma) \in \G^{\bbfu, \bbfv} \ast \Gamma_\sB$, one has 
$$
\varphi(f_{\bfu, \bfv}(\tilde{y} \lhd_\sB \gamma)) = \varphi(f_{\bfu, \bfv}(\tilde{y})) \cdot \theta_{\sB_-}(\gamma). 
$$
\end{pro}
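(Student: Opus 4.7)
\medskip

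\noindent\textbf{Proof proposal.} The plan is to decompose $\varphi$ as the Cartesian product $\psi \times (\iota_{-\bfv} \circ I_{\bfv}^{-1})$, where $\iota_{-\bfv} : \O'^{-\bfv} \hookrightarrow F'_{-n}$ is the inclusion of the Poisson submanifold. Since $\psi$ is a local diffeomorphism by the preceding lemma and $I_{\bfv}^{-1}$ is a diffeomorphism by (the proof of) Lemma \ref{lem-isom-I_u}, $\varphi$ is an immersion. The Poissonness of $\varphi$ will then follow by breaking $f_{\bfu, \bfv}(\pi_{\bbfu, \bbfv})$ into the three displayed pieces and matching each with a corresponding piece of $\pi$.

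For the ``block-diagonal'' part $(\pi_n, \pist, -\pi_n) - (\rho_\bfu(\xi^i), 0, 0) \wedge (0, (x_i)^R, 0)$, note that the first two tensor factors form exactly $\pi_n \times_{(\rho_\bfu, \lambda_+)} \pist$, which $\psi$ sends to $\pi_{\bfu, \sD}$ by the preceding lemma, while $I_{\bfv}^{-1}$ sends $-\pi_n$ on $\O^{\bfv}$ to $\pi'_{-n}$ on $\O'^{-\bfv} \subset F'_{-n}$ by Lemma \ref{lem-isom-I_u}. For the remaining cross-term $(0, (x_i)^L, 0) \wedge (0, 0, \rho_\bfv(\xi^i))$, the key observation is that the group embedding $b \mapsto \bar{b}$ of $B$ into $D$ is a Lie group homomorphism, so for each $b \in B$ one has
\[
d\psi\bigl( (0, (x_i)^L(b)) \bigr) \;=\; \frac{d}{dt}\Big|_{t=0} \bigl[[c]_{\sF_n}, \; \bar{b}\exp(t\bar{x}_i)\bigr] \;=\; \rho_{\scriptscriptstyle Z_{\bfu, \sD}}(\bar{x}_i)\bigl(\psi([c]_{\sF_n}, b)\bigr),
\]
while the intertwining identity $I_\bfv^{-1} \circ \rho_\bfv(\cdot, b_-) = \rho_{\sF'_{-n}}(I_\bfv^{-1}(\cdot), b_-)$ (immediate from the definition of $\rho_\bfv$) gives $dI_{\bfv}^{-1}(\rho_\bfv(\xi^i)) = \rho_{\sF'_{-n}}(\xi^i)$. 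Thus the cross-term is sent to $(\rho_{\scriptscriptstyle Z_{\bfu, \sD}}(\bar{x}_i), 0) \wedge (0, \rho_{\sF'_{-n}}(\xi^i))$, matching the mixed term of $\pi$ (with the correct sign, as $\bar{x}_i$ and $\xi^i$ remain dual with respect to $\lara_\d$).

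Part 2) will then follow by a direct computation using the formula for $\lhd_\sB$ from Proposition \ref{pro-lhd-BB_-}. Writing $\gamma = (b, u, u', b') \in \Gamma_\sB$ and $\tilde{y} = ([c]_{\sF_n}, b, b_-, [c_-]_{\sF_n})$, one has $f_{\bfu,\bfv}(\tilde{y} \lhd_\sB \gamma) = ([c^{u'}]_{\sF_n}, b', [c_-^u]_{\sF_n})$, so
\[
\varphi(f_{\bfu, \bfv}(\tilde{y} \lhd_\sB \gamma)) = \bigl( [[c^{u'}]_{\sF_n}, \overline{b'}], \; [c_-^u]_{\sF'_{-n}} \bigr).
\]
The defining relation $\bar{b}\bar{\bar{u}} = \bar{\bar{u'}}\overline{b'}$ for $\gamma \in \Gamma$, combined with the $(B_-)_{\diag}$-equivalence in $Z_{\bfu, \sD}$ applied with the element $u' \in B_-$, yields
\[
[[c]_{\sF_n}, \bar{b}\bar{\bar{u}}] = [[c]_{\sF_n}, \bar{\bar{u'}}\overline{b'}] = [[c^{u'}]_{\sF_n}, \overline{b'}],
\]
while the intertwining property gives $\rho_{\sF'_{-n}}([c_-]_{\sF'_{-n}}, u) = [c_-^u]_{\sF'_{-n}}$. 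Since $\theta_{\sB_-}(\gamma) = u$ in the second groupoid structure on $\Gamma$, both expressions agree with $\varphi(f_{\bfu, \bfv}(\tilde{y})) \cdot u$, establishing 2).

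I expect the main obstacle to be the cross-term computation in Poissonness: one must track carefully that the image of $(x_i)^L$ under $d\psi$ really is the dressing vector field $\rho_{\scriptscriptstyle Z_{\bfu, \sD}}(\bar{x}_i)$ (and not its right-invariant analogue), which relies on the precise definition of the right action of $D$ on $Z_{\bfu, \sD}$ together with the homomorphism property of the embedding $B \hookrightarrow D$; any sign or left/right confusion here would propagate and break the match with the mixed term of $\pi$.
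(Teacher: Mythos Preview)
Your proposal is correct and follows essentially the same approach as the paper. The paper's proof of Part 1) is terser—it simply notes that $\psi$ is Poisson and intertwines the right $B$-action with $\rho_{\scriptscriptstyle Z_{\bfu,\sD}}|_{\bar{B}}$, and that $[c_-]_{\sF_n} \mapsto [c_-]_{\sF'_{-n}}$ is a $B_-$-equivariant Poisson immersion from $(\O^{\bfv}, -\pi_n)$ to $(F'_{-n}, \pi'_{-n})$—but your explicit cross-term computation is precisely what unpacks these two intertwining claims, and your Part 2) computation is line-for-line the paper's.
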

\begin{proof}
Part 1) is clear, since $\psi$ is a Poisson map and a local diffeomorphism intertwining the right action of $B$ on $\O^{\bfu} \times B$  by right multiplication in the second factor, and the action $\rho_{\scriptscriptstyle Z_{\bfu, \sD}}$ of $\bar{B} \cong B$, and 
$$
(\O^{\bfu}, -\pi_n) \to (F'_{-n}, \pi'_{-n}), \hs [c_-]_{\sF_n} \mapsto  [c_-]_{\sF'_{-n}}, \hs c_- \in C_{\bar{\bfu}},
$$
is a $B_-$-equivariant immersive Poisson map. As for part 2), write $\tilde{y} =  ([c]_{\sF_n}, \: b, \: b_-, \: [c_-]_{\sF_n})$ and $\gamma = (b, u, u', b')$. Then 
\begin{align*}
\varphi(f_{\bfu, \bfv}(\tilde{y} \lhd_\sB \gamma)) & = \left( [[c^{u'}]_{\sF_n}, \: \bar{b'}], \: [c_-^u]_{\sF'_{-n}} \right)  = \left( [[c]_{\sF_n}, \: \bar{\bar{u'}}\bar{b'}], \: [c_-^u]_{\sF'_{-n}} \right)    \\
   & = \left( [[c]_{\sF_n}, \: \bar{b}\bar{\bar{u}}], \: [c_-^u]_{\sF'_{-n}} \right) = \left( [[c]_{\sF_n}, \: \bar{b}], \: [c_-]_{\sF'_{-n}} \right)  \cdot u   \\
   & = \varphi(f_{\bfu, \bfv}(\tilde{y})) \cdot \theta_{\sB_-}(\gamma). 
\end{align*}
\end{proof}

\noindent
{\it Proof of Theorem \ref{thm-lhd-BB_-}.}
Applying Proposition \ref{pro-gp/gpoid-action} to $\varphi$ shows that $\lhd_\sB$ satisfies \eqref{eq-dirac2}. Thus by Lemma \ref{lem-lhdB-diract1} and Proposition \ref{pro-liu.wei.xu-adapt}, $\lhd_\sB$ is a Poisson action of $(\Gamma_\sB, -\pi_\sGam)$ on $\mu_+$. 
\qed

\section{The Poisson groupoid $(\G^{\bbfw, \bbfw}, \: \pi_{\bbfw, \bbfw})$} \label{sec-main-thm-Gww}

The last main result of this paper is Theorem \ref{thm-main-Guu} below, in which we show that $(\G^{\bar{\bfw}, \bar{\bfw}}, \: \pi_{\bar{\bfw}, \bar{\bfw}})$ is a Poisson groupoid over $(\O^\bfw, \pi_\bfw)$, where $\bfw$ is any finite sequence of Weyl group elements. As the proof of Theorem \ref{thm-main-Guu} will be by induction, we fix in this section integers $n, m \geq 1$ and $\bfu \in W^n$, $\bfv \in W^m$ with respective representatives $\bar{\bfu} \in N_\sG(T)^n$, $\bar{\bfv} \in N_\sG(T)^m$, and assume that $\pi_{\bar{\bfu}, \bar{\bfu}}$, $\pi_{\bar{\bfv}, \bar{\bfv}}$ are compatible with the groupoid structures on $\G^{\bar{\bfu}, \bar{\bfu}}$, $\G^{\bar{\bfv}, \bar{\bfv}}$, that is $(\G^{\bar{\bfu}, \bar{\bfu}}, \pi_{\bar{\bfu}, \bar{\bfu}})$ and $(\G^{\bar{\bfv}, \bar{\bfv}},  \pi_{\bar{\bfv}, \bar{\bfv}})$ are Poisson groupoids. By \cite{Lu-Mou:double-B-cell}, this is true if $n = m = 1$.

\subsection{The local Poisson groupoid $(\K_{\bbfu, \bbfv}, \pi_{\scriptscriptstyle \K_{\bbfu, \bbfv}})$} \label{subsec-K_uv}

Consider the Poisson actions $\lhd_\sB$ of $(\Gamma_\sB, -\pi_\sGam)$ on $\mu_+: (\G^{\bar{\bfu}, \bar{\bfu}}, \pi_{\bar{\bfu}, \bar{\bfu}}) \to (B, \pist)$ and $\lhd_{\sB_-}$ of $(\Gamma_{\sB_-}, \pi_\sGam)$ on $\mu_-: (\G^{\bar{\bfv}, \bar{\bfv}},  \pi_{\bar{\bfv}, \bar{\bfv}}) \to (B_-, \pist)$. By Theorems \ref{thm-lhd-BB_-} and \ref{main-thm-gpoid}, one has the local Poisson groupoid 
$$
(\G^{\bar{\bfu}, \bar{\bfu}} \times_\ssL \G^{\bar{\bfv}, \bar{\bfv}}  \rightrightarrows \O^{\bfu} \times \O^{\bfv}, \; \pi_{\bar{\bfu}, \bar{\bfu}} \times \pi_{\bar{\bfv}, \bar{\bfv}}). 
$$
We construct in $\S$\ref{subsec-K_uv} a quotient of this local Poisson groupoid. Let $T$ act on $G^{\bfu, \bfu} \times G^{\bfv, \bfv}$ by 
\begin{align*}
& \left( \left( [g_1, \ldots, g_n]_{\sF_n}, [h_1, \ldots, h_n]_{\sF'_{-n}} \right), \: \left( [g'_1, \ldots, g'_m]_{\sF_m}, [h'_1, \ldots, h'_m]_{\sF'_{-m}} \right) \right) \cdot t =  \\
& \left( \left( [g_1, \ldots, g_nt]_{\sF_n}, [h_1, \ldots, h_nt]_{\sF'_{-n}} \right), \: \left( [t^{-1}g'_1, \ldots, g'_m]_{\sF_m}, [t^{-1}h'_1, \ldots, h'_m]_{\sF'_{-m}} \right) \right), 
\end{align*}
where $g_i, g'_i, h_i, h'_i \in G$ and $t \in T$. Under the diffeomorphism $J_{\bar{\bfu}, \bar{\bfu}}^{-1} \times J_{\bar{\bfv}, \bar{\bfv}}^{-1}$, this translates to the action 
\begin{align*}
&\left( ( [c]_{\sF_n}, \: b, \: b_-, \: [c_-]_{\sF_n}), \; ( [c']_{\sF_m}, \: b', \: b'_-, \: [c'_-]_{\sF_m} ) \right) \cdot t =    \\
& \left( ( [c]_{\sF_n}, \: bt, \: b_-b_{-\bar{\bfu}}(t, c), \: [c_-^t]_{\sF_n}), \; ( [t^{-1}[c']]_{\sF_m}, \: b_{\bfv}(t^{-1}, c')b', \: t^{-1}b'_-, \: [c'_-]_{\sF_m} ) \right) 
\end{align*}
of $T$ on $\G^{\bar{\bfu}, \bar{\bfu}} \times \G^{\bar{\bfv}, \bar{\bfv}}$, where $c, c_- \in C_{\bar{\bfu}}$, $c', c'_- \in C_{\bar{\bfv}}$, $b, b' \in B$, and $b_-, b'_- \in B_-$. Let $K_{\bfu, \bfv}$ be the quotient of $G^{\bfu, \bfu} \times G^{\bfv, \bfv}$ by $T$ and let $\K_{\bar{\bfu}, \bar{\bfv}}$ be the quotient of $\G^{\bar{\bfu}, \bar{\bfu}} \times \G^{\bar{\bfv}, \bar{\bfv}}$ by $T$. Let 
$$
\varpi_{\scriptscriptstyle \K_{\bar{\bfu}, \bar{\bfv}}}: \G^{\bar{\bfu}, \bar{\bfu}} \times \G^{\bar{\bfv}, \bar{\bfv}} \to \K_{\bar{\bfu}, \bar{\bfv}}
$$
be the quotient map and denote elements of $\K_{\bar{\bfu}, \bar{\bfv}}$ as $[\tilde{y}, \tilde{z}] = \varpi_{\scriptscriptstyle \K_{\bar{\bfu}, \bar{\bfv}}}(\tilde{y}, \tilde{z})$, if $\tilde{y} \in \G^{\bar{\bfu}, \bar{\bfu}}$ and $\tilde{z} \in \G^{\bar{\bfv}, \bar{\bfv}}$. As $T$ acts by Poisson isomorphisms, $\tilde{\pi}_{n,n} \times \tilde{\pi}_{m,m}$ descends to a well defined Poisson structure $\pi_{\sK_{\bfu, \bfv}}$ on $K_{\bfu, \bfv}$, and $\pi_{\bar{\bfu}, \bar{\bfu}} \times \pi_{\bar{\bfv}, \bar{\bfv}}$ descends to a well defined Poisson structure $\pi_{\scriptscriptstyle \K_{\bar{\bfu}, \bar{\bfv}}}$ on $\K_{\bar{\bfu}, \bar{\bfv}}$.

\begin{pro} \label{pro-Kuv-gpoid}
1) One has a structure on $\K_{\bar{\bfu}, \bar{\bfv}}$ of a local groupoid over $\O^{\bfu} \times \O^{\bfv}$ given by 
\begin{align*}
&\mbox{source map}: \; \theta_{\bar{\bfu}, \bar{\bfv}}[\tilde{y}, \: \tilde{z}]= (\theta_{\bar{\bfu}}(\tilde{y}), \; \mu_+(\tilde{y})[\theta_{\bar{\bfv}}(\tilde{z})]), \hs \tilde{y} \in \G^{\bar{\bfu}, \bar{\bfu}}, \; \tilde{z} \in \G^{\bar{\bfv}, \bar{\bfv}},    \\
&\mbox{target map}: \; \tau_{\bar{\bfu}, \bar{\bfv}}[\tilde{y}, \tilde{z}] = (\tau_{\bar{\bfu}}(\tilde{y})^{\mu_-(\tilde{z})}, \; \tau_{\bar{\bfv}}(\tilde{z})),  \hs \tilde{y} \in \G^{\bar{\bfu}, \bar{\bfu}}, \; \tilde{z} \in \G^{\bar{\bfv}, \bar{\bfv}},   \\
&\mbox{identity bisection}: \; \varepsilon_{\bar{\bfu}, \bar{\bfv}}(y, z) = [\varepsilon_{\bar{\bfu}}(y), \; \varepsilon_{\bar{\bfv}}(z)], \hs y \in \O^{\bfu}, \; z \in \O^{\bfv}. 
\end{align*}
Let $\tilde{y} \in \G^{\bar{\bfu}, \bar{\bfu}}$, $\tilde{z} \in \G^{\bar{\bfv}, \bar{\bfv}}$, and let $b = \mu_+(\tilde{y})$, $b'_- = \mu_-(\tilde{z})$. When $\bar{b} \bar{\bar{b}}_- \in \bar{B}\bar{\bar{B}}_- \cap \bar{\bar{B}}_-\bar{B}$, the inverse map is given by 
$$
\iota_{\bar{\bfu}, \bar{\bfv}}[\tilde{y}, \tilde{z}] = \left[ \iota_{\bar{\bfu}}(\tilde{y} \lhd_\sB \gamma), \; \iota_{\bar{\bfv}}(\tilde{z} \lhd_{\sB_-} \gamma) \right],  
$$
where $\gamma \in \Gamma$ is any element of the form $\gamma = (b, b'_-, u, g)$, $g \in B$, $u \in B_-$. Let $\tilde{y}_1, \tilde{y}_2 \in \G^{\bar{\bfu}, \bar{\bfu}}$, $\tilde{z}_1, \tilde{z}_2 \in \G^{\bar{\bfv}, \bar{\bfv}}$ with $\tau_{\bar{\bfu}, \bar{\bfv}}[\tilde{y}_1, \tilde{z}_1] = \theta_{\bar{\bfu}, \bar{\bfv}}[\tilde{y}_2, \: \tilde{z}_2]$, and $b'_{-1} = \mu_-(\tilde{z}_1)$, $b_2 = \mu_+(\tilde{y}_2)$. When $\bar{\bar{b'}}_{-1}\bar{b}_2 \in \bar{B}\bar{\bar{B}}_- \cap \bar{\bar{B}}_-\bar{B}$, multiplication is given by 
$$
[\tilde{y}_1, \tilde{z}_1] \cdot [\tilde{y}_2, \tilde{z}_2] = \left[ \tilde{y}_1 \left(\gamma \rhd_\sB \tilde{y}_2 \right), \;\; \left( \gamma \rhd_{\sB_-} \tilde{z}_1 \right) \tilde{z}_2 \right],  
$$
where $\gamma \in \Gamma$ is any element of the form $\gamma = (g, u,  b'_{-1}, b_2)$.

2) $(\K_{\bar{\bfu}, \bar{\bfv}} \rightrightarrows \O^{\bfu} \times \O^{\bfv}, \pi_{\scriptscriptstyle \K_{\bar{\bfu}, \bar{\bfv}}})$ is a local Poisson groupoid and 
$$
\varpi_{\scriptscriptstyle \K_{\bbfu, \bbfv}}: (\G^{\bbfu, \bbfu} \times_\ssL \G^{\bbfv, \bbfv}, \: \pi_{\bbfu, \bbfu} \times \pi_{\bbfv, \bbfv}) \to (\K_{\bbfu, \bbfv}, \pi_{\scriptscriptstyle \K_{\bbfu, \bbfv}})
$$
is a morphism of local Poisson groupoids. 
\end{pro}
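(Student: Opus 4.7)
The plan is to realise $\K_{\bbfu, \bbfv}$ as the quotient of the local Poisson groupoid $(\G^{\bbfu, \bbfu} \times_\ssL \G^{\bbfv, \bbfv}, \: \pi_{\bbfu, \bbfu} \times \pi_{\bbfv, \bbfv})$ supplied by Theorem \ref{main-thm-gpoid} under the diagonal $T$-action defined in \S\ref{subsec-K_uv}, after establishing that this $T$-action is by local Poisson groupoid automorphisms and induces the trivial action on the base $\O^\bfu \times \O^\bfv$.

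The first step is to verify the $T$-invariance of the source and target maps of $\G^{\bbfu, \bbfu} \times_\ssL \G^{\bbfv, \bbfv}$. For the source $\theta(\tilde y, \tilde z) = (\theta_{\bbfu}(\tilde y), \mu_+(\tilde y)[\theta_{\bbfv}(\tilde z)])$, the first component is tautologically invariant, and for the second, writing $\tilde z = ([c']_{\sF_m}, b', b'_-, [c'_-]_{\sF_m})$, a direct calculation using the identity $(bt)\, \underline{t^{-1}[c']} = \underline{b[c']}\, b_{\bbfv}(b, c')\, b_{\bbfv}(t^{-1}, c')^{-1}$ gives $\mu_+(\tilde y \cdot t)[\theta_{\bbfv}(\tilde z \cdot t)] = \mu_+(\tilde y)[\theta_{\bbfv}(\tilde z)]$. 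The target is handled symmetrically using $\underline{c_-^t}\, (t^{-1} b'_-) = b_{-\bbfu}(t, c_-)^{-1}\, \underline{c_-}\, b'_-$. The identity bisection is manifestly $T$-equivariant, and the Poisson bivector $\pi_{\bbfu, \bbfu} \times \pi_{\bbfv, \bbfv}$ is $T$-invariant since $T$ acts by Poisson automorphisms on each factor.

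The central verification is that the twisted multiplication (and inverse) of $\G^{\bbfu, \bbfu} \times_\ssL \G^{\bbfv, \bbfv}$ is $T$-equivariant, so that each $t \in T$ defines a local groupoid automorphism. The obstruction lies in the dressing element $\gamma^{\mu_-(\tilde z_1), \mu_+(\tilde y_2)}$ appearing in the multiplication formula of Theorem \ref{main-thm-gpoid}: under the $T$-action one has $\mu_-(\tilde z_1) \mapsto t^{-1}\mu_-(\tilde z_1)$ and $\mu_+(\tilde y_2) \mapsto \mu_+(\tilde y_2) t$, and $\gamma^{b'_{-1}, b_2}$ must be replaced by $\gamma^{t^{-1} b'_{-1}, b_2 t}$. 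Expanding this via both halves of the cocycle relation \eqref{eq-gam-mult}, using the explicit description of the factors $\gamma^{t, g}$ and $\gamma^{u, t}$ for $t \in T$, and invoking the twisted multiplicativity of $\lhd_\sB$ and $\lhd_{\sB_-}$ from Proposition \ref{pro-lhd-BB_-} to distribute the dressing action over groupoid products, the extra terms reassemble into a single $T$-translation of the original product. Since the $T$-action is free and proper (via right multiplication on the $B$-factor of $\G^{\bbfu, \bbfu}$), the quotient $\K_{\bbfu, \bbfv}$ is a smooth complex manifold, and the local Lie groupoid structure of $\G^{\bbfu, \bbfu} \times_\ssL \G^{\bbfv, \bbfv}$ descends. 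The explicit formulas in part 1) are then precisely the descents of those in Theorem \ref{main-thm-gpoid}; the apparent indeterminacy in the choice of $\gamma = (b, b'_-, u, g)$ or $\gamma = (g, u, b'_{-1}, b_2)$ reflects the $\bar T^{(2)}$-ambiguity in the decomposition $\bar b \bar{\bar b}_- = \bar{\bar u} \bar g$ (respectively $\bar{\bar{b'}}_{-1} \bar b_2 = \bar g \bar{\bar u}$), which is absorbed by the quotient by $T$.

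Part 2) then follows essentially formally: $\pi_{\bbfu, \bbfu} \times \pi_{\bbfv, \bbfv}$ descends to a well-defined Poisson bivector $\pi_{\scriptscriptstyle \K_{\bbfu, \bbfv}}$ on the quotient, for which $\varpi_{\scriptscriptstyle \K_{\bbfu, \bbfv}}$ is a surjective Poisson submersion and a local Lie groupoid morphism. The graph of multiplication in $\K_{\bbfu, \bbfv}^3$ is the image under the surjective Poisson submersion $\varpi_{\scriptscriptstyle \K_{\bbfu, \bbfv}}^{\times 3}$ of the coisotropic multiplication graph in $(\G^{\bbfu, \bbfu} \times_\ssL \G^{\bbfv, \bbfv})^3$, hence is itself coisotropic with respect to $\pi_{\scriptscriptstyle \K_{\bbfu, \bbfv}} \times \pi_{\scriptscriptstyle \K_{\bbfu, \bbfv}} \times (-\pi_{\scriptscriptstyle \K_{\bbfu, \bbfv}})$, completing the proof. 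The main technical obstacle is the $T$-equivariance of the twisted multiplication, because the $T$-action on each factor interacts non-trivially with the dressing element $\gamma^{b'_{-1}, b_2}$, and verifying cancellation requires a careful combination of the cocycle identities \eqref{eq-gam-mult} with Proposition \ref{pro-lhd-BB_-}.
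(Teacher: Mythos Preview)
Your approach to Part 1) is essentially what the paper has in mind when it says the well-definedness and the local groupoid axioms are ``straightforward'': you are right that the content is the $T$-equivariance of the twisted structure maps, and your outline of how the cocycle identities \eqref{eq-gam-mult} and the twisted multiplicativity of Proposition \ref{pro-lhd-BB_-} combine to give this is on target.

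There is, however, a genuine gap in your argument for Part 2). You claim that the multiplication graph $Gr(\K_{\bbfu,\bbfv})$ is the image under $\varpi_{\scriptscriptstyle \K_{\bbfu,\bbfv}}^{\times 3}$ of the multiplication graph of $\G^{\bbfu,\bbfu}\times_\ssL\G^{\bbfv,\bbfv}$, but this is not true in general. In $\G^{\bbfu,\bbfu}\times_\ssL\G^{\bbfv,\bbfv}$ the product is only defined when the dressing element lies in the fixed section $O_\sGam\subset\Gamma$ of the finite covering $p:\Gamma\to D_0$, i.e.\ when $(\mu_-(\tilde z_1),\mu_+(\tilde y_2))\in O_{\sGs,\sG}$; whereas the multiplication on $\K_{\bbfu,\bbfv}$ described in Part 1) is defined whenever $\bar{\bar{b'}}_{-1}\bar{b}_2\in D_0$, using \emph{any} $\gamma\in\Gamma$ with the prescribed last two coordinates. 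The $T$-quotient absorbs the $T^{(2)}$-ambiguity among the preimages of a fixed point of $D_0$, but it does not enlarge $O_\sD=p(O_\sGam)$ to all of $D_0$: for a nontrivial finite covering the maximal injectivity domain need not surject (think of $z\mapsto z^2$ on $S^1$). So $\varpi_{\scriptscriptstyle \K_{\bbfu,\bbfv}}^{\times 3}$ applied to $Gr(\G^{\bbfu,\bbfu}\times_\ssL\G^{\bbfv,\bbfv})$ yields only an open piece of $Gr(\K_{\bbfu,\bbfv})$, and your coisotropy conclusion does not cover the whole graph.

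The paper closes exactly this gap: instead of working only with $\L=(O_\sGam)_{\diag}$, it lets $U\subset\Gamma$ range over \emph{all} open subsets on which $p|_U$ is a diffeomorphism. Each $U_{\diag}$ is again a local Lagrangian bisection of $(\Gamma_{\sB_-}\times\Gamma_\sB,(-\pi_\sGam)\times\pi_\sGam)$, so the corresponding twist of the direct-product graph is coisotropic, and its image under $\varpi_{\scriptscriptstyle\K_{\bbfu,\bbfv}}^{\times 3}$ is an open coisotropic piece of $Gr(\K_{\bbfu,\bbfv})$. Since every point of $D_0$ lies in $p(U)$ for some such $U$, these pieces cover $Gr(\K_{\bbfu,\bbfv})$, and coisotropy follows. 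You could repair your argument either by inserting this covering step, or by showing that the open piece you obtain is dense and invoking continuity.
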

\begin{proof}
Showing that the structure maps given in 1) are well-defined, satisfy the axioms of a local groupoid and that $\varpi_{\scriptscriptstyle \K_{\bbfu, \bbfv}}$ is a morphism of local groupoids is straightforward. Since $\varpi_{\scriptscriptstyle \K_{\bbfu, \bbfv}}$ is a Poisson map by construction, the only thing left to prove is that $(\K_{\bbfu, \bbfv}, \pi_{\scriptscriptstyle \K_{\bbfu, \bbfv}})$ is a local Poisson groupoid.

Let $U \subset \Gamma$ be an open subset such that the map $p$ defined in \eqref{eq-p} restricts to a diffeomorphism $p \! \mid_U: U \to p(U)$. Then the diagonal copy of $U$, 
$$
U_{\diag} \subset (\Gamma_\sB, -\pi_\sGam) \times (\Gamma_{\sB_-},  \pi_\sGam)
$$
is a local Lagrangian bisection. In particular, 
\begin{equation} \label{eq-proofK_uv}
\{(\tilde{y}_1, \: \tilde{z}_1 \lhd_{\sB_-} \gamma, \: \tilde{y}_2 \lhd_\sB \gamma, \: \tilde{y}_1 \tilde{y}_2, \: \tilde{z}_1 \tilde{z_2}) : \;\; (\tilde{y}_1, \tilde{y}_2) \in (\G^{\bar{\bfu}, \bar{\bfu}})^{(2)}, \; (\tilde{z}_1, \tilde{z}_2) \in (\G^{\bar{\bfv}, \bar{\bfv}})^{(2)}, \; \gamma \in U \}
\end{equation}
is the image of the graph of the direct product groupoid $\G^{\bar{\bfu}, \bar{\bfu}} \times \G^{\bar{\bfv}, \bar{\bfv}}$ by a local Poisson diffeomorphism, hence is coisotropic for the Poisson structure $\pi_{\bar{\bfu}, \bar{\bfu}} \times \pi_{\bar{\bfv}, \bar{\bfv}} \times \pi_{\bar{\bfu}, \bar{\bfu}} \times \pi_{\bar{\bfv}, \bar{\bfv}} \times (-\pi_{\bar{\bfu}, \bar{\bfu}}) \times (-\pi_{\bar{\bfv}, \bar{\bfv}})$ on $(\G^{\bar{\bfu}, \bar{\bfu}} \times \G^{\bar{\bfv}, \bar{\bfv}})^3$. By definition of the multiplication in $\K_{\bar{\bfu}, \bar{\bfv}}$, the image by $(\varpi_{\scriptscriptstyle \K_{\bar{\bfu}, \bar{\bfv}}})^3$ of the subset in \eqref{eq-proofK_uv} is an open subset in the graph $Gr(\K_{\bar{\bfu}, \bar{\bfv}})$ of the multiplication in $\K_{\bar{\bfu}, \bar{\bfv}}$. By varying $U$, it follows that $Gr(\K_{\bar{\bfu}, \bar{\bfv}})$ has an open covering by submanifolds coisotropic for the Poisson structure $\pi_{\scriptscriptstyle \K_{\bar{\bfu}, \bar{\bfv}}} \times \pi_{\scriptscriptstyle \K_{\bar{\bfu}, \bar{\bfv}}} \times (-\pi_{\scriptscriptstyle \K_{\bar{\bfu}, \bar{\bfv}}})$, hence is coisotropic. Thus $(\K_{\bar{\bfu}, \bar{\bfv}}, \pi_{\scriptscriptstyle \K_{\bar{\bfu}, \bar{\bfv}}})$ is a local Poisson groupoid. 
\end{proof}

\subsection{The concatenation map $\kappa_{\bfu, \bfv}$} \label{subsec-concatenation}

Let 
$$
\bfw = (\bfu, \bfv) = (u_1, \ldots, u_n, v_1, \ldots, v_m), \hs \bar{\bfw} = (\bbfu, \bbfv) = (\bar{u}_1, \ldots, \bar{u}_n, \bar{v}_1, \ldots, \bar{v}_m),
$$
where $\bfu = (u_1, \ldots, u_n)$ and $\bfv = (v_1, \ldots, v_m)$. Let $\kappa_{\bfu, \bfv}: G^{\bfu, \bfu} \times G^{\bfv, \bfv} \to G^{\bfw, \bfw}$ be the map 
\begin{align*}
\kappa_{\bfu, \bfv} & \left( ([g_1, \ldots, g_n]_{\sF_n}, [h_1, \ldots, h_n]_{\sF'_{-n}}), \: ([g'_1, \ldots, g'_m]_{\sF_m}, [h'_1, \ldots, h'_m]_{\sF'_{-m}}) \right) =   \\
  & \left( [g_1, \ldots, g_n, g'_1, \ldots, g'_m]_{\sF_{n+m}}, [h_1, \ldots, h_n, h'_1, \ldots, h'_m]_{\sF'_{-n-m}} \right),  \hs g'_i, g'_i, h_i, h'_i \in G. 
\end{align*}

By \cite[Proposition 8.3]{Lu-Mou:mixed}, $\kappa_{\bfu, \bfv}: (G^{\bfu, \bfu}, \pi_{n, n}) \times (G^{\bfv, \bfv}, \pi_{m, m}) \to (G^{\bfw, \bfw}, \pi_{n+m, n+m})$ is a Poisson map.

\begin{lem} \label{lem-kappa_uv}
1) The image of $\kappa_{\bfu, \bfv}$ is the Zariski open subset 
\begin{align*}
(G^{\bfw, \bfw})_0 & = \{ ([g_1, \ldots, g_{n+m}]_{\sF_{n+m}}, [h_1, \ldots, h_{n+m}]_{\sF'_{-n-m}}) \in G^{\bfw, \bfw} : \\
   & \hs \hs (h_1 \cdots h_n)^{-1}g_1 \cdots g_n \in B_-B \}. 
\end{align*}

2) The map $\kappa_{\bfu, \bfv}$ descends to a diffeomorphism $\kappa_{\bfu, \bfv}: K_{\bfu, \bfv} \stackrel{\cong}{\to} (G^{\bfw, \bfw})_0$. 
\end{lem}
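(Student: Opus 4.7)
My plan is to establish both parts of the Lemma together by explicitly constructing the inverse of $\kappa_{\bfu,\bfv}$. For part (1), the inclusion $\im(\kappa_{\bfu,\bfv}) \subset (G^{\bfw,\bfw})_0$ should be immediate: for $(y,z) \in G^{\bfu,\bfu} \times G^{\bfv,\bfv}$ with $y = ([g_1,\ldots,g_n]_{\tilde{F}_n}, [h_1,\ldots,h_n]_{\tilde{F}_{-n}})$, the defining relation $g_1\cdots g_n = h_1\cdots h_n$ of $G^{\bfu,\bfu}$ provides representatives of $\kappa_{\bfu,\bfv}(y,z)$ for which $(h_1\cdots h_n)^{-1}(g_1\cdots g_n) = e \in B_-B$. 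For the reverse inclusion and to construct the inverse, given $P = ([g_1,\ldots,g_{n+m}]_{\tilde F_{n+m}}, [h_1,\ldots,h_{n+m}]_{\tilde F_{-n-m}}) \in (G^{\bfw,\bfw})_0$, I would factor $(h_1\cdots h_n)^{-1}(g_1\cdots g_n) = b_-^0 b^0$ with $b_-^0 \in B_-$, $b^0 \in B$, and set
\begin{align*}
y &= \left( [g_1,\ldots,g_n(b^0)^{-1}]_{\tilde{F}_n},\; [h_1,\ldots,h_n b_-^0]_{\tilde{F}_{-n}} \right),\\
z &= \left( [b^0 g_{n+1}, g_{n+2},\ldots, g_{n+m}]_{\tilde{F}_m},\; [(b_-^0)^{-1} h_{n+1}, h_{n+2}, \ldots, h_{n+m}]_{\tilde{F}_{-m}} \right).
\end{align*}
The factorization ensures $g_1\cdots g_n(b^0)^{-1} = h_1\cdots h_n b_-^0$, so that $y \in G^{\bfu,\bfu}$; combined with $g_1\cdots g_{n+m} = h_1\cdots h_{n+m}$ (which holds since $P \in G^{\bfw,\bfw}$), this forces the analogous equality for the $\bfv$-sequences, giving $z \in G^{\bfv,\bfv}$ and $\kappa_{\bfu,\bfv}(y,z) = P$.

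For part (2), $T$-invariance of $\kappa_{\bfu,\bfv}$ should follow immediately from the observation that the $T$-action of $\S\ref{subsec-K_uv}$ inserts a factor $t \cdot t^{-1}$ between the $n$-th and $(n+1)$-th slots of the $g$- and $h$-sequences, which is absorbed by the equivalence relations defining $\tilde F_{n+m}$ and $\tilde F_{-n-m}$. The key observation for injectivity of the descended map will be that the factorization $(h_1 \cdots h_n)^{-1}(g_1 \cdots g_n) = b_-^0 b^0$ is unique up to $(b_-^0, b^0) \mapsto (b_-^0 t, t^{-1} b^0)$ for $t \in B_- \cap B = T$. I would substitute this ambiguity into the formulas for $y$ and $z$ above and verify that the resulting preimage transforms by a $T$-translate under the action on $G^{\bfu,\bfu} \times G^{\bfv,\bfv}$ specified in $\S\ref{subsec-K_uv}$; combined with the fact that any lift of $P$ must be built from some such factorization, this establishes injectivity of the descended map.

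Finally, smoothness of the inverse should follow from the fact that the multiplication $B_- \times B \to B_-B$ is a holomorphic principal $T$-bundle (for the action $t \cdot (b_-, b) = (b_- t, t^{-1} b)$) and so admits holomorphic local sections: composing such a section with the rewriting of representatives above yields a holomorphic inverse to the descended $\kappa_{\bfu,\bfv}$, and Zariski-openness of $B_-B$ in $G$ immediately gives Zariski-openness of $(G^{\bfw,\bfw})_0$ in $G^{\bfw,\bfw}$. The main bookkeeping obstacle will be verifying the exact match between the $T$-indeterminacy of the factorization in $B_-B$ and the rather elaborate $T$-action on $G^{\bfu,\bfu} \times G^{\bfv,\bfv}$ given in $\S\ref{subsec-K_uv}$; once this match is pinned down the remaining verifications are formal.
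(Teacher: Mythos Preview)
Your proposal is correct and follows essentially the same approach as the paper: both construct an explicit preimage by factoring $(h_1\cdots h_n)^{-1}g_1\cdots g_n$ in $B_-B$ and redistributing the factors between the $\bfu$- and $\bfv$-blocks (the paper writes the factorization as $b_-b^{-1}$ rather than your $b_-^0 b^0$, a purely notational difference). The paper dismisses part~2) as ``clear,'' whereas you spell out the match between the $T$-ambiguity of the $B_-B$ factorization and the $T$-action of \S\ref{subsec-K_uv}, together with the smoothness argument via local sections; these details are correct and fill in what the paper leaves implicit.
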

\begin{proof}
It is clear that $\kappa_{\bfu, \bfv}$ maps into $(G^{\bfw, \bfw})_0$. Conversely, if 
$$
([g_1, \ldots, g_{n+m}]_{\sF_{n+m}}, [h_1, \ldots, h_{n+m}]_{\sF'_{-n-m}}) \in (G^{\bfw, \bfw} )_0,
$$
write $(h_1 \cdots h_n)^{-1}g_1 \cdots g_n = b_-b^{-1}$, with $b_- \in B_-$ and $b \in B$. Then 
\begin{align*}
&([g_1, \ldots, g_{n+m}]_{\sF_{n+m}}, [h_1, \ldots, h_{n+m}]_{\sF'_{-n-m}}) = \\
& \kappa_{\bfu, \bfv}\left( ([g_1, \ldots, g_nb]_{\sF_n}, [h_1, \ldots, h_nb_-]_{\sF'_{-n}}), \: ([b^{-1}g'_1, \ldots, g'_m]_{\sF_m}, [b_-^{-1}h'_1, \ldots, h'_m]_{\sF'_{-m}}) \right).
\end{align*}
Part 2) is clear. 
\end{proof}

Let $\kappa_{\bar{\bfu}, \bar{\bfv}}: (\G^{\bbfu, \bbfu}, \pi_{\bbfu, \bbfu}) \times (\G^{\bbfv, \bbfv}, \pi_{\bbfv, \bbfv}) \to (\G^{\bar{\bfw}, \bar{\bfw}}, \pi_{\bbfw, \bbfw})$ be the Poisson map obtained by precomposing $\kappa_{\bfu, \bfv}$ with $J_{\bar{\bfu}, \bar{\bfu}} \times J_{\bar{\bfv}, \bar{\bfv}}$ and postcomposing with $J_{\bar{\bfw}, \bar{\bfw}}^{-1}$, that is 
\begin{align}
\kappa_{\bbfu, \bbfv} & \left( ([c]_{\sF_n}, \: b, \: b_-, \: [c_-]_{\sF_n}), \; ([c']_{\sF_m}, \: b', \: b'_-, \: [c'_-]_{\sF_m}) \right) = \label{eq-k_uv}   \\
   & \; ( [c, b[c']]_{\sF_{n+m}},\;  b_{\bbfv}(b, c')b', \; b_-b_{-\bbfu}(b'_-, c_-), \; [c_-^{b'_-}, c'_-]_{\sF_{n+m}}),   \notag
\end{align}
where $([c]_{\sF_n}, \: b, \: b_-, \: [c_-]_{\sF_n}) \in \G^{\bbfu, \bbfu}$ and $([c']_{\sF_m}, \: b', \: b'_-, \: [c'_-]_{\sF_m}) \in \G^{\bbfv, \bbfv}$. By Lemma \ref{lem-kappa_uv}, the image of $\kappa_{\bbfu, \bbfv}$ is $(\G^{\bbfw, \bbfw})_0 = J_{\bar{\bfw}, \bar{\bfw}}^{-1}((G^{\bfw, \bfw})_0)$ and $\kappa_{\bbfu, \bbfv}$ descends to an diffeomorphism $\kappa_{\bar{\bfu}, \bar{\bfv}}: \K_{\bar{\bfu}, \bar{\bfv}} \cong (\G^{\bar{\bfw}, \bar{\bfw}})_0$. As $(\G^{\bar{\bfw}, \bar{\bfw}})_0$ is an open neighborhood of the identity bisection of $\G^{\bar{\bfw}, \bar{\bfw}}$, one can {\it shrink} \cite{assoc_int} the groupoid structure on $\G^{\bar{\bfw}, \bar{\bfw}}$ to a local groupoid structure on $(\G^{\bar{\bfw}, \bar{\bfw}})_0$, where the multiplication is defined on 
$$
(\G^{\bar{\bfw}, \bar{\bfw}})_0^{(2)} := m_{\bar{\bfw}, \bar{\bfw}}^{-1}((\G^{\bar{\bfw}, \bar{\bfw}})_0) \cap \{(\tilde{x}_1, \tilde{x}_2) \in ((\G^{\bar{\bfw}, \bar{\bfw}})_0)^2 : \; \tau_{\bbfw}(\tilde{x}_1) = \theta_{\bbfw}(\tilde{x}_2)\},
$$
where $m_{\bar{\bfw}, \bar{\bfw}}$ is the multiplication map on $\G^{\bar{\bfw}, \bar{\bfw}}$, and the inverse is defined on 
$$
(\G^{\bar{\bfw}, \bar{\bfw}})_0^{(-1)} := (\G_{\bar{\bfw}, \bar{\bfw}})_0 \cap \iota_{\bar{\bfw}}((\G_{\bar{\bfw}, \bar{\bfw}})_0). 
$$
The remainder of $\S$\ref{subsec-concatenation} is devoted to proving the following 

\begin{pro} \label{pro-kappa_uv}
One has an isomorphism of local groupoids 
$$
\kappa_{\bbfu, \bbfv}: (\K_{\bbfu, \bbfv} \rightrightarrows \O^{\bfu} \times \O^{\bfv}) \cong ((\G^{\bar{\bfw}, \bar{\bfw}})_0 \rightrightarrows \O^{\bfw}). 
$$
\end{pro}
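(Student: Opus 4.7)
The plan is to verify that $\kappa_{\bbfu, \bbfv}$ intertwines the local groupoid structures on its domain and codomain; by Lemma \ref{lem-kappa_uv} the underlying map of manifolds is already a diffeomorphism. The induced base diffeomorphism is the natural concatenation
\[
\phi: \O^{\bfu} \times \O^{\bfv} \to \O^{\bfw}, \quad ([c]_{\sF_n}, [c']_{\sF_m}) \mapsto [c, c']_{\sF_{n+m}},
\]
which is a bijection because $C_{\bbfw} = C_{\bbfu} \times C_{\bbfv}$ and each of $\O^{\bfu}, \O^{\bfv}, \O^{\bfw}$ is parameterized by the corresponding $C_{\bullet}$ via the restriction of $\varpi_{\sF_\bullet}$.

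Compatibility of source, target, and identity bisection follows by matching the defining formulas. Applying $\theta_{\bbfw}$ to $\kappa_{\bbfu, \bbfv}[\tilde y, \tilde z]$ from \eqref{eq-k_uv} yields $[c, b[c']]_{\sF_{n+m}}$, which equals $\phi(\theta_{\bbfu, \bbfv}[\tilde y, \tilde z])$ since $\mu_+(\tilde y)[\theta_{\bbfv}(\tilde z)]$ is represented by $b[c'] \in C_{\bbfv}$. The target map is handled analogously. For the identity bisection, substituting $b = b' = e$ and $b_- = b'_- = e$ in \eqref{eq-k_uv} and using $b_{\bbfu}(e, \cdot) = e$, $b_{-\bbfu}(e, \cdot) = e$, and $e[\,\cdot\,] = \mathrm{id}$ gives the identity bisection of $\G^{\bbfw, \bbfw}$ at $\phi([c]_{\sF_n}, [c']_{\sF_m})$.

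The core of the proof is compatibility of multiplications. Fixing composable pairs $[\tilde y_1, \tilde z_1], [\tilde y_2, \tilde z_2] \in \K_{\bbfu, \bbfv}$, the source/target matching forces $c_2 = c_{1,-}^{b'_{1,-}}$ and $c'_{1,-} = b_2[c'_2]$. On the $\G^{\bbfw, \bbfw}$ side, the product of $\kappa_{\bbfu, \bbfv}[\tilde y_i, \tilde z_i]$ just multiplies Borel components, yielding $B$-part $b_{\bbfv}(b_1, c'_1)b'_1 \cdot b_{\bbfv}(b_2, c'_2)b'_2$ and $B_-$-part $b_{1,-}b_{-\bbfu}(b'_{1,-}, c_{1,-}) \cdot b_{2,-}b_{-\bbfu}(b'_{2,-}, c_{2,-})$. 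On the $\K_{\bbfu, \bbfv}$ side, one expands $\gamma \rhd_\sB \tilde y_2$ and $\gamma \rhd_{\sB_-} \tilde z_1$ for the twist $\gamma = (g, u, b'_{1,-}, b_2) \in \Gamma$ using Proposition \ref{pro-lhd-BB_-}, composes in $\G^{\bbfu, \bbfu}$ and $\G^{\bbfv, \bbfv}$, and then applies $\kappa_{\bbfu, \bbfv}$, obtaining Borel components that a priori depend on $g$ and $u$.

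The main obstacle, which occupies the heart of the proof, is showing these two expressions coincide. The key inputs are the cocycle identities
\[
b_{\bbfv}(b_1 b_2, c') = b_{\bbfv}(b_1, b_2[c']) \, b_{\bbfv}(b_2, c'), \qquad b_{-\bbfu}(u_1 u_2, c) = b_{-\bbfu}(u_1, c) \, b_{-\bbfu}(u_2, c^{u_1}),
\]
which follow directly from the defining equations $b\underline c = \underline{b[c]}\, b_{\bbfu}(b, c)$ and $\underline c\, b_- = b_{-\bbfu}(b_-, c)\, \underline{c^{b_-}}$ by associativity in $G$. These cocycles force cancellations such as $b_{\bbfv}(b_1 g, g^{-1}[c'_1])\, b_{\bbfv}(g^{-1}, c'_1) = b_{\bbfv}(b_1, c'_1)$ and $b_{-\bbfu}(u^{-1}, c_{2,-})\, b_{-\bbfu}(u, c_{2,-}^{u^{-1}}) = e$, eliminating all dependence on $\gamma$. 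Combined with the composability-derived identity $b_{-\bbfu}(b'^{-1}_{1,-}, c_2)^{-1} = b_{-\bbfu}(b'_{1,-}, c_{1,-})$, this produces exactly the Borel factors appearing on the $\G^{\bbfw, \bbfw}$ side. Compatibility with the inverse map then follows formally from the preceding compatibilities and the uniqueness of local groupoid inverses.
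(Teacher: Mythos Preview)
Your approach is essentially the same as the paper's --- verify source/target/identity compatibility directly, then reduce the multiplication check to the cocycle identities for $b_{\bbfv}$ and $b_{-\bbfu}$ together with the composability identity $b_{-\bbfu}((b'_{-1})^{-1}, c_2)^{-1} = b_{-\bbfu}(b'_{-1}, c_{-1})$ (the paper isolates this as Lemma~\ref{lem1-kappa_uv}). That part is fine.

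There is, however, a genuine gap. You only show that \emph{when} $[\tilde y_1,\tilde z_1]$ and $[\tilde y_2,\tilde z_2]$ are composable in $\K_{\bbfu,\bbfv}$, the image of their product equals the product of their images in $\G^{\bbfw,\bbfw}$. For an \emph{isomorphism} of local groupoids you must also check that the domains of the partial multiplication correspond: given $\tilde x_1,\tilde x_2 \in (\G^{\bbfw,\bbfw})_0$ composable in $\G^{\bbfw,\bbfw}$, the product $\tilde x_1\tilde x_2$ lies in $(\G^{\bbfw,\bbfw})_0$ if and only if the preimages are composable in $\K_{\bbfu,\bbfv}$ (i.e.\ a $\gamma=(g,u,b'_{-1},b_2)\in\Gamma$ exists). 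This is not automatic: the shrunk groupoid $(\G^{\bbfw,\bbfw})_0$ has its multiplication restricted by the condition $m_{\bbfw,\bbfw}^{-1}((\G^{\bbfw,\bbfw})_0)$, while $\K_{\bbfu,\bbfv}$ has its multiplication restricted by $\bar{\bar{b}}'_{-1}\bar b_2\in\bar B\bar{\bar B}_-\cap\bar{\bar B}_-\bar B$, and one must show these match. In the paper this is a nontrivial computation (the paragraph beginning ``Conversely, suppose given\dots''): one unwinds the $B_-B$-membership condition defining $(\G^{\bbfw,\bbfw})_0$ for $\tilde x_1\tilde x_2$, uses Lemma~\ref{lem1-kappa_uv} and the defining relation $\underline c\,b=b_-\underline{c_-}$ repeatedly, and reduces it to $(b'_{-1}b_2)^{-1}\in B_-B$, which is exactly the existence of such a $\gamma$. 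The same issue arises for the inverse map and its domain $(\G^{\bbfw,\bbfw})_0^{(-1)}$; your ``follows formally'' covers compatibility where both are defined, but not the matching of domains. Without these verifications you have a morphism of local groupoids, not an isomorphism.
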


Fix for $i = 1,2$ elements 
\begin{align}
\tilde{y}_i & = ([c_i]_{\sF_n}, \: b_i, \: b_{-i}, \: [c_{-i}]_{\sF_n}) \in \G^{\bbfu, \bbfu}, \hs c_i, c_{-i} \in C_{\bbfu}, \; b_i \in B , \;  b_{-i} \in B_-, \label{eq-write-y_iz_i} \\
\tilde{z}_i & =   ([c'_i]_{\sF_m}, \: b'_i, \: b'_{-i}, \: [c'_{-i}]_{\sF_m})  \in \G^{\bbfv, \bbfv} , \hs c'_i, c'_{-i} \in C_{\bbfv}, \; b'_i \in B , \;  b'_{-i} \in B_-, \notag
\end{align}
such that $[\tilde{y}_1, \tilde{z}_1]$ and $[\tilde{y}_2, \tilde{z}_2]$ are composable in $\K_{\bbfu, \bbfv}$, that is 
\begin{equation} \label{eq-source=target}
(c_{-1}^{b'_{-1}}, c'_{-1}) = (c_2, b_2[c'_2]),
\end{equation}
and choose a 
\begin{equation} \label{eq-gamma-kappa_uv}
\gamma = (g, u, b'_{-1}, b_2) \in \Gamma, \hs \text{with} \hs  g \in B, \; u \in B_-. 
\end{equation}

\begin{lem} \label{lem1-kappa_uv}
One has
\begin{align*}
b_{\bbfv}(b_2^{-1}, c'_{-1})^{-1} & = b_{\bbfv}(b_2, c'_2),    \\
b_{-\bbfu}((b'_{-1})^{-1}, c_2)^{-1} & = b_{-\bbfu}(b'_{-1}, c_{-1}). 
\end{align*}
\end{lem}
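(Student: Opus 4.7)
The plan is to read off both identities directly from the defining relations
\[
b \underline{c'} = \underline{b[c']}\, b_{\bbfv}(b, c'), \qquad \underline{c}\, b_- = b_{-\bbfu}(b_-, c)\, \underline{c^{b_-}},
\]
together with the composability condition \eqref{eq-source=target}, which unpacks as $c'_{-1} = b_2[c'_2]$ and $c_{-1}^{b'_{-1}} = c_2$. Since $(b, c') \mapsto b[c']$ is a left action of $B$ on $C_{\bbfv}$ and $(c, b_-) \mapsto c^{b_-}$ is a right action of $B_-$ on $C_{\bbfu}$, one automatically has $b_2^{-1}[c'_{-1}] = c'_2$ and $c_2^{(b'_{-1})^{-1}} = c_{-1}$, which is all that is needed.

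For the first identity, I would apply the defining relation for $b_{\bbfv}$ twice. First, with $(b, c') = (b_2, c'_2)$, giving
\[
b_2 \underline{c'_2} \;=\; \underline{b_2[c'_2]}\, b_{\bbfv}(b_2, c'_2) \;=\; \underline{c'_{-1}}\, b_{\bbfv}(b_2, c'_2).
\]
Second, with $(b, c') = (b_2^{-1}, c'_{-1})$, giving $b_2^{-1} \underline{c'_{-1}} = \underline{c'_2}\, b_{\bbfv}(b_2^{-1}, c'_{-1})$. Left-multiplying the first equation by $b_2^{-1}$ and comparing with the second yields $b_{\bbfv}(b_2^{-1}, c'_{-1}) = b_{\bbfv}(b_2, c'_2)^{-1}$, which is the first identity after inversion.

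The second identity is proved in the symmetric fashion, by applying the defining relation for $b_{-\bbfu}$ once with $(b_-, c) = (b'_{-1}, c_{-1})$ and once with $(b_-, c) = ((b'_{-1})^{-1}, c_2)$. The first gives $\underline{c_{-1}}\, b'_{-1} = b_{-\bbfu}(b'_{-1}, c_{-1})\, \underline{c_2}$; right-multiplying by $(b'_{-1})^{-1}$ and comparing with the second, which reads $\underline{c_2}(b'_{-1})^{-1} = b_{-\bbfu}((b'_{-1})^{-1}, c_2)\, \underline{c_{-1}}$, forces $b_{-\bbfu}((b'_{-1})^{-1}, c_2)\, b_{-\bbfu}(b'_{-1}, c_{-1}) = e$, which is exactly the claim. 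There is no substantive obstacle: the only inputs are the definitions of $b_{\bbfv}, b_{-\bbfu}$ and the two action identities extracted from \eqref{eq-source=target}.
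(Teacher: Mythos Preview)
Your proof is correct and essentially identical to the paper's own argument: both apply the defining relations for $b_{\bbfv}$ and $b_{-\bbfu}$ twice, using the composability condition \eqref{eq-source=target} to identify $b_2[c'_2] = c'_{-1}$ and $c_{-1}^{b'_{-1}} = c_2$, and then compare the two resulting expressions. The only minor addition in your write-up is the explicit remark that the action property yields $b_2^{-1}[c'_{-1}] = c'_2$ and $c_2^{(b'_{-1})^{-1}} = c_{-1}$, which the paper leaves implicit.
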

\begin{proof}
By \eqref{eq-source=target} and recalling our notation in \eqref{eq-ddot}, 
\begin{align*}
b_2\underline{c'_2} & = \underline{b_2[c'_2]} b_{\bbfv}(b_2, c'_2) = \underline{c'_{-1}}b_{\bbfv}(b_2, c'_2), \\ 
(b_2)^{-1}\underline{c'_{-1}} & = \underline{(b_2)^{-1}[c'_{-1}]} b_{\bbfv}((b_2)^{-1}, c'_{-1}) = \underline{c'_2} b_{\bbfv}(b_2^{-1}, c'_{-1}),
\end{align*}
thus $b_{\bbfv}(b_2^{-1}, c'_{-1})^{-1} = b_{\bbfv}(b_2, c'_2)$, and the second relation is proven similarly. 
\end{proof}

\noindent
{\it Proof of Proposition \ref{pro-kappa_uv}.}
It is easily seen that $\kappa_{\bbfu, \bbfv}$ commutes with the respective source and target maps of $\K_{\bbfu, \bbfv}$ and $(\G^{\bar{\bfw}, \bar{\bfw}})_0$. By Proposition \ref{pro-Kuv-gpoid}, one has 
\begin{align*}
[\tilde{y}_1, \tilde{z}_1] \cdot [\tilde{y}_2, \tilde{z}_2] & = \left[ \tilde{y}_1 \left(\gamma \rhd_\sB \tilde{y}_2 \right), \;\; \left( \gamma \rhd_{\sB_-} \tilde{z}_1 \right) \tilde{z}_2 \right], \\
   & = \left[ ([c_1]_{\sF_n}, \: b_1g, \: b_{-1} \mu_{-\bfu}(\gamma \rhd_\sB \tilde{y}_2), \: [c_{-2}^{u^{-1}}]_{\sF_n}), \right.   \\
   & \hs \; \left.   ([g^{-1}[c'_1]]_{\sF_m}, \: \mu_{\bfv}(\gamma \rhd_{\sB_-} \tilde{z}_1)b'_2, \: ub'_{-2}, \: [c'_{-2}]_{\sF_m}) \right], 
\end{align*}
where recall that by definition of $\lhd_\sB$, $\lhd_{\sB_-}$, 
\begin{align*}
b_{-\bbfu}((b'_{-1})^{-1}, c_2)\mu_{-\bfu}(\gamma \rhd_\sB \tilde{y}_2) & = b_{-2} b_{-\bbfu}(u^{-1}, c_{-2}),   \\
\mu_{\bfv}(\gamma \rhd_{\sB_-} \tilde{z}_1) b_{\bbfv}(b_2^{-1}, c'_{-1}) & = b_{\bbfv}(g^{-1}, c'_1) b'_1. 
\end{align*}
Hence by Lemma \ref{lem1-kappa_uv}, 
\begin{align*}
b_1g \underline{g^{-1}[c'_1]} \mu_{\bfv}(\gamma \rhd_{\sB_-} \tilde{z}_1)b'_2 & = b_1\underline{c'_1}b_{\bbfv}(g^{-1}, c'_1)^{-1}\mu_{\bfv}(\gamma \rhd_{\sB_-} \tilde{z}_1)b'_2   \\   
    & = b_1\underline{c'_1}b'_1b_{\bbfv}(b_2^{-1}, c'_{-1})^{-1}b'_2   \\
   & = b_1\underline{c'_1}b'_1b_{\bbfv}(b_2, c'_2)b'_2 = \underline{b_1[c'_1]} b_{\bbfv}(b_1, c'_1)b'_1b_{\bbfv}(b_2, c'_2)b'_2,     \\
b_{-1}\mu_{-\bfu}(\gamma \rhd_\sB \tilde{y}_2)\underline{(c_{-2})^{u^{-1}}}ub'_{-2} & = b_{-1}\mu_{-\bfu}(\gamma \rhd_\sB \tilde{y}_2) b_{-\bbfu}(u^{-1}, c_{-2})^{-1}\underline{c_{-2}}b'_{-2}   \\
   & = b_{-1}b_{-\bbfu}((b'_{-1})^{-1}, c_2)^{-1}b_{-2}\underline{c_{-2}}b'_{-2}   \\  
   & = b_{-1}b_{-\bbfu}(b'_{-1}, c_{-1})b_{-2}\underline{c_{-2}}b'_{-2}   \\
   & = b_{-1}b_{-\bbfu}(b'_{-1}, c_{-1})b_{-2}b_{-\bbfu}(b'_{-2}, c_{-2})\underline{(c_{-2})^{b'_{-2}}},
\end{align*}
and so by \eqref{eq-k_uv}, one has 
\begin{align}
&\kappa_{\bbfu, \bbfv}([\tilde{y}_1, \tilde{z}_1] \cdot [\tilde{y}_2, \tilde{z}_2]) = \notag  \\
& ([c_1, b[c'_1]]_{\sF_{n+m}}, \; b_{\bbfv}(b_1, c'_1)b'_1b_{\bbfv}(b_2, c'_2)b'_2, \;\;  b_{-1}b_{-\bbfu}(b'_{-1}, c_{-1})b_{-2}b_{-\bbfu}(b'_{-2}, c_{-2}), \; [c_{-2}^{b'_{-2}}, c'_{-2}]_{\sF_{n+m}}) \label{eq-mult-y_iz_i} \\
& = \kappa_{\bbfu, \bbfv}([\tilde{y}_1, \tilde{z}_1]) \kappa_{\bbfu, \bbfv}([\tilde{y}_2, \tilde{z}_2)]. \notag 
\end{align}

Conversely, suppose given $\tilde{x}_i \in (\G^{\bbfw, \bbfw})_0$, $i = 1, 2$, composable in $\G^{\bbfw, \bbfw}$, and let $\tilde{y}_i \in \G^{\bbfu, \bbfu}$, $\tilde{z}_i \in \G^{\bbfv, \bbfv}$, such that $\tilde{x}_i = \kappa_{\bbfu, \bbfv}[\tilde{y}_i, \tilde{z}_i]$. Writing $\tilde{y}_i$ and $\tilde{z}_i$ as in \eqref{eq-write-y_iz_i}, $\tilde{x}_1 \tilde{x}_2$ is given by \eqref{eq-mult-y_iz_i}, so by Lemma \ref{lem-kappa_uv}, $\tilde{x}_1\tilde{x}_2$ lies in $(\G^{\bar{\bfw}, \bar{\bfw}})_0$ if and only if 
\begin{align*}
B_-B  & \ni (b_{-1}b_{-\bbfu}(b'_{-1}, c_{-1})b_{-2}b_{-\bbfu}(b'_{-2}, c_{-2}) \underline{c_2^{b'_{-2}}})^{-1} \underline{c_1}   \\
   & = (\underline{c_{-2}}b'_{-2})^{-1}b_{-2}^{-1}b_{-\bbfu}(b'_{-1}, c_{-1})^{-1}b_{-1}^{-1} \underline{c_1}   \\
   & =  (b'_{-2})^{-1}(b_{-2}\underline{c_{-2}})^{-1}b_{-\bbfu}((b'_{-1})^{-1}, c_2)b_{-1}^{-1} \underline{c_1}    \\
   & = (b_2b'_{-2})^{-1}\underline{c_2}^{-1}b_{-\bbfu}((b'_{-1})^{-1}, c_2)\underline{c_{-1}}b_1^{-1}  \\
   & = (b_2b'_{-2})^{-1} (b'_{-1})^{-1} (\underline{c_2^{(b'_{-1})^{-1}}})^{-1}\underline{c_{-1}}b_1^{-1} \\
   & = (b_2b'_{-2})^{-1} ( b_1b'_{-1})^{-1}.
\end{align*}
We have used Lemma \ref{lem1-kappa_uv} in the third line above, the definition \eqref{eq-G^uu} of $\G^{\bbfu, \bbfu}$ in the fourth line, and \eqref{eq-source=target} in the last. Thus $\tilde{x}_1 \tilde{x}_2 \in (\G^{\bar{\bfw}, \bar{\bfw}})_0$ precisely when $(b'_{-1}b_2)^{-1} \in B_-B$, which is equivalent to the existence of a $\gamma \in \Gamma$ as in \eqref{eq-gamma-kappa_uv}. Hence $[\tilde{y}_1, \tilde{z}_1]$ and $[\tilde{y}_1, \tilde{z}_1]$ are composable in $\K_{\bbfu, \bbfv}$ precisely when $\tilde{x}_1 \tilde{x}_2 \in (\G^{\bar{\bfw}, \bar{\bfw}})_0$, and in such a case one has $\tilde{x}_1 \tilde{x}_2 = \kappa_{\bbfu, \bbfv}([\tilde{y}_1, \tilde{z}_1]\cdot [\tilde{y}_1, \tilde{z}_1])$. A similar calculation shows that $\kappa_{\bbfu, \bbfv}$ commutes with the respective inverse groupoid maps, and that an element $[\tilde{y}, \tilde{z}] \in \K_{\bbfu, \bbfv}$ is invertible precisely when $\kappa_{\bbfu, \bbfv}[\tilde{y}, \tilde{z}]$ is invertible in $(\G^{\bar{\bfw}, \bar{\bfw}})_0$. This concludes the proof of Proposition \ref{pro-kappa_uv}.
\qed

\begin{cor}
The map
$$
I_{\bfu, \bfv}: (\O^{\bfu} \times \O^{\bfv}, \pi_n \times_{(\rho_{\bfu}, \lambda_{\bfv})} \pi_m) \to (\O^{\bfw}, \pi_{n+m}), \hs I_{\bfu, \bfv}([c]_{\sF_n}, [c']_{\sF_m}) = [c, c']_{\sF_{n+m}}, 
$$
where $c \in C_{\bbfu}$, $c' \in C_{\bbfv}$, 
is an isomorphism of Poisson manifolds. 
\end{cor}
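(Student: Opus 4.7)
The plan is to deduce the corollary from Proposition \ref{pro-kappa_uv} by descending the isomorphism $\kappa_{\bbfu, \bbfv}: \K_{\bbfu, \bbfv} \cong (\G^{\bbfw, \bbfw})_0$ to the base manifolds via the source maps of the two local Poisson groupoids.

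First I would check that $I_{\bfu, \bfv}$ is a biholomorphism. Since $\bbfw = (\bbfu, \bbfv)$, one has $C_{\bbfw} = C_{\bbfu} \times C_{\bbfv}$, and via the diffeomorphisms $\varpi_{\sF_n}\!\mid_{C_{\bbfu}}$, $\varpi_{\sF_m}\!\mid_{C_{\bbfv}}$, and $\varpi_{\sF_{n+m}}\!\mid_{C_{\bbfw}}$ of $\S$\ref{subsec-gen-bru-cell}, $I_{\bfu, \bfv}$ is identified with the identity map on $C_{\bbfu} \times C_{\bbfv}$.

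Next, a direct computation using \eqref{eq-k_uv} together with the source-map formula in Proposition \ref{pro-Kuv-gpoid} shows that the square
$$
\begin{tikzcd}
\K_{\bbfu, \bbfv} \arrow[d, "\theta_{\bbfu, \bbfv}"'] \arrow[r, "\kappa_{\bbfu, \bbfv}"] & (\G^{\bbfw, \bbfw})_0 \arrow[d, "\theta_{\bbfw}"] \\
\O^{\bfu} \times \O^{\bfv} \arrow[r, "I_{\bfu, \bfv}"'] & \O^{\bfw}
\end{tikzcd}
$$
commutes: both compositions send $[([c]_{\sF_n}, b, b_-, [c_-]_{\sF_n}), \; ([c']_{\sF_m}, b', b'_-, [c'_-]_{\sF_m})]$ to $[c, b[c']]_{\sF_{n+m}}$. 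I would then show the three arrows other than $I_{\bfu, \bfv}$ are Poisson (with the claimed Poisson structures on the bases) and conclude, since $\theta_{\bbfu, \bbfv}$ is a surjective submersion, that $I_{\bfu, \bfv}$ must be Poisson as well, hence a Poisson isomorphism.

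The map $\kappa_{\bbfu, \bbfv}$ is a Poisson isomorphism because $\kappa_{\bfu, \bfv}: (G^{\bfu, \bfu}, \pi_{n, n}) \times (G^{\bfv, \bfv}, \pi_{m, m}) \to (G^{\bfw, \bfw}, \pi_{n+m, n+m})$ is Poisson by \cite[Proposition 8.3]{Lu-Mou:mixed} and this descends through the Poisson surjective submersion $\varpi_{\scriptscriptstyle \K_{\bbfu, \bbfv}}$. The source map $\theta_{\bbfw}: (\G^{\bbfw, \bbfw}, \pi_{\bbfw, \bbfw}) \to (\O^\bfw, \pi_{n+m})$ is Poisson by direct inspection of \eqref{eq-pi_uu} with $\bfu = \bfv = \bfw$: projection to the first factor annihilates every mixing term, leaving only $\pi_{n+m}$. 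The step I expect to require the most care is identifying the base Poisson structure of the local Poisson groupoid $\K_{\bbfu, \bbfv}$ with $\pi_n \times_{(\rho_\bfu, \lambda_\bfv)} \pi_m$: by Theorem \ref{main-thm-gpoid}, the base of $\G^{\bbfu, \bbfu} \times_\ssL \G^{\bbfv, \bbfv}$ carries the mixed product Poisson structure associated with the $B_-$- and $B$-actions on $\O^\bfu$ and $\O^\bfv$ obtained by restricting $\lhd_\sB$ and $\lhd_{\sB_-}$ to the identity bisections, and a short computation with the formulas of Proposition \ref{pro-lhd-BB_-} applied to $\gamma = (e, u, u, e) \in \Gamma_\sB$ and $\gamma_- = (g, e, e, g) \in \Gamma_{\sB_-}$ identifies these restricted actions with $\rho_\bfu$ and $\lambda_\bfv$ respectively; the $T$-quotient $\varpi_{\scriptscriptstyle \K_{\bbfu, \bbfv}}$ acts trivially on the base, so the base Poisson structure is preserved. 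The corollary then follows from the commutativity of the square above.
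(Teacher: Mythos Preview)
Your proposal is correct and follows essentially the same route as the paper's own proof: both arguments use that $\kappa_{\bbfu, \bbfv}$ is a Poisson isomorphism of local Poisson groupoids (Propositions \ref{pro-Kuv-gpoid} and \ref{pro-kappa_uv}) and then read off $I_{\bfu, \bfv}$ as the induced map on bases. Your version simply unpacks this last step explicitly via the commuting square with the source maps, together with the identification of the base Poisson structure of $\K_{\bbfu, \bbfv}$ as $\pi_n \times_{(\rho_\bfu, \lambda_\bfv)} \pi_m$ using Theorem \ref{main-thm-gpoid}.
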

\begin{proof}
By Proposition \ref{pro-Kuv-gpoid} and \ref{pro-kappa_uv} $((\G^{\bar{\bfw}, \bar{\bfw}})_0 \rightrightarrows \O^{\bfw}, \pi_{\bbfw, \bbfw})$ is a local Poisson groupoid over $(\O^{\bfw}, \pi_{n+m})$ and $\kappa_{\bbfu, \bbfv}: (\K_{\bbfu, \bbfv} \rightrightarrows \O^{\bfu} \times \O^{\bfv}, \pi_{\scriptscriptstyle \K_{\bbfu, \bbfv}}) \cong ((\G^{\bar{\bfw}, \bar{\bfw}})_0 \rightrightarrows \O^{\bfw},\pi_{\bbfw, \bbfw})$ is an isomorphism of local Poisson groupoids. Then $I_{\bfu, \bfv}$ is precisely the map between the bases of the two local groupoids covered by the isomorphism $\kappa_{\bbfu, \bbfv}$. 
\end{proof}

\subsection{The Poisson groupoid $(\G^{\bbfw, \bbfw}, \: \pi_{\bbfw, \bbfw})$}

\begin{thm} \label{thm-main-Guu}
Let $l \geq 1$, $\bfw \in W^l$, and let $\bbfw \in N_\sG(T)^l$ be a of representative of $\bfw$. Then $(\G^{\bbfw, \bbfw} \rightrightarrows \O^{\bfw}, \: \pi_{\bbfw, \bbfw})$ is a Poisson groupoid over $(\O^{\bfw}, \pi_l)$. 
\end{thm}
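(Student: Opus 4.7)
The plan is to argue by strong induction on $l$. The base case $l=1$ follows from the main theorem of \cite{Lu-Mou:double-B-cell}: when $\bfw = (w_1)$, the groupoid $(\G^{\bbfw, \bbfw}, \pi_{\bbfw, \bbfw})$ is canonically isomorphic via $J_{\bbfw, \bbfw}$ to the double Bruhat cell $(G^{w_1, w_1}, \pist)$ with its groupoid structure over $\O^{w_1}$. For $l \geq 2$, I decompose $\bfw = (\bfu, \bfv)$ with $\bfu \in W^n$, $\bfv \in W^m$, $n, m \geq 1$ and $n + m = l$, and correspondingly $\bbfw = (\bbfu, \bbfv)$; by the induction hypothesis, both $(\G^{\bbfu, \bbfu}, \pi_{\bbfu, \bbfu})$ and $(\G^{\bbfv, \bbfv}, \pi_{\bbfv, \bbfv})$ are Poisson groupoids.

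The next step is to apply Theorem \ref{main-thm-gpoid} to the pair of dual Poisson Lie groups $((B, \pist), (B_-, \pist))$, with $\mu_\ssY = \mu_+ : (\G^{\bbfu, \bbfu}, \pi_{\bbfu, \bbfu}) \to (B, \pist)$, $\mu_\ssZ = \mu_- : (\G^{\bbfv, \bbfv}, \pi_{\bbfv, \bbfv}) \to (B_-, -\pist)$, and the Poisson actions $\lhd_\sB$ of $(\Gamma_\sB, -\pi_\sGam)$ and $\lhd_{\sB_-}$ of $(\Gamma_{\sB_-}, \pi_\sGam)$ supplied by Theorem \ref{thm-lhd-BB_-}. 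The required hypotheses are straightforward: direct inspection of the groupoid laws in $\S$\ref{subsec-gpoid-Guu} shows that $\mu_+, \mu_-$ are Poisson groupoid morphisms; the explicit formulas for $\varrho_{\scriptscriptstyle \G^{\bbfu, \bbfu}}$ and $\vartheta_{\scriptscriptstyle \G^{\bbfv, \bbfv}}$ preceding Proposition \ref{pro-lhd-BB_-} imply that the corresponding dressing actions restrict on the identity bisections to $\rho_\bfu$ and $\lambda_\bfv$ respectively; and the twisted multiplicativity conditions \eqref{eq-Y-twist}--\eqref{eq-Z-twist} are precisely the content of Proposition \ref{pro-lhd-BB_-}. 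Theorem \ref{main-thm-gpoid} then produces a local Poisson groupoid $(\G^{\bbfu, \bbfu} \times_\ssL \G^{\bbfv, \bbfv}, \pi_{\bbfu, \bbfu} \times \pi_{\bbfv, \bbfv})$ over $(\O^\bfu \times \O^\bfv, \pi_n \times_{(\rho_\bfu, \lambda_\bfv)} \pi_m)$, which by Proposition \ref{pro-Kuv-gpoid} descends via the Poisson $T$-quotient to the local Poisson groupoid $(\K_{\bbfu, \bbfv}, \pi_{\scriptscriptstyle \K_{\bbfu, \bbfv}})$ over the same base. By Proposition \ref{pro-kappa_uv} and its corollary, the Poisson isomorphism $\kappa_{\bbfu, \bbfv}$ identifies this with the local Poisson groupoid $\bigl((\G^{\bbfw, \bbfw})_0, \pi_{\bbfw, \bbfw}|_{(\G^{\bbfw, \bbfw})_0}\bigr)$ obtained by restricting the global Lie groupoid structure on $\G^{\bbfw, \bbfw}$ from Proposition \ref{lem-theta-submersion}.

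It remains to promote this local statement to coisotropicity of the graph $Gr \subset (\G^{\bbfw, \bbfw})^3$ of the global multiplication $m_{\bbfw, \bbfw}$ with respect to $\pi_{\bbfw, \bbfw} \times \pi_{\bbfw, \bbfw} \times (-\pi_{\bbfw, \bbfw})$. The previous step gives coisotropicity at every point of the open subset $Gr \cap \bigl((\G^{\bbfw, \bbfw})_0\bigr)^3$. Since $\G^{\bbfw, \bbfw}$ is a smooth irreducible variety (a single $T$-orbit of symplectic leaves of $(\tilde{F}_{l,l}, \tilde{\pi}_{l,l})$) and $(\G^{\bbfw, \bbfw})_0$ is a non-empty Zariski open subset containing the identity bisection, it is dense; combined with the fact that $m_{\bbfw, \bbfw}$ is a holomorphic submersion (a standard property of the multiplication of a Lie groupoid), the subset $Gr \cap \bigl((\G^{\bbfw, \bbfw})_0\bigr)^3$ is open and dense in the irreducible submanifold $Gr \cong (\G^{\bbfw, \bbfw})^{(2)}$. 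Coisotropicity is a closed holomorphic condition, so it extends from this dense open subset to all of $Gr$ by analytic continuation, completing the induction. I expect this last density/continuity step to be the main technical point: although conceptually clear, it requires careful verification of the irreducibility of $(\G^{\bbfw, \bbfw})^{(2)}$ and the precise formulation of coisotropicity as the vanishing of a holomorphic tensor on $Gr$.
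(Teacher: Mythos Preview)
Your proposal is correct and follows essentially the same approach as the paper: induction on $l$ with base case from \cite{Lu-Mou:double-B-cell}, application of Theorem \ref{main-thm-gpoid} and Propositions \ref{pro-Kuv-gpoid}--\ref{pro-kappa_uv} to obtain a local Poisson groupoid structure on the Zariski open $(\G^{\bbfw,\bbfw})_0$, and then extension by density to the full graph. The paper's proof is slightly terser on the density step (it simply notes that $(\G^{\bbfw,\bbfw})_0^{(2)}$ is open dense in $(\G^{\bbfw,\bbfw})^{(2)}$ because $(\G^{\bbfw,\bbfw})_0$ is Zariski open in the irreducible $\G^{\bbfw,\bbfw}$), but your anticipated concern about irreducibility of $(\G^{\bbfw,\bbfw})^{(2)}$ and the closed nature of the coisotropicity condition is exactly the content of that step.
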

\begin{proof}
By \cite{Lu-Mou:double-B-cell} the Theorem is true for $n = 1$. By induction, one can assume that $\bfw = (\bfu, \bfv)$, where $\bfu \in W^n$ and $\bfv \in W^m$, and such that the Theorem holds for $\bfu$ and $\bfv$. Then by Propositions \ref{pro-Kuv-gpoid} and \ref{pro-kappa_uv}, $((\G^{\bbfw, \bbfw})_0 \rightrightarrows \O^{\bfw}, \: \pi_{\bbfw, \bbfw})$ is a local Poisson groupoid, that is 
$$
Gr((\G^{\bar{\bfw}, \bar{\bfw}})_0) = \{ (\tilde{x}_1, \: \tilde{x}_2, \: \tilde{x}_1 \tilde{x}_2) : \hs (\tilde{x}_1, \tilde{x}_2) \in (\G^{\bar{\bfw}, \bar{\bfw}})_0^{(2)} \}
$$
is a coisotropic submanifold of $(\G^{\bbfw, \bbfw})^3$, equipped with the Poisson structure $\pi_{\bbfw, \bbfw} \times \pi_{\bbfw, \bbfw} \times (-\pi_{\bbfw, \bbfw})$. But as $(\G^{\bar{\bfw}, \bar{\bfw}})_0$ is Zariski open in the irreducible algebraic variety $\G^{\bar{\bfw}, \bar{\bfw}}$, $(\G^{\bar{\bfw}, \bar{\bfw}})_0^{(2)}$ is open and dense in $(\G^{\bbfw, \bbfw})^{(2)}$, thus $Gr((\G^{\bar{\bfw}, \bar{\bfw}})_0)$ is open and dense in 
$$
Gr(\G^{\bar{\bfw}, \bar{\bfw}}) = \{ (\tilde{x}_1, \: \tilde{x}_2, \: \tilde{x}_1 \tilde{x}_2) : \hs (\tilde{x}_1, \tilde{x}_2) \in (\G^{\bar{\bfw}, \bar{\bfw}})^{(2)} \}. 
$$
Hence $Gr(\G^{\bar{\bfw}, \bar{\bfw}})$ is coisotropic for the Poisson structure $\pi_{\bbfw, \bbfw} \times \pi_{\bbfw, \bbfw} \times (-\pi_{\bbfw, \bbfw})$, that is $(\G^{\bbfw, \bbfw} \rightrightarrows \O^{\bfw}, \: \pi_{\bbfw, \bbfw})$ is a Poisson groupoid. 
\end{proof}

%%%%%%%%%%%%%%%%%%%%%%%%%%%%%%%%%%%%%%%%%%%%

\bibliographystyle{plain} % bibliography

\end{document}